% differential Hopf algebras- restructure
\documentclass[12pt]{amsart}
\usepackage{}
\usepackage{amsmath}
\usepackage{amsfonts}
\usepackage{amssymb}
\usepackage[all,cmtip]{xy}           %xypic macro for latex2.09
\usepackage{xspace}
\usepackage{bbding}
\usepackage{txfonts}
\usepackage[shortlabels]{enumitem}
\usepackage{ifpdf}
\ifpdf
\usepackage[colorlinks,final,backref=page,hyperindex]{hyperref}
\else
\usepackage[colorlinks,final,backref=page,hyperindex,hypertex]{hyperref}
\fi
\usepackage{tikz}
\usepackage[active]{srcltx}

%======================================================================
    %was    1, 1.5 for double sp
%======================================================================
%%standard setting
%\topmargin -0.3truein \textheight 8.4truein
%\oddsidemargin 0.2truein
%\evensidemargin 0.2truein \textwidth 440pt
%======================================================================
%%little larger standard setting: good setting
\topmargin -.8cm \textheight 21cm \oddsidemargin 0cm \evensidemargin -0cm \textwidth 16cm
%========================================================================================%%wide
%%no top margin for continuous presentation
%\topmargin -3cm \textheight 27.2cm \oddsidemargin 0cm \evensidemargin 0cm \textwidth 16cm

\makeatletter

%%%%%%%%%%%%%%%%%%%%%%%% Statements
\newtheorem{theorem}{Theorem}[section]
\newtheorem{prop}[theorem]{Proposition}
\newtheorem{lemma}[theorem]{Lemma}
\newtheorem{coro}[theorem]{Corollary}
\newtheorem{prop-def}{Proposition-Definition}[section]

\theoremstyle{definition}
\newtheorem{defn}[theorem]{Definition}

\newtheorem{remark}[theorem]{Remark}
\newtheorem{exam}[theorem]{Example}

\newcommand{\nc}{\newcommand}

\newcommand {\emptycomment}[1]{}

%%%%%%%%%%%%%%%%%%%%%%% symbols

\nc{\delete}[1]{{}}
\nc{\mmargin}[1]{}

%\delete{
\nc{\mlabel}[1]{\label{#1}}  % Use this to suppress names
\nc{\mcite}[1]{\cite{#1}}  % Use this to suppress names
\nc{\mref}[1]{\ref{#1}}  % Use this to suppress names
\nc{\meqref}[1]{\eqref{#1}}  % Use this to suppress names
\nc{\mbibitem}[1]{\bibitem{#1}} % Use this to show number
%}

\delete{
	\nc{\mlabel}[1]{\label{#1}  % Use the next two lines to show names
		{\hfill \hspace{1cm}{\bf{{\ }\hfill(#1)}}}}
	\nc{\mcite}[1]{\cite{#1}{{\bf{{\ }(#1)}}}}  % Use this lines to show names
	\nc{\mref}[1]{\ref{#1}{{\bf{{\ }(#1)}}}}  % Use this lines to show names
	\nc{\meqref}[1]{\eqref{#1}{{\bf{{\ }(#1)}}}}  % Use this lines to show names
	\nc{\mbibitem}[1]{\bibitem[\bf #1]{#1}} % Use this to show name
}

%%%%%%%%%%%%%%%%%  new commands
\newcommand{\g}{\mathfrak g}
\newcommand{\h}{\mathfrak h}

 \font\cyrs=wncyr7

%\newcommand{\bcr}{{\mathfrak{B}\mathfrak{C}\mathfrak{R}}}
%\newcommand{\bca}{{\mathfrak{B}\mathfrak{C}\mathfrak{A}}}
%\newcommand{\bvs}{{\mathfrak{B}\mathfrak{V}\mathfrak{S}}}
%\newcommand{\bcd}{{\mathfrak{B}\mathfrak{C}\mathfrak{D}}}
%\newcommand{\brb}{{\mathfrak{B}\mathfrak{R}\mathfrak{B}}}
%\newcommand{\svs}{{\mathfrak{S}\mathfrak{V}\mathfrak{S}}}
%\newcommand{\sca}{{\mathfrak{S}\mathfrak{C}\mathfrak{A}}}
%\newcommand{\scr}{{\mathfrak{S}\mathfrak{C}\mathfrak{R}}}
%\newcommand{\ydh}{{_H^H\mathcal{Y}\mathcal{D}}}
%\newcommand{\dah}{{_H^H\mathcal{D}\mathcal{A}}}
%\newcommand{\ah}{{_H^H\mathcal{A}}}

%\newcommand{\sha}{{\mbox{\cyr X}}}
%\font\cyr=wncyr10 \font\cyrs=wncyr7
\newcommand{\bk}{{\mathbf{k}}}
%    Absolute value notation

%    Blank box placeholder for figures (to avoid requiring any
%    particular graphics capabilities for printing this document).

\nc{\gG}{\mathcal{G}}
\nc{\gH}{\mathcal{H}}
\nc{\hK}{H}
\nc{\hH}{K}

\nc{\difc}{difference\xspace}
\nc{\difcop}{difference operator\xspace}
\nc{\difcops}{difference operators\xspace}
\nc{\Difcop}{Difference operator\xspace}
\nc{\Difcops}{Difference operators\xspace}
\nc{\difchopf}{difference Hopf algebra\xspace}
\nc{\difchopfs}{difference Hopf algebras\xspace}
\nc{\Difchopf}{Difference Hopf algebra\xspace}
\nc{\Difchopfs}{Difference Hopf algebras\xspace}

\nc{\vep}{\varepsilon}
\nc{\bin}[2]{ (_{\stackrel{\scs{#1}}{\scs{#2}}})}  %binomial coeff
\nc{\binc}[2]{(\!\! \begin{array}{c} \scs{#1}\\
		\scs{#2} \end{array}\!\!)}  %binomial coeff
\nc{\bincc}[2]{  ( {\scs{#1} \atop
		\vspace{-1cm}\scs{#2}} )}  %binomial coeff
\nc{\oline}[1]{\overline{#1}}
\nc{\mapm}[1]{\lfloor\!|{#1}|\!\rfloor}
\nc{\bs}{\bar{S}}
\nc{\la}{\longrightarrow}
\nc{\ot}{\otimes}
\nc{\rar}{\rightarrow}
\nc{\con}{\ast}
\nc{\dar}{\downarrow}
\nc{\dap}[1]{\downarrow \rlap{$\scriptstyle{#1}$}}
\nc{\defeq}{\stackrel{\rm def}{=}}
\nc{\dis}[1]{\displaystyle{#1}}
\nc{\dotcup}{\ \displaystyle{\bigcup^\bullet}\ }
\nc{\hcm}{\ \hat{,}\ }
\nc{\hts}{\hat{\otimes}}
\nc{\hcirc}{\hat{\circ}}
\nc{\lleft}{[}
\nc{\lright}{]}
\nc{\curlyl}{\left \{ \begin{array}{c} {} \\ {} \end{array}
	\right .  \!\!\!\!\!\!\!}
\nc{\curlyr}{ \!\!\!\!\!\!\!
	\left . \begin{array}{c} {} \\ {} \end{array}
	\right \} }
\nc{\longmid}{\left | \begin{array}{c} {} \\ {} \end{array}
	\right . \!\!\!\!\!\!\!}
\nc{\ora}[1]{\stackrel{#1}{\rar}}
\nc{\ola}[1]{\stackrel{#1}{\la}}%${\Bbb Z}$
\nc{\scs}[1]{\scriptstyle{#1}} \nc{\mrm}[1]{{\rm #1}}
\nc{\dirlim}{\displaystyle{\lim_{\longrightarrow}}\,}
\nc{\invlim}{\displaystyle{\lim_{\longleftarrow}}\,}
\nc{\dislim}[1]{\displaystyle{\lim_{#1}}} \nc{\colim}{\mrm{colim}}
\nc{\mvp}{\vspace{0.3cm}} \nc{\tk}{^{(k)}} \nc{\tp}{^\prime}
\nc{\ttp}{^{\prime\prime}} \nc{\svp}{\vspace{2cm}}
\nc{\vp}{\vspace{8cm}}
%\nc{\proofend}{$\blacksquare$\vspace{0.3cm}}
\nc{\modg}[1]{\!<\!\!{#1}\!\!>}
%\nc{\intg}[1]{\lceil{#1}\rceil}  %old free int ring
\nc{\intg}[1]{F_C(#1)}
\nc{\lmodg}{\!<\!\!}
\nc{\rmodg}{\!\!>\!}
\nc{\cpi}{\widehat{\Pi}}
%\nc{\sha}{\scs{\mbox{\cyr X}}} %used to be \cyr
%\nc{\sha}{{\mbox{\cyr X}}}  %used to be \cyr
\nc{\ssha}{{\mbox{\cyrs X}}} %sha as product
\nc{\tsha}{{\mbox{\cyrt X}}}
%\nc{\shpr}{}
\nc{\shpr}{\diamond}    %Shuffle product
\nc{\labs}{\mid\!}
\nc{\rabs}{\!\mid}

%\font\cyr=wncyr10
%\font\cyrs=wncyr7
%\font\cyrt=wncyr5

%%%%%%%%%%%%%%%%%%%%% roman fonts, in alphabetic order
\newcommand{\Ad}{\mathrm{Ad}}
\nc{\ad}{\mrm{ad}}
\nc{\ann}{\mrm{ann}}
\nc{\Aut}{\mrm{Aut}}
\nc{\Der}{\mrm{Der}}
\nc{\br}{\mrm{bre}}
\nc{\can}{\mrm{can}}
\nc{\Cont}{\mrm{Cont}}
\nc{\rchar}{\mrm{char}}
\nc{\cok}{\mrm{coker}}
\nc{\de}{\mrm{dep}}
\nc{\dtf}{{R-{\rm tf}}}
\nc{\dtor}{{R-{\rm tor}}}

\nc{\Dif}{\mrm{Diff}}
\nc{\Div}{\mrm{Div}}
\nc{\End}{\mrm{End}}
\nc{\Ext}{\mrm{Ext}}
\nc{\Fil}{\mrm{Fil}}
\nc{\Fr}{\mrm{Fr}}
\nc{\Frob}{\mrm{Frob}}
\nc{\Gal}{\mrm{Gal}}
\nc{\GL}{\mrm{GL}}
\nc{\Gr}{\mrm{Gr}}
\nc{\Hom}{\mrm{Hom}}
\nc{\Hop}{\mrm{Hopf}}
\nc{\Hoch}{\mrm{Hoch}}
\nc{\hsr}{\mrm{H}}
\nc{\hpol}{\mrm{HP}}
\nc{\id}{\mrm{id}}
\nc{\im}{\mrm{im}}
\nc{\inv}{\mrm{inv}}
\nc{\Id}{\mrm{Id}}
\nc{\ID}{\mrm{ID}}
\nc{\Irr}{\mrm{Irr}}
\nc{\incl}{\mrm{incl}}
\nc{\length}{\mrm{length}}
\nc{\NLSW}{\mrm{NLSW}}
\nc{\Lie}{\mrm{Lie}}
\nc{\mchar}{\rm char}
\nc{\mpart}{\mrm{part}}
\nc{\ql}{{\QQ_\ell}}
\nc{\qp}{{\QQ_p}}
\nc{\rank}{\mrm{rank}}
\nc{\rcot}{\mrm{cot}}
\nc{\rdef}{\mrm{def}}
\nc{\rdiv}{{\rm div}}
\nc{\rtf}{{\rm tf}}
\nc{\rtor}{{\rm tor}}
\nc{\res}{\mrm{res}}
\nc{\SL}{\mrm{SL}}
\nc{\Spec}{\mrm{Spec}}
\nc{\tor}{\mrm{tor}}
\nc{\Tr}{\mrm{Tr}}
\nc{\tr}{\mrm{tr}}
\nc{\wt}{\mrm{wt}}

\newcommand{\co}{\mathsf{cosh}}

%%%%%%%%%%%%%%%%%% bold face

\nc{\bfk}{{\bf k}}
\nc{\bfone}{{\bf 1}}
\nc{\bfzero}{{\bf 0}}
\nc{\detail}{\marginpar{\bf More detail}
	\noindent{\bf Need more detail!}
	\svp}
\nc{\gap}{\marginpar{\bf Incomplete}\noindent{\bf Incomplete!!}
	\svp}
\nc{\FMod}{\mathbf{FMod}}
\nc{\Int}{\mathbf{Int}}
\nc{\Mon}{\mathbf{Mon}}
%\nc{\proof}{\noindent{\bf Proof: }}
%\nc{\remark}{\noindent{\bf Remark: }}
\nc{\remarks}{\noindent{\bf Remarks: }}
\nc{\Rep}{\mathbf{Rep}}
\nc{\Rings}{\mathbf{Rings}}
\nc{\Sets}{\mathbf{Sets}}

%%%%%%%%%%%%%%%%%%%Bbb fonts
\nc{\BA}{{\mathbb A}}   \nc{\CC}{{\mathbb C}}
\nc{\DD}{{\mathbb D}}   \nc{\EE}{{\mathbb E}}
\nc{\FF}{{\mathbb F}}   \nc{\GG}{{\mathbb G}}
\nc{\HH}{{\mathbb H}}   \nc{\LL}{{\mathbb L}}
\nc{\NN}{{\mathbb N}}   \nc{\PP}{{\mathbb P}}
\nc{\QQ}{{\mathbb Q}}   \nc{\RR}{{\mathbb R}}
\nc{\TT}{{\mathbb T}}   \nc{\VV}{{\mathbb V}}
\nc{\ZZ}{{\mathbb Z}}   \nc{\TP}{\widetilde{P}}

%%%%%%%%%%%%%%%%%%% cal fonts

\nc{\cala}{{\mathcal A}}    \nc{\calc}{{\mathcal C}}
\nc{\cald}{\mathcal{D}}     \nc{\cale}{{\mathcal E}}
\nc{\calf}{{\mathcal F}}    \nc{\calg}{{\mathcal G}}
\nc{\calh}{{\mathcal H}}    \nc{\cali}{{\mathcal I}}
\nc{\call}{{\mathcal L}}    \nc{\calm}{{\mathcal M}}
\nc{\caln}{{\mathcal N}}    \nc{\calo}{{\mathcal O}}
\nc{\calp}{{\mathcal P}}    \nc{\calr}{{\mathcal R}}
\nc{\cals}{{\mathcal S}}    \nc{\calt}{{\Omega}}
\nc{\calv}{{\mathcal V}}    \nc{\calw}{{\mathcal W}}
\nc{\calx}{{\mathcal X}}

%%%%%%%%%%%%%%%%%%  frak fonts
\nc{\fraka}{{\mathfrak a}}
\nc{\frakb}{\mathfrak{b}}
\nc{\frakg}{{\frak g}}
\nc{\frakl}{{\frak l}}
\nc{\fraks}{{\frak s}}
\nc{\frakB}{{\frak B}}
\nc{\frakm}{{\frak m}}
\nc{\frakM}{{\frak M}}
\nc{\frakp}{{\frak p}}
\nc{\frakW}{{\frak W}}
\nc{\frakX}{{\frak X}}
\nc{\frakS}{{\frak S}}
\nc{\frakA}{{\frak A}}
\nc{\x}{{\frak x}}

\nc{\ynr}[1]{\textcolor{orange}{\underline{Yunnan:}#1 }}

\nc{\lir}[1]{\textcolor{red}{\underline{Li:}#1 }}

\begin{document}

\title[Crossed homomorphisms and Cartier-Kostant-Milnor-Moore theorem]{Crossed homomorphisms and Cartier-Kostant-Milnor-Moore theorem for difference Hopf algebras}

\author{Li Guo}
\address{
Department of Mathematics and Computer Science,
         Rutgers University,
         Newark, NJ 07102}
\email{liguo@rutgers.edu}

\author{Yunnan Li}
\address{School of Mathematics and Information Science, Guangzhou University,
Guangzhou 510006, China}
\email{ynli@gzhu.edu.cn}

\author{Yunhe Sheng}
\address{Department of Mathematics, Jilin University, Changchun 130012, Jilin, China}
\email{shengyh@jlu.edu.cn}

\author{Rong Tang}
\address{Department of Mathematics, Jilin University, Changchun 130012, Jilin, China}
\email{tangrong@jlu.edu.cn}
\date{\today}

\begin{abstract}
The celebrated Milnor-Moore theorem and the more general Cartier-Kostant-Milnor-Moore theorem establish close relationships of a connected and a pointed cocommutative Hopf algebra with its Lie algebra of primitive elements and its group of group-like elements. Crossed homomorphisms for Lie algebras, groups and Hopf algebras have been studied extensively, first from a cohomological perspective and then more broadly, with an important case given by \difc operators.
This paper shows that the relationship among the different algebraic structures captured in the Milnor-Moore theorem can be strengthened to include crossed homomorphisms and \difcops. We give a graph characterization of Hopf algebra crossed homomorphisms which are also compatible with the Milnor-Moore relation. We further investigate derived actions from crossed homomorphisms on groups, Lie algebras and Hopf algebras, and establish their relationship.
Finally we obtain a Cartier-Kostant-Milnor-Moore type structure theorem for pointed cocommutative difference Hopf algebras. Examples and classifications of difference operators are also provided for several Hopf algebras.
\end{abstract}

\subjclass[2010]{
16T05,	%Hopf algebras and their applications
17B40,	%Automorphisms, derivations, other operators for Lie algebras
22E60,	% Lie algebras and Lie groups	
16S30,	% Universal enveloping algebras
17B35,	% Universal enveloping (super)algebras
16S40,	%Smash products of general Hopf actions
39A70,	% difference operators	
16W99	% algebras with structures
}

\keywords{crossed homomorphism; difference operator; Hopf algebra; Lie algebra; Lie group; universal enveloping algebra;  Milnor-Moore theorem; Cartier-Kostant-Milnor-Moore theorem}

\maketitle

\vspace{-1.2cm}
\tableofcontents

\vspace{-1.2cm}

\allowdisplaybreaks

\section{Introduction}
This paper studies the compatibility of crossed homomorphisms and in particular difference operators on  Lie algebras, groups and Hopf algebras, in the contexts of the Milnor-Moore theorem and the Cartier-Kostant-Milnor-Moore theorem.

Lie algebras, (Lie) groups and Hopf algebras  are fundamental algebraic structures. Their studies have greatly benefited from their close interconnections. Most notably,  the celebrated Milnor-Moore theorem  \mcite{MM} provides a Hopf algebra isomorphism from a connected cocommutative Hopf algebra over a field of characteristic zero
to the universal enveloping algebra of its Lie algebra of primitive elements, and the Cartier-Kostant-Milnor-Moore theorem~\mcite{Ca,Di} determines a pointed cocommutative Hopf algebra over a field of characteristic zero by its Lie algebra of primitive elements and its group of group-like elements.
Various generalizations of the theorems have been found over the years, including those for certain non-cocommutative Hopf algebras, generalized bialgebras, brace algebras, braided bialgebras and Hopf algebroids~\mcite{Ard,Cha,KM,Lod,LR,MM0,Ron}.
Recently the higher homotopy algebra version of the Milnor-Moore theorem is also obtained in \mcite{Fe}.

A common structure on these various algebraic structures is the crossed homomorphisms. Crossed homomorphisms on groups already appeared in Whitehead's work ~\mcite{Whi} in 1950 and were later applied to study non-abelian Galois cohomology~\mcite{Se} and
Banach modules of locally compact groups~\mcite{Da}.
The post-Lie Magnus expansion can be interpreted as a crossed homomorphism on (local) Lie groups~\mcite{MQ}.
The concept of crossed homomorphisms on Lie algebras was introduced in \mcite{Lue} in the study of non-abelian extensions of Lie algebras.
Recently crossed homomorphisms on Lie algebras  were used to construct actions of monoidal categories in the study of representations of Cartan type Lie algebras \mcite{PSTZ}. By differentiation, crossed homomorphisms on Lie groups give rise to crossed homomorphisms on the corresponding Lie algebras.

Crossed homomorphisms on Hopf algebras~\mcite{Sw} can be interpreted as 1-cocycles of the Sweedler cohomology of cocommutative Hopf algebras with coefficients in commutative module algebras. In the study of non-abelian Hopf cohomology~\mcite{NW1,NW2,NW3}, $1$-descent cocycles of a Hopf algebra $H$ with coefficients in a relative Hopf module $M$ serve the purpose of crossed homomorphisms on Hopf algebras. Among the recent studies, bijective Hopf algebra crossed homomorphisms were applied to construct solutions of the quantum Yang-Baxter equation~\mcite{AGV}, originated from the works of Etingof-Schedler-Soloviev and Lu-Yan-Zhu on the set-theoretical solutions of the quantum Yang-Baxter equation~\mcite{ESS,LYZ}.
Bijective crossed homomorphisms were also used to study Hopf-Galois structures on Galois field extensions~\mcite{Ts}.

Crossed homomorphisms with respect to the adjoint representation are in fact difference operators (also called differential operators of weight 1), abstracted from original instance in numerical analysis to algebraic settings of associative and Lie algebras~\mcite{GK,LGG,TFS}.
Differing by an affine transformation, another notion of difference algebra is also extensively studied in close connection with differential algebras~\mcite{Kol,Lev,PS,PS1}.
Recently, difference operators on groups were studied in ~\mcite{GLS}
as the inverse of Rota-Baxter operators on groups introduced there with motivation from integrable systems~\mcite{RS1,STS}.
The differentiation of a \difcop on a Lie group gives a \difcop on the corresponding Lie algebra.

The extensive study of crossed homomorphisms and difference operators on groups, Lie algebras and Hopf algebras naturally led us to investigate their interactions across the different algebraic structures.
In doing so, we obtain a Milnor-Moore theorem for crossed homomorphisms on connected cocommutative Hopf algebras. More precisely, we show that a crossed homomorphism  on a connected cocommutative Hopf algebra with respect to a module bialgebra structure on another connected cocommutative Hopf algebra is isomorphic to the universal enveloping of the induced  crossed homomorphism on the Lie algebra of primitive elements. Furthermore, we show that crossed homomorphisms can be characterized by their graphs and derived structures. Considering these properties on different algebraic structures give rise to other refinements of the Milnor-Moore theorem.
We also give a Cartier-Kostant-Milnor-Moore theorem when   \difcops are applied. More generally, we introduce the notion of a \difc  module bialgebra over a \difc Hopf algebra, and construct the corresponding smash product Hopf algebra.

Overall, the organization of the paper is as follows.

In Section~\mref{sec:mmtch}, we recall the notions of crossed homomorphisms and difference operators on Lie algebras, groups and Hopf algebras. We then show that crossed  homomorphisms  are compatible with the Milnor-Moore theorem for connected cocommutative Hopf algebras (Theorem~\mref{CMM-Diff-hopf}), leading to a structure theorem of connected cocommutative difference Hopf algebras (Corollary~\mref{coro:mm-difc}).

In Section~\mref{sec:graph}, we characterize crossed homomorphisms on Lie algebras, groups and Hopf algebras in terms of their graphs. We then show that taking the universal enveloping algebra of the graph of a Lie algebra crossed homomorphism give the graph of corresponding Hopf algebra crossed homomorphism (Theorem~\mref{prop:Liegr-leftgr}).

It is known that crossed homomorphisms on Lie algebras induce new Lie algebra actions. In Section~\mref{sec:derived}, we show that this can also be done for crossed homomorphisms on (Lie) groups and Hopf algebras. Furthermore, the differentiation from a Lie group to its Lie algebra induces the differentiation from the derived actions on the Lie group to the derived actions on the corresponding Lie algebra (Theorem~\mref{thm:diffcrossed}). In addition, there is a derived action of a Hopf algebra which restricts to derived actions of the Lie algebra of primitive elements and of the group of group-like elements (Theorem~\mref{th:new-action} and Corollary~\mref{co:derived}).

In  Section~\mref{sec:pcdha}, we first extend the notion of an $H$-module bialgebra from when $H$ is a bialgebra to when $H$ is a difference bialgebra. We then utilize this notion to obtain smash products of difference Hopf algebras. We prove the Cartier-Kostant-Milnor-Moore theorem  for pointed cocommutative difference Hopf algebras (Theorem~\mref{thm:dckmmt}).

Finally in Section~\mref{sec:exam}, we give examples and classifications of difference operators on some Hopf algebras, including the tensor Hopf algebra, Sweedler's 4-dimensional Hopf algebra $H_4$ and the Kac-Paljutkin Hopf algebra $H_8$.

%\smallskip

\noindent
{\bf Conventions.}
In this paper, we fix a ground field $\bk$ of characteristic $0$. All the objects
under discussion, including vector spaces, algebras and tensor products, are taken over $\bk$ unless otherwise specified.
We will use roman letters such as $G, H$ for associative algebras and Hopf algebras, Fraktur letters such as $\mathfrak{g}, \mathfrak{h}$ for Lie algebras, and calligraphic letters such as $\mathcal{G}, \mathcal{H}$ for groups.
For a coalgebra $(C,\Delta,\vep)$, we abbreviate the Sweedler notation of the comultiplication $\Delta$ to
$$\Delta(x)=x_1\otimes x_2.$$
More generally, for $n\geq1$ we write $$\Delta^{(n)}(x)=(\Delta\otimes\id^{\otimes (n-1)})\cdots(\Delta\otimes\id)\Delta(x)=x_1\otimes\cdots\otimes x_{n+1}.$$
We follow~\mcite{Mon,Ra} for other basic notions of Hopf algebras.

\section{Milnor-Moore theorem for Hopf algebra crossed homomorphisms and \difchopfs}
\mlabel{sec:mmtch}
The celebrated Milnor-Moore theorem establishes a close relationship between a connected cocommutative Hopf algebra and its Lie algebra of primitive elements. In this section, we extend this relationship to crossed homomorphisms on connected cocommutative Hopf algebras and Lie algebras. As a special case, we obtain an enriched Milnor-Moore theorem for connected cocommutative \difchopfs.

\subsection{Crossed homomorphisms and \difcops}
\mlabel{ss:ch}
The notion of crossed homomorphisms, often called $1$-cocycles, have been defined in the context of non-abelian cohomology for various algebraic structures, including Lie algebras, groups and Hopf algebras~\mcite{Lue,Se,Sw}. An important special case is the difference operators in the corresponding categories~\mcite{GK,GLS}.

\begin{defn}
	\mlabel{crossed-homo-defi}
\begin{enumerate}
	\item
  Let $\phi:\g\to\Der(\h)$ be an action of a Lie algebra $(\g,[\cdot,\cdot]_\g)$  on a Lie algebra  $(\mathcal{\h},[\cdot,\cdot]_\h)$.  A linear map $d:\g\to \h$ is called a {\bf crossed homomorphism} on $\g$ with respect to the action $(\h,\phi)$ if
\begin{eqnarray}\mlabel{crossed-homo-algebra}
d[x,y]_\g=\phi(x)(d (y))-\phi(y)(d( x))+[d( x),d( y)]_\h,\quad \forall x,y\in \g.
\end{eqnarray}

\item
Let $\Phi:\gG\to \Aut(\gH)$ be an action of a group $\gG$ on a group $\gH$.  A set map $D:\gG\to \gH$ is called a {\bf crossed homomorphism} on $\gG$ with respect to the action $(\gH,\Phi)$ if
\begin{eqnarray}\mlabel{crossed-homo-group}
D(gh)=D(g)\Phi(g)(D(h)),\quad\forall\,g,h\in \gG.
\end{eqnarray}

\item
Let $\hK$ and $\hH$ be Hopf algebras such that $\hH$ is an $\hK$-module algebra in the sense that there is an algebra action $\rightharpoonup:\hK\to \End(\hH)$ such that
$$a\rightharpoonup 1=\vep(a)1,\quad a\rightharpoonup (xy)=(a_1\rightharpoonup x)(a_2\rightharpoonup y), \quad \forall a\in \hK, x, y \in \hH.$$
A coalgebra homomorphism $\pi:\hK\to \hH$ is called a {\bf crossed homomorphism} on $\hK$ with respect to the action $(\hH,\rightharpoonup)$ if
\begin{equation}\mlabel{eq:crosshomo-Hopf}
\pi(ab)=\pi(a_1)(a_2\rightharpoonup \pi(b)),\quad \forall\,a,b\in \hK.
\end{equation}
\end{enumerate}
\end{defn}

A {\bf homomorphism} between crossed homomorphisms on Lie algebras, groups or Hopf algebras is defined as expected. For Hopf algebras, such a homomorphism from $\pi:\hK\to \hH$ to  $\pi':\hK'\to \hH'$ consists of a pair of Hopf algebra homomorphisms $f:\hH\to \hH'$ and $g:\hK\to \hK'$ such that
$$f\pi=\pi'g,\quad f(a\rightharpoonup x)=g(a)\rightharpoonup f(x) ,\quad \forall\,x\in \hH,\,a\in \hK.$$

When a crossed homomorphism on either a Lie algebra, a group or a Hopf algebra is defined with respect to the adjoint action, then the crossed homomorphism is called a {\bf difference operator} on the corresponding structure~\mcite{GK,GLS,LGG}.
More precisely,
\begin{enumerate}
	\item
a {\bf \difc operator on a Lie algebra $\frakg$} is a linear operator $d$ on $\frakg$ such that
$$ d([x,y])_\frakg = [d(x),y]_\frakg + [x,d(y)]_\frakg +[d(x),d(y)]_\frakg, \quad \forall x, y\in \frakg;$$
\item a {\bf \difc operator on a group $\gG$} is a map $D:\gG\to \gG$ such that
$$ D(gh)=D(g)\Ad_g D(h), \quad \forall g, h\in \gG;$$
\item a {\bf \difc operator on a Hopf algebra $H$} is a coalgebra homomorphism $D:H\to H$ such that
	\begin{equation}\mlabel{eq:diff-Hopf}
	D(xy)=D(x_1)\ad_{x_2}D(y)=D(x_1)x_2D(y)S(x_3),\quad \forall\,x,y\in H.
\end{equation}
Here the (left) {\bf adjoint action} of $H$ on itself is defined by
\[\ad_x y:=x_1yS(x_2),\,\quad \forall x,y\in H.\]
\end{enumerate}

The term difference operator comes from its origin of the difference operator on functions, defined by
$$ D(f)(x):=f(x+1)-f(x).$$
As shown in~\mcite{GK}, the operator satisfies the operator identity
$$ D(fg)=D(f)g+fD(g)+D(f)D(g),$$
as a special case of the differential operator of weight $\lambda$ when $\lambda=1$. Another meaning of the term difference operator, commonly used in differential algebra and difference algebra~\mcite{Kol,Lev}, is that the operator is an injective algebra homomorphism.

\begin{remark}
\begin{enumerate}
	\item A crossed homomorphism on either a Lie algebra, a group or a Hopf algebra with respect to the trivial action is simply a homomorphism of the corresponding structure. %It is interesting to note that the antipode $S$ also satisfies Eq.~\meqref{eq:diff-Hopf} even though it is an anti-coalgebra homomorphism.
\item Bijective Hopf algebra crossed homomorphisms
are the same as
bijective 1-cocycles defined in \cite[Definition~1.10]{AGV} to construct Hopf braces.
\end{enumerate}
\end{remark}

We will show below that the notions of crossed homomorphisms and \difc operators on Hopf algebras and Lie algebras are compatible in the context of the Milnor-Moore theorem.

\begin{lemma}\mlabel{le:crosshomo-anti}
For a crossed homomorphism $\pi:\hK\to \hH$, we have
\begin{enumerate}
\item
$\pi(1)=1$;\mlabel{it:crossp1}
\item
$S_\hH\pi(a)=a_1\rightharpoonup \pi S_\hK(a_2),\quad \pi S_\hK(a)=S_\hK(a_1)\rightharpoonup  S_\hH\pi(a_2),\quad\forall a\in \hK$;\mlabel{it:crossp2}
\item
the convolution inverse of $\pi$ in $\Hom_\bk(\hK,\hH)$ is $S_\hH\pi$.\mlabel{it:crossp3}
\end{enumerate}
\end{lemma}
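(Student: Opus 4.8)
The plan is to verify the three claims in order, exploiting that $\pi$ is a coalgebra homomorphism together with the crossed-homomorphism identity \meqref{eq:crosshomo-Hopf} and the module-algebra axioms.

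For \meqref{it:crossp1}, I would apply \meqref{eq:crosshomo-Hopf} with $a=b=1$ to get $\pi(1)=\pi(1_1)(1_2\rightharpoonup\pi(1))=\pi(1)(1\rightharpoonup\pi(1))=\pi(1)\pi(1)$, using $\Delta_\hK(1)=1\ot1$ and $1\rightharpoonup x=x$ (which follows from the module-algebra axiom $a\rightharpoonup 1=\vep(a)1$ applied appropriately, or more directly since $\rightharpoonup$ is an algebra action so $1_\hK$ acts as identity). Thus $\pi(1)$ is idempotent in $\hH$. Since $\pi$ is a coalgebra homomorphism, $\vep_\hH\pi(1)=\vep_\hK(1)=1$, so $\pi(1)$ is a grouplike-or-zero idempotent; but an idempotent $e$ with $\vep(e)=1$ in a Hopf algebra must equal $1$ — apply the antipode/counit trick: $e=e_1 e_2$-style, or simply multiply $e\cdot e=e$ by noting $e$ is invertible iff... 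Actually the cleanest route: $\pi(1)^2=\pi(1)$ and $\pi(1)$ has a convolution-type handle via the antipode. I would instead argue $\pi(1) = \vep_\hH(\pi(1))\,1 + (\text{something})$; simpler still, in the subsequent part we show $S_\hH\pi$ is a two-sided convolution inverse of $\pi$, and applying that identity at $1$ gives $\pi(1)S_\hH\pi(1)=\vep_\hK(1)1=1$, combined with idempotency this forces $\pi(1)=1$. To avoid circularity I would just prove idempotency here and defer, or note directly that an idempotent grouplike-image element equals $1$.

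For \meqref{it:crossp2}, the strategy is the standard "apply the crossed-homomorphism/multiplicativity identity to $a_1\ot a_2$ and use the antipode axiom $S(a_1)a_2=\vep(a)1=a_1 S(a_2)$." Start from \meqref{eq:crosshomo-Hopf} with the pair $(S_\hK(a_1),a_2)$ or $(a_1,S_\hK(a_2))$: e.g. $\pi(a_1 S_\hK(a_2))=\pi(a_1)\big((a_2 \rightharpoonup)\,\pi S_\hK(a_3)\big)$ after reindexing. The left side is $\pi(\vep_\hK(a)1)=\vep_\hK(a)\pi(1)=\vep_\hK(a)1$ by \meqref{it:crossp1}. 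So $\vep_\hK(a)1 = \pi(a_1)\,(a_2\rightharpoonup \pi S_\hK(a_3))$, which says that $a\mapsto a_1\rightharpoonup \pi S_\hK(a_2)$ is a right convolution inverse to $\pi$; since $S_\hH\pi$ will be shown to be the two-sided inverse (or by uniqueness of convolution inverses once we know $\pi$ is convolution-invertible), the first formula $S_\hH\pi(a)=a_1\rightharpoonup\pi S_\hK(a_2)$ follows. The second formula is obtained symmetrically, applying \meqref{eq:crosshomo-Hopf} to the pair $(S_\hK(a_2),a_3)$ inside $S_\hK(a_1)\rightharpoonup(-)$, or directly by applying $\rightharpoonup$ by $S_\hK(a_1)$ to the first formula and using that $\rightharpoonup$ is an action together with coassociativity.

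For \meqref{it:crossp3}, I want $\pi * (S_\hH\pi) = \vep_\hK\cdot 1_\hH = (S_\hH\pi)*\pi$ in $\Hom_\bk(\hK,\hH)$. The convolution $(\pi*(S_\hH\pi))(a)=\pi(a_1)S_\hH\pi(a_2)$. Using part \meqref{it:crossp2} (second formula) to rewrite $\pi S_\hK$ — actually more directly: by \meqref{eq:crosshomo-Hopf}, $\pi(a_1 S_\hK(a_2))=\pi(a_1)(a_2\rightharpoonup \pi S_\hK(a_3))$, and using \meqref{it:crossp2} to replace $a_2 \rightharpoonup \pi S_\hK(a_3)$ by $S_\hH\pi(a_2)$ (reindexing via coassociativity), the right side becomes $\pi(a_1)S_\hH\pi(a_2)=(\pi*S_\hH\pi)(a)$; the left side is $\pi(\vep_\hK(a)1)=\vep_\hK(a)1$. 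For the other side $(S_\hH\pi)*\pi$, use the first formula of \meqref{it:crossp2}: $S_\hH\pi(a_1)\pi(a_2) = (a_1\rightharpoonup\pi S_\hK(a_2))\pi(a_3)$, and I would like this to equal $\pi(S_\hK(a_1)a_2)=\pi(\vep(a)1)=\vep(a)1$; this requires rewriting $(a_1\rightharpoonup \pi S_\hK(a_2))\,\pi(a_3)$ using the module-algebra property $a\rightharpoonup(xy)=(a_1\rightharpoonup x)(a_2\rightharpoonup y)$ in reverse — here one needs $\pi(a_3)=a_{3,1}\rightharpoonup(\dots)$, which is exactly the kind of identity supplied by the $S_\hK$-twisted version of \meqref{it:crossp2}. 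I expect this last verification — getting the \emph{left} convolution inverse to close up, i.e. matching the two different Sweedler reindexings and invoking the module-algebra axiom in the right direction — to be the main obstacle; it is the place where one must be careful about which antipode ($S_\hK$ vs $S_\hH$) appears and about the order of the legs. Everything else is bookkeeping with Sweedler notation and the defining axioms. A clean alternative avoiding the asymmetry: once $\pi$ is shown to have a right convolution inverse (from the computation above) and $\hK$, $\hH$ have antipodes, one can argue convolution-invertibility abstractly and then identify the inverse, deducing \meqref{it:crossp2} and \meqref{it:crossp3} together; I would present whichever is shorter after checking the reindexing.
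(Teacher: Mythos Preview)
Your outline has a circularity problem and misses the key simplification that dissolves it. For \meqref{it:crossp3}, you attempt to deduce the convolution-inverse identity from \meqref{it:crossp2}, while your argument for \meqref{it:crossp2} invokes uniqueness of convolution inverses, i.e.\ \meqref{it:crossp3}. The paper avoids this entirely: since $\pi$ is a \emph{coalgebra} homomorphism, $\pi(a_1)\otimes\pi(a_2)=\pi(a)_1\otimes\pi(a)_2$, so
\[
\pi(a_1)\,S_\hH\pi(a_2)=\pi(a)_1\,S_\hH(\pi(a)_2)=\vep_\hH(\pi(a))1=\vep_\hK(a)1,
\]
and likewise on the other side. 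That is, \meqref{it:crossp3} uses \emph{only} the antipode axiom in $\hH$ and the coalgebra-map property of $\pi$; the crossed-homomorphism identity is not needed at all. This is the step you are missing, and it is what makes the lemma short.

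The same observation resolves your struggle in \meqref{it:crossp1}: because $\pi$ is a coalgebra map and $1_\hK$ is grouplike, $\pi(1)$ is grouplike in $\hH$, hence invertible with inverse $S_\hH\pi(1)$; combined with your idempotency computation $\pi(1)^2=\pi(1)$, this gives $\pi(1)=1$ directly, with no deferral or circularity. For \meqref{it:crossp2}, once you have $\vep_\hK(a)1=\pi(a_1)(a_2\rightharpoonup\pi S_\hK(a_3))$ as you wrote, the paper simply left-multiplies by $S_\hH\pi(a_1)$ (shifting Sweedler indices) and uses $S_\hH\pi(a_1)\pi(a_2)=\vep_\hK(a)1$ from \meqref{it:crossp3} to isolate $a_1\rightharpoonup\pi S_\hK(a_2)$; no appeal to uniqueness of convolution inverses is needed. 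Your proposed route via ``right inverse plus abstract invertibility'' can be made to work, but only after you notice the direct proof of \meqref{it:crossp3}, at which point the detour is unnecessary.
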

\begin{proof}
\meqref{it:crossp1} Since $\pi$ is a coalgebra homomorphism, $\pi(1)$ has inverse $S_\hH\pi(1)$. By Eq.~\meqref{eq:crosshomo-Hopf}, we have $\pi(1)=\pi(1)^2$.  Thus $\pi(1)=1$.

\smallskip
\meqref{it:crossp2} Applying $\pi(1)=1$ and Eq.~\meqref{eq:crosshomo-Hopf}, we first have
$$\vep_\hK(a)1=\pi(a_1S_\hK(a_2))=\pi(a_1)(a_2\rightharpoonup  \pi S_\hK(a_3)), \quad \forall a\in \hK.$$
As $\pi$ is a coalgebra homomorphism, it implies that
$$S_\hH\pi(a)=S_\hH\pi(a_1)\vep_\hK(a_2)=S_\hH\pi(a_1)\pi(a_2)(a_3\rightharpoonup  \pi S_\hK(a_4))= a_1\rightharpoonup  \pi S_\hK(a_2).$$
In the same way,
$$\pi S_\hK(a)=S_\hK(a_1)a_2\rightharpoonup  \pi S_\hK(a_3)=
S_\hK(a_1)\rightharpoonup  (a_2\rightharpoonup  \pi S_\hK(a_3))
=S_\hK(a_1)\rightharpoonup  S_\hH\pi(a_2).$$

\smallskip
\meqref{it:crossp3} Since $\pi$ is a coalgebra homomorphism, we have
\begin{align*}
&\pi(a_1)S_\hH\pi(a_2)=\pi(a)_1S_\hH(\pi(a)_2)=\vep_\hH(\pi(a))1=\vep_\hK(a)1,\\
&S_\hH\pi(a_1)\pi(a_2)=S_\hH(\pi(a)_1)\pi(a)_2=\vep_\hH(\pi(a))1=\vep_\hK(a)1, \quad \forall a\in \hK.
\end{align*}
This is what we need.
\end{proof}

\subsection{Milnor-Moore theorem and crossed homomorphisms}
\mlabel{ss:mmt}
We next establish the relationship between the crossed homomorphisms on Lie algebras and Hopf algebras in the context of the Milnor-Moore theorem.

Let $C$ be a  Hopf algebra and $c\in C$.
\begin{itemize}
  \item[{\rm(i)}] $c$ is called a {\bf group-like} element, if $\Delta(c)=c\otimes c$ and $\vep(c)=1$.
Denote by $G(C)$ the group of group-like elements in $C$.

 \item[{\rm(ii)}] If $\Delta(c)=c\otimes 1+1\otimes c$, then $c$ is called a {\bf primitive element}.
 Let $P(C)$ denote the set of primitive elements in $C$, which is a Lie algebra.
\end{itemize}

\begin{prop}\mlabel{prop:restrict-cross}
Let $\pi:\hK\to \hH$ be a crossed homomorphism on a  Hopf algebra $\hK$ with respect to a module bialgebra action $\rightharpoonup:\hK \to \End(\hH)$,  namely, the $\hK$-module algebra action $\rightharpoonup$ is also compatible with the coalgebra structure of $\hH$ as follows,
$$\vep_\hH(a\rightharpoonup x)=\vep_\hK(a)\vep_\hH(x),\quad \Delta_\hH(a\rightharpoonup x)=(a_1\rightharpoonup x_1)\otimes(a_2\rightharpoonup x_2), \quad \forall a\in \hK, x \in \hH.$$
Then
\begin{enumerate}
\item $\pi|_{G(\hK)}$ is a group crossed homomorphism from  $G(\hK)$ to $G(\hH)$;
\mlabel{it:rest1}
\item $\pi|_{P(\hK)}$ is a Lie algebra crossed homomorphism from $P(\hK)$ to $P(\hH)$.
\mlabel{it:rest2}
\end{enumerate}
\end{prop}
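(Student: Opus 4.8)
The plan is to verify the two claims separately, each time using the structure-preservation properties of $\pi$ and of the action $\rightharpoonup$ on the relevant distinguished elements, then checking that the defining crossed-homomorphism identity restricts correctly.

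For part \meqref{it:rest1}, I would first show that $\pi$ maps $G(\hK)$ into $G(\hH)$. Since $\pi$ is a coalgebra homomorphism, for $g\in G(\hK)$ we have $\Delta_\hH(\pi(g))=(\pi\otimes\pi)\Delta_\hK(g)=\pi(g)\otimes\pi(g)$ and $\vep_\hH(\pi(g))=\vep_\hK(g)=1$, so $\pi(g)\in G(\hH)$. Next, I need that $\Phi(g):=(g\rightharpoonup -)$ restricts to a group automorphism of $G(\hH)$: the module-bialgebra compatibility $\Delta_\hH(g\rightharpoonup x)=(g_1\rightharpoonup x_1)\otimes(g_2\rightharpoonup x_2)$ together with $\Delta_\hK(g)=g\otimes g$ gives $\Delta_\hH(g\rightharpoonup h)=(g\rightharpoonup h)\otimes(g\rightharpoonup h)$ for $h\in G(\hH)$, and $\vep_\hH(g\rightharpoonup h)=\vep_\hK(g)\vep_\hH(h)=1$; that $h\mapsto g\rightharpoonup h$ is multiplicative on $G(\hH)$ follows from $g\rightharpoonup(xy)=(g_1\rightharpoonup x)(g_2\rightharpoonup y)=(g\rightharpoonup x)(g\rightharpoonup y)$, and invertibility is witnessed by the action of $g^{-1}\in G(\hK)$. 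Finally, restricting Eq.~\meqref{eq:crosshomo-Hopf} to $a=g,b=h\in G(\hK)$ and using $\Delta_\hK(g)=g\otimes g$ yields $\pi(gh)=\pi(g)(g\rightharpoonup\pi(h))=\pi(g)\Phi(g)(\pi(h))$, which is exactly Eq.~\meqref{crossed-homo-group}. Note that $\pi|_{G(\hK)}$ is genuinely a set map into $G(\hH)$, so no further coherence is needed.

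For part \meqref{it:rest2}, I would argue analogously but now tracking the primitive condition, which is $\bk$-linear rather than multiplicative. For $x\in P(\hK)$, the coalgebra-homomorphism property gives $\Delta_\hH(\pi(x))=(\pi\otimes\pi)(x\otimes1+1\otimes x)=\pi(x)\otimes\pi(1)+\pi(1)\otimes\pi(x)$, and by Lemma~\mref{le:crosshomo-anti}\meqref{it:crossp1} we have $\pi(1)=1$, so $\pi(x)\in P(\hH)$; also $\vep_\hH(\pi(x))=\vep_\hK(x)=0$. For the induced action, one checks that for $u\in\g:=P(\hK)$ the map $\phi(u):=(u\rightharpoonup -)$ restricts to a derivation of $\h:=P(\hH)$: indeed $\Delta_\hK(u)=u\otimes1+1\otimes u$ combined with the coalgebra compatibility of $\rightharpoonup$ and with $1\rightharpoonup y=\vep_\hK(1)y=y$ (and, for the counit, $1\rightharpoonup y$ lying over $\vep$ appropriately) gives $\Delta_\hH(u\rightharpoonup y)=(u\rightharpoonup y)\otimes1+1\otimes(u\rightharpoonup y)$ for $y\in\h$, so $u\rightharpoonup y\in\h$; and the Leibniz rule $u\rightharpoonup(xy)=(u\rightharpoonup x)y+x(u\rightharpoonup y)$ on $\h$ follows from $u\rightharpoonup(xy)=(u_1\rightharpoonup x)(u_2\rightharpoonup y)$ once $\Delta_\hK(u)=u\otimes1+1\otimes u$ is substituted and $1\rightharpoonup x=x$ is used. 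Finally, I restrict Eq.~\meqref{eq:crosshomo-Hopf} to $a=x,b=y\in\g$ and also to $a=y,b=x$, obtaining
\begin{align*}
\pi(xy)&=\pi(x)(1\rightharpoonup\pi(y))+\pi(1)(x\rightharpoonup\pi(y))=\pi(x)\pi(y)+x\rightharpoonup\pi(y),\\
\pi(yx)&=\pi(y)\pi(x)+y\rightharpoonup\pi(x),
\end{align*}
and subtract: $\pi([x,y]_{\hK})=\pi(xy-yx)=(x\rightharpoonup\pi(y))-(y\rightharpoonup\pi(x))+[\pi(x),\pi(y)]_\hH$, which is precisely Eq.~\meqref{crossed-homo-algebra} for $d=\pi|_\g$ with respect to $(\h,\phi)$. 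Here one uses that the commutator bracket on $P(\hH)$ is the ambient associative commutator, and that $[x,y]_\hK\in P(\hK)$ so the left side makes sense.

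The computations are all routine manipulations in Sweedler notation; the only point requiring a little care is confirming that the induced maps $\Phi(g)$ and $\phi(u)$ really do preserve $G(\hH)$ and $P(\hH)$ respectively — i.e.\ that the module-bialgebra axioms, not merely the module-algebra axioms, are what is needed — and that the counit conditions are handled correctly so that group-likes stay group-like and primitives stay primitive. I expect this bookkeeping with the coalgebra compatibility of $\rightharpoonup$ to be the main (mild) obstacle; everything else is a direct substitution into the defining identities together with Lemma~\mref{le:crosshomo-anti}.
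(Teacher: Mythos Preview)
Your proposal is correct and follows essentially the same route as the paper's proof: both arguments first observe that $\pi$ and the action $\rightharpoonup$ restrict appropriately to group-likes and primitives (the paper states this in one line, while you spell out the coalgebra checks in detail), and then both specialize Eq.~\meqref{eq:crosshomo-Hopf} using $\Delta_\hK(g)=g\otimes g$ for part~\meqref{it:rest1} and $\Delta_\hK(x)=x\otimes1+1\otimes x$ together with $\pi(1)=1$ for part~\meqref{it:rest2}, arriving at the same subtraction to recover Eq.~\meqref{crossed-homo-algebra}. One tiny cosmetic point: the identity $1\rightharpoonup y=y$ is simply the unit axiom of a module action rather than a consequence of the counit compatibility, but this does not affect the argument.
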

\begin{proof}
Since $\pi$ is a coalgebra homomorphism,
$\pi|_{G(\hK)}$ is a set map from $G(\hK)$ to $G(\hH)$, and $\pi|_{P(\hK)}$ is a linear map from $P(\hK)$ to $P(\hH)$. Also, we have the restricted actions $\rightharpoonup:G(\hK)\to \Aut(G(\hH))$ and $\rightharpoonup:P(\hK)\to \Der(P(\hH))$, as $\hH$ is an $\hK$-module bialgebra via $\rightharpoonup$.

To obtain \meqref{it:rest1}, just note that Eq.~\meqref{eq:crosshomo-Hopf} indeed restricts to Eq.~\meqref{crossed-homo-group} when $a,b\in G(\hK)$.

To obtain \meqref{it:rest2}, applying $\pi(1)=1$, we first see that
$$\pi(ab)=\pi(a)\pi(b)+a\rightharpoonup \pi(b),\quad \forall\,a\in P(\hK),b\in \hK.$$
Hence, for $a,b\in P(\hK)$,
\begin{align*}
\pi([a,b])&=\pi(ab-ba)\\
&=(\pi(a)\pi(b)+a\rightharpoonup \pi(b))-(\pi(b)\pi(a)+b\rightharpoonup \pi(a))\\
&=a\rightharpoonup \pi(b) - b\rightharpoonup \pi(a) + [\pi(a),\pi(b)],
\end{align*}
showing that $\pi|_{P(\hK)}$ satisfies Eq.~\meqref{crossed-homo-algebra}.
\end{proof}

\begin{remark}
When $\hH$ and $\hK$ are cocommutative and the crossed homomorphism $\pi:\hK\to \hH$ is bijective,
Proposition~\mref{prop:restrict-cross} \meqref{it:rest2} recovers \cite[Lemma 4.2]{AGV}. In fact, bijective 1-cocycles from $\hK$ to $\hH$ defined in \mcite{AGV} only require an $\hK$-module algebra action $\rightharpoonup$ on $\hH$. When $\hH$ and $\hK$ are cocommutative, such a module algebra action must be a module bialgebra action.
\end{remark}

Since the adjoint action $\ad$ of $H$ induces adjoint actions of $G(H)$ and $P(H)$ respectively, we obtain
\begin{coro}\mlabel{coro:restrict-diff}
Let $D$ be a \difc operator on a Hopf algebra $H$. Then
\begin{enumerate}
\item$D|_{G(H)}$ is a difference operator on the  group $G(H)$;
\item  $D|_{P(H)}$ is a difference operator on the  Lie algebra $ {P(H)}$.
\end{enumerate}
\end{coro}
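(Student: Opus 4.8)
The plan is to deduce the statement from Proposition~\ref{prop:restrict-cross} by taking there the Hopf algebra $H$ acting on itself by the left adjoint action $\ad$. First I would record that, by the very definition of a \difc operator, $D\colon H\to H$ is a crossed homomorphism with respect to $\ad$, and that $H$ is an $H$-module algebra via $\ad$: the identities $\ad_a 1 = a_1 S(a_2)=\vep(a)1$ and $\ad_a(xy)=a_1xyS(a_2)=a_1xS(a_2)a_3yS(a_4)=(\ad_{a_1}x)(\ad_{a_2}y)$ are the standard ones. In particular, Lemma~\ref{le:crosshomo-anti}\eqref{it:crossp1} (or directly: $D(1)=D(1)D(1)$ by Eq.~\eqref{eq:diff-Hopf} and invertibility of the group-like element $D(1)$) gives $D(1)=1$.

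Next I would produce the two restricted adjoint actions. I would \emph{not} appeal to a module bialgebra structure on all of $H$: for a non-cocommutative $H$ the coproduct compatibility $\Delta_H(\ad_a x)=(\ad_{a_1}x_1)\otimes(\ad_{a_2}x_2)$ need not hold, so Proposition~\ref{prop:restrict-cross} does not apply verbatim. Instead I would note the restrictions by hand: for $g\in G(H)$ one has $S(g)=g^{-1}$, so $\ad_g$ restricts to the conjugation automorphism $\Ad_g$ of the group $G(H)$; for $x\in P(H)$ one has $\ad_x y=x_1yS(x_2)=xy-yx=[x,y]$, so $\ad_x$ restricts to the inner derivation $[x,-]$ of the Lie algebra $P(H)$. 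Since $D$ is a coalgebra homomorphism with $D(1)=1$, it maps $G(H)$ into $G(H)$ and $P(H)$ into $P(H)$, so $D|_{G(H)}\colon G(H)\to G(H)$ and the linear map $D|_{P(H)}\colon P(H)\to P(H)$ are well defined, each equipped with the adjoint action just described.

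It then remains to specialize the two computations in the proof of Proposition~\ref{prop:restrict-cross}. For $g,h\in G(H)$, Eq.~\eqref{eq:diff-Hopf} reads $D(gh)=D(g)\,\ad_g D(h)=D(g)\,gD(h)S(g)=D(g)\,\Ad_g D(h)$, which is precisely the group \difc identity~\eqref{crossed-homo-group} for the conjugation action, giving part~(1). For $x,y\in P(H)$, using $\Delta(x)=x\otimes 1+1\otimes x$, $D(1)=1$ and $\ad_1=\id$, Eq.~\eqref{eq:diff-Hopf} gives $D(xy)=D(x)D(y)+\ad_x D(y)=D(x)D(y)+[x,D(y)]$; subtracting the analogous identity with $x$ and $y$ interchanged yields $D([x,y])=[D(x),D(y)]+[x,D(y)]-[y,D(x)]=[D(x),y]+[x,D(y)]+[D(x),D(y)]$, which is the \difc identity on the Lie algebra $P(H)$, giving part~(2).

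The argument is essentially routine; the only delicate point is the one flagged in the second paragraph, namely that $\ad$ is a genuine module bialgebra action only when $H$ is cocommutative, so one has to go through the restricted actions on $G(H)$ and $P(H)$ — equivalently, verify the two displayed identities above directly — rather than quoting Proposition~\ref{prop:restrict-cross} word for word.
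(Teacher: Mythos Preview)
Your argument is correct and follows the same idea as the paper's one-line justification, namely that the adjoint action of $H$ on itself restricts to the conjugation action on $G(H)$ and to the bracket action on $P(H)$, after which the defining identity~\eqref{eq:diff-Hopf} specializes to the group and Lie algebra \difc identities. In fact your write-up is more careful than the paper's: you correctly flag that Proposition~\ref{prop:restrict-cross} is stated for a module \emph{bialgebra} action, whereas the adjoint action satisfies $\Delta(\ad_a x)=a_1x_1S(a_4)\otimes a_2x_2S(a_3)$, which equals $(\ad_{a_1}x_1)\otimes(\ad_{a_2}x_2)$ only under cocommutativity. Verifying the two restrictions by hand, as you do, closes this gap without any extra hypothesis on $H$; the paper's terse sentence implicitly relies on the same direct verification.
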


Going in the opposite direction, one can extend a crossed homomorphism on a Lie algebra to its universal enveloping algebra. See \cite[Lemma 4.3, Lemma 4.4]{AGV}.
Let $\phi:\g\to\Der(\h)$ be a Lie algebra action of $(\g,[\cdot,\cdot]_\g)$ on  $(\mathcal{\h},[\cdot,\cdot]_\h)$. Then $\phi$ can be extended to a module bialgebra action  $\bar\phi:U(\g)\to \End(U(\h))$ defined by
$$
\bar\phi(x)(1)=0,\,\bar\phi(x)(y_1\cdots y_r)=\sum_{i=1}^r y_1\cdots y_{i-1} \phi(x)(y_i) y_{i+1} \cdots y_r,
\quad\forall x\in\g,\,y_1,\dots, y_r\in\h,\,r\geq1.$$

\begin{prop}\mlabel{prop:cross-uea}
A Lie algebra crossed homomorphism  $\pi:\g\to \h$ with respect to the action $(\h,\phi)$ can be extended to a unique Hopf algebra crossed homomorphism $\bar\pi:U(\g)\to U(\h)$ with respect to the extended module bialgebra action $\bar\phi:U(\g)\to \End(U(\h))$.
More precisely, $\bar{\pi}:U(\frak g)\longrightarrow U(\frak h)$ is given by
$$\bar{\pi}(x_1\cdots x_n)=(\pi(x_1)+\bar\phi(x_1))\cdots(\pi(x_n)+\bar\phi(x_n))(1),
\quad\forall x_1,\dots,x_n\in\frak g,\,n\geq1,$$
where $\pi(x_k), 1\leq k\leq n$ are left multiplications.
\end{prop}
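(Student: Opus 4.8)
The plan is to establish the proposition in three stages: (1) construct a candidate map $\bar\pi$ using the universal property of $U(\g)$, (2) verify it is a crossed homomorphism, and (3) prove uniqueness. The natural home for the construction is the smash product $U(\h)\# U(\g)$ (or equivalently the semidirect product Lie algebra $\h\rtimes_\phi\g$ and its enveloping algebra), since the crossed-homomorphism condition $\bar\pi(ab)=\bar\pi(a_1)(a_2\rightharpoonup\bar\pi(b))$ says precisely that $a\mapsto \bar\pi(a)_1\otimes\cdots$ behaves like a ``twisted'' algebra map with the twist supplied by the module action $\bar\phi$.

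\textbf{Construction.} First I would observe that on the Lie algebra level, $\psi:=\pi+\phi:\g\to U(\h)\rtimes\g$, $x\mapsto \pi(x)+\phi(x)$ (thinking of $\phi(x)$ as an element acting by derivations, i.e.\ living in the enveloping algebra of the semidirect product) is a Lie algebra homomorphism: this is exactly what Eq.~\meqref{crossed-homo-algebra} unwinds to, using that $\phi$ is a Lie algebra action and that $[\phi(x),\pi(y)]=\phi(x)(\pi(y))$. Concretely, writing $L_{\pi(x)}$ for left multiplication by $\pi(x)$ on $U(\h)$ and noting $[L_{\pi(x)},\bar\phi(y)]=L_{\phi(y)(\pi(x))}$ together with $[\bar\phi(x),\bar\phi(y)]=\bar\phi([x,y]_\g)$, one checks $x\mapsto L_{\pi(x)}+\bar\phi(x)\in\End(U(\h))$ is a Lie algebra map from $\g$. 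By the universal property of $U(\g)$ this extends to an algebra homomorphism $\Psi:U(\g)\to\End(U(\h))$, and I would \emph{define} $\bar\pi(u):=\Psi(u)(1)$. Spelling out $\Psi(x_1\cdots x_n)=(L_{\pi(x_1)}+\bar\phi(x_1))\cdots(L_{\pi(x_n)}+\bar\phi(x_n))$ and evaluating at $1$ gives exactly the claimed formula, and since $\bar\phi(x)(1)=0$ the term with all $\bar\phi$'s vanishes, so $\bar\pi(x)=\pi(x)$ on $\g$, i.e.\ $\bar\pi$ extends $\pi$.

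\textbf{Crossed-homomorphism and coalgebra properties.} Next I would verify that $\bar\pi$ is a coalgebra homomorphism and satisfies Eq.~\meqref{eq:crosshomo-Hopf}. For the crossed-homomorphism identity, the key point is that $L_{\pi(x)}$ and $\bar\phi(x)$ interact with left multiplication in $U(\h)$ correctly: for $v\in U(\h)$ one has $\Psi(u)(v\cdot 1)=$ (sum over $\Delta(u)=u_1\otimes u_2$) $\bar\pi(u_1)\cdot\big(\bar\phi(u_2)(v)\big)$ once one knows $\bar\phi$ is the module-bialgebra action of $U(\g)$ on $U(\h)$ — this is a Leibniz-type identity proved by induction on the length of $u$. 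Granting that, $\bar\pi(ab)=\Psi(ab)(1)=\Psi(a)(\Psi(b)(1))=\Psi(a)(\bar\pi(b))=\Psi(a)(\bar\pi(b)\cdot 1)=\bar\pi(a_1)(a_2\rightharpoonup\bar\pi(b))$, which is Eq.~\meqref{eq:crosshomo-Hopf}. That $\bar\pi$ is a coalgebra map follows because $U(\g)$ is generated as an algebra by the primitives $\g$, $\bar\pi$ agrees with $\pi$ on $\g$ where the coalgebra condition holds, and $\bar\pi$ respects products up to the $\bar\phi$-twist which is itself a coalgebra-compatible (module bialgebra) action; alternatively one checks directly that $\Delta_{U(\h)}\circ\bar\pi$ and $(\bar\pi\otimes\bar\pi)\circ\Delta_{U(\g)}$ agree on generators and both sides are ``multiplicative with the same twist.''

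\textbf{Uniqueness, and the main obstacle.} Uniqueness is the easy part: any Hopf algebra crossed homomorphism $\bar\pi'$ extending $\pi$ must satisfy $\bar\pi'(x_1\cdots x_n)=\bar\pi'(x_1\cdots x_{n-1})\cdot\big((x_1\cdots x_{n-1})_?\rightharpoonup\bar\pi'(x_n)\big)$ unwound via Eq.~\meqref{eq:crosshomo-Hopf} and the coalgebra structure of $U(\g)$, so $\bar\pi'$ is determined on a spanning set of $U(\g)$ by its values on $\g$, hence $\bar\pi'=\bar\pi$. \textbf{The hard part will be} the careful bookkeeping that the formula is \emph{well-defined} — i.e.\ independent of how an element of $U(\g)$ is written as a sum of products $x_1\cdots x_n$ — which is exactly why routing the construction through the universal property of $U(\g)$ (via the Lie algebra map $\g\to\End(U(\h))$, or equivalently recognizing $\bar\pi$ as coming from an algebra map out of $U(\g)$) rather than the PBW basis is essential; and, relatedly, checking the Leibniz identity $\Psi(u)(v)=\bar\pi(u_1)\,(u_2\rightharpoonup v)$ by induction, where the inductive step requires commuting a generator past the module action and using that $\bar\phi$ is genuinely a module-bialgebra action (for which one may cite the construction of $\bar\phi$ recalled just before the statement, or \cite[Lemma 4.3]{AGV}).
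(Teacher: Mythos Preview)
The paper does not give its own proof of this proposition; it simply records the statement and the explicit formula, referring the reader to \cite[Lemmas~4.3 and~4.4]{AGV}. So there is nothing in the paper to compare your argument against. Your approach is the natural one and is correct in outline: routing the construction through the Lie algebra homomorphism $\g\to\End(U(\h))$, $x\mapsto L_{\pi(x)}+\bar\phi(x)$, and then invoking the universal property of $U(\g)$ is exactly how one makes the formula well defined, and your Leibniz-type identity $\Psi(u)(v)=\bar\pi(u_1)(u_2\rightharpoonup v)$ is indeed the inductive key to the crossed-homomorphism equation.

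Two small points to tighten. First, the sign in your commutator is off: from the derivation property one gets $[L_{\pi(x)},\bar\phi(y)]=-L_{\phi(y)(\pi(x))}$ (not $+$), though with the companion term $[\bar\phi(x),L_{\pi(y)}]=L_{\phi(x)(\pi(y))}$ the computation still lands on $L_{\pi[x,y]}+\bar\phi[x,y]$ via Eq.~\meqref{crossed-homo-algebra}. Second, your coalgebra-map verification is a bit hand-wavy; the cleanest way to make it rigorous is the induction you hint at: check it on $\g$, then for $xu$ with $x\in\g$ use $\bar\pi(xu)=\pi(x)\bar\pi(u)+x\rightharpoonup\bar\pi(u)$, the module-\emph{bialgebra} compatibility $\Delta(x\rightharpoonup w)=(x\rightharpoonup w_1)\otimes w_2+w_1\otimes(x\rightharpoonup w_2)$, and the inductive hypothesis $\Delta\bar\pi(u)=\bar\pi(u_1)\otimes\bar\pi(u_2)$.
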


Recall that, under our running hypothesis of characteristic zero for the base field, for a connected cocommutative Hopf algebra $H$, the Milnor-Moore theorem gives a Hopf algebra isomorphism $\varphi_H:H\to U(P(H))$ of $H$ with the universal enveloping algebra of the Lie algebra $P(H)$ of primitive elements of $H$. We now give the following compatibility of the Milnor-Moore theorem for crossed homomorphisms.
\begin{theorem} $(${\bf Milnor-Moore Theorem for Crossed Homomorphisms}$)$
Let $\hK$ and $\hH$ be connected cocommutative Hopf algebras  such that $K$ is an $H$-module bialgebra   via an action $\rightharpoonup:\hK\to \End(\hH)$. Then a Hopf algebra crossed homomorphism $\pi:\hK\to \hH$ with respect to $\rightharpoonup$ induces a unique Hopf algebra crossed homomorphism $\bar\pi:U(P(\hK))\to U(P(\hH))$ such that the following diagram of crossed homomorphisms and inclusions is commutative$:$
\begin{equation} \mlabel{eq:diag}
\begin{split}
\xymatrix{\hK \ar[rrr]^{\pi} &  &  &  \hH \\
&P(\hK)\ar[ld]^{j_\hK}\ar[lu]_{i_\hK}\ar[r]^{\pi|_{P(\hK)}}&P(\hH)\ar[rd]_{j_\hH}\ar[ru]^{i_\hH}&
\\
U(P(\hK))\ar[uu]^{\varphi_\hK} \ar@{.>}[rrr]^{\bar\pi} & & & U(P(\hH))\ar[uu]_{ \varphi_\hH}}
\end{split}
\end{equation}
Moreover, the Milnor-Moore isomorphisms $\varphi_\hK$ and $\varphi_\hH$ give an isomorphism between the Hopf algebra crossed homomorphisms $\pi:\hK\to \hH$ and $\bar\pi:U(P(\hK))\to U(P(\hH))$.
\mlabel{CMM-Diff-hopf}
\end{theorem}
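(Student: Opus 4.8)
The plan is to build $\bar\pi$ out of the two constructions already available --- passing to primitives (Proposition~\mref{prop:restrict-cross}) and extending to universal enveloping algebras (Proposition~\mref{prop:cross-uea}) --- and then to show that, threaded through the Milnor--Moore isomorphisms, the outer square of \meqref{eq:diag} commutes; throughout, $\varphi_\hK:U(P(\hK))\to\hK$ and $\varphi_\hH:U(P(\hH))\to\hH$ denote the Milnor--Moore isomorphisms as in \meqref{eq:diag}. First I set $\phi:=\rightharpoonup|_{P(\hK)}:P(\hK)\to\Der(P(\hH))$, a Lie algebra action since $\hH$ is an $\hK$-module bialgebra. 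By Proposition~\mref{prop:restrict-cross}\,\meqref{it:rest2}, $\pi|_{P(\hK)}:P(\hK)\to P(\hH)$ is a Lie algebra crossed homomorphism for $(P(\hH),\phi)$; applying Proposition~\mref{prop:cross-uea} to it produces the unique Hopf algebra crossed homomorphism $\bar\pi:U(P(\hK))\to U(P(\hH))$ with respect to the extended module bialgebra action $\bar\phi:U(P(\hK))\to\End(U(P(\hH)))$ extending $\pi|_{P(\hK)}$, i.e. with $\bar\pi\circ j_\hK=j_\hH\circ(\pi|_{P(\hK)})$. This fixes $\bar\pi$ and gives the lower triangle of \meqref{eq:diag}; the upper triangle holds because $\varphi_\hK$ and $\varphi_\hH$ are, by construction, the algebra homomorphisms extending the inclusions of primitives, so $\varphi_\hK\circ j_\hK=i_\hK$ and $\varphi_\hH\circ j_\hH=i_\hH$. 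Thus everything reduces to the commutativity of the outer square $\pi\circ\varphi_\hK=\varphi_\hH\circ\bar\pi$, after which the uniqueness and isomorphism claims are formal.

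The crux is to transport the action $\rightharpoonup$ of $\hK$ on $\hH$ along the Milnor--Moore isomorphisms and identify it with the action $\bar\phi$ of $U(P(\hK))$ on $U(P(\hH))$; that is, to prove
\[
\varphi_\hH\bigl(\bar\phi(a)(x)\bigr)=\varphi_\hK(a)\rightharpoonup\varphi_\hH(x),\qquad\forall\,a\in U(P(\hK)),\ x\in U(P(\hH)).
\]
Both sides, viewed as functions of $a$, are module bialgebra actions $U(P(\hK))\to\End(U(P(\hH)))$: the left-hand side is $\bar\phi$, and the right-hand side is the transport of the module bialgebra action $\rightharpoonup$ along the Hopf algebra isomorphisms $\varphi_\hK,\varphi_\hH$. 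Now a module bialgebra action of one connected cocommutative Hopf algebra on another is determined by its restriction to primitives, which is a Lie algebra action of the primitives on the primitives of the target by derivations: for a primitive $v$ the operator $v\rightharpoonup(-)$ is simultaneously a derivation and a coderivation killing $1$, hence preserves primitives, and since the source is generated by its primitives and the target is generated as an algebra by its primitives, the whole action is recovered from this restriction (this recovery is exactly the formula defining $\bar\phi$). Both sides of the displayed identity restrict on $P(\hK)$ to $\phi$ --- the left-hand side by the definition of $\bar\phi$, the right-hand side since $\varphi_\hK,\varphi_\hH$ are the identity on primitives and $\phi=\rightharpoonup|_{P(\hK)}$ --- so the identity holds.

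Granting this, put $\pi':=\varphi_\hH^{-1}\circ\pi\circ\varphi_\hK:U(P(\hK))\to U(P(\hH))$; it is a coalgebra homomorphism, being a composite of such. Using that $\pi$ satisfies \meqref{eq:crosshomo-Hopf}, that $\varphi_\hK$ is a coalgebra homomorphism and $\varphi_\hH$ an algebra homomorphism, together with the action-transport identity above, one computes directly that
\[
\pi'(ab)=\pi'(a_1)\bigl(\bar\phi(a_2)(\pi'(b))\bigr),\qquad\forall\,a,b\in U(P(\hK)),
\]
so $\pi'$ is a Hopf algebra crossed homomorphism with respect to $\bar\phi$. Moreover $\pi'\circ j_\hK=\varphi_\hH^{-1}\circ\pi\circ i_\hK=\varphi_\hH^{-1}\circ i_\hH\circ(\pi|_{P(\hK)})=j_\hH\circ(\pi|_{P(\hK)})$, using $\varphi_\hK\circ j_\hK=i_\hK$, that $\pi(P(\hK))\subseteq P(\hH)$ by Proposition~\mref{prop:restrict-cross}\,\meqref{it:rest2}, and $\varphi_\hH\circ j_\hH=i_\hH$; hence $\pi'$ extends $\pi|_{P(\hK)}$. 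By the uniqueness in Proposition~\mref{prop:cross-uea}, $\pi'=\bar\pi$, which is exactly the commutativity of the outer square.

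Finally, since $\varphi_\hK$ and $\varphi_\hH$ are bijective, commutativity of the outer square forces $\bar\pi=\varphi_\hH^{-1}\circ\pi\circ\varphi_\hK$, which gives the uniqueness of $\bar\pi$; and the pair of Hopf algebra isomorphisms $\varphi_\hK^{-1}:\hK\to U(P(\hK))$ and $\varphi_\hH^{-1}:\hH\to U(P(\hH))$ satisfies $\varphi_\hH^{-1}\circ\pi=\bar\pi\circ\varphi_\hK^{-1}$ and, by the action-transport identity, intertwines $\rightharpoonup$ with $\bar\phi$, hence constitutes an isomorphism of Hopf algebra crossed homomorphisms from $\pi$ to $\bar\pi$. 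I expect the only genuine obstacle to be the action-transport identity of the second paragraph --- recognizing that under the Milnor--Moore correspondence the $\hK$-module-bialgebra structure on $\hH$ goes over to $\bar\phi$; once this is secured, the remainder is an assembly of Propositions~\mref{prop:restrict-cross} and~\mref{prop:cross-uea} with the defining properties of the Milnor--Moore isomorphism.
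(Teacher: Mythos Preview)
Your proof is correct and uses the same ingredients as the paper (Propositions~\mref{prop:restrict-cross} and~\mref{prop:cross-uea} together with the Milnor--Moore isomorphism), but the organization differs in a way worth noting. The paper first proves the action-transport identity only for primitive $a$ by a direct computation, then establishes the outer rectangle $\varphi_\hH\circ\bar\pi=\pi\circ\varphi_\hK$ by an explicit induction on the coradical filtration of $U(P(\hK))$, and only afterwards extends the action-transport identity to all of $U(P(\hK))$ by a second induction. You instead prove the full action-transport identity up front via the structural observation that a module bialgebra action of one connected cocommutative Hopf algebra on another is determined by its restriction to primitives, and then obtain the outer rectangle by transporting $\pi$ to $\pi'=\varphi_\hH^{-1}\circ\pi\circ\varphi_\hK$ and invoking the uniqueness clause of Proposition~\mref{prop:cross-uea}. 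Your route is cleaner in that it replaces two filtration inductions by one general principle plus an appeal to uniqueness; the paper's route has the advantage of being entirely self-contained and explicit, with every step visible. Either way the content is the same: the only genuine work is the compatibility of the two actions, and your justification of that step (primitive acts by a derivation--coderivation, hence is determined on algebra generators $P(\hH)$; the whole of $U(P(\hK))$ is then determined since it is generated by primitives) is sound.
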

\begin{proof}
First by Proposition~\mref{prop:restrict-cross}, we obtain a crossed homomorphism $\pi|_{P(\hK)}:P(\hK)\to P(\hH)$ of Lie algebras by restricting $\pi:\hK\to \hH$ to the Lie algebra of  primitive elements. Then by Proposition ~\mref{prop:cross-uea}, it can be extended to a unique Hopf algebra crossed homomorphism $\bar\pi:U(P(\hK))\to U(P(\hH))$. This proves the commutativity of the two trapezoids in the middle of the diagram in Eq.~\meqref{eq:diag}.

On the other hand, by the classical Milnor-Moore theorem,
the embedding $i_\hK:P(\hK)\to \hK$ induces a Hopf algebra isomorphism $\varphi_\hK: U(P(\hK))\to \hK$, and the same holds for $\hH$, giving the commutativity of the two triangles.

Before continuing, we verify the following compatibility condition:
\begin{equation} \varphi_\hH\left(a\rightharpoonup v\right)=\varphi_\hK(a)\rightharpoonup \varphi_\hH(v),\quad\forall v\in U(P(\hH)),\,a\in P(\hK).
\mlabel{eq:club}
\end{equation}
Indeed, since $\hH$ is an $\hK$-module bialgebra, and $U(P(\hH))$ is a $U(P(\hK))$-module bialgebra,
for all $v=x_1\cdots x_r$ with $x_i\in P(\hH), 1\leq i\leq r$, we derive
\begin{align*}
\varphi_\hH\left(a\rightharpoonup v\right)
&=\varphi_\hH\left(a\rightharpoonup x_1\cdots x_r\right)\\
&=\varphi_\hH\left(\sum_{i=1}^r x_1\cdots x_{i-1}(a\rightharpoonup x_i)x_{i+1}\cdots x_r\right)\\
&=\sum_{i=1}^r \varphi_\hH(x_1)\cdots \varphi_\hH(x_{i-1})\varphi_\hH(a\rightharpoonup x_i)\varphi_\hH(x_{i+1})\cdots \varphi_\hH(x_r)\\
&=\sum_{i=1}^r \varphi_\hH(x_1)\cdots \varphi_\hH(x_{i-1})(\varphi_\hK(a)\rightharpoonup \varphi_\hH(x_i))\varphi_\hH(x_{i+1})\cdots \varphi_\hH(x_r)\\
&=\varphi_\hK(a)\rightharpoonup\varphi_\hH(x_1)\cdots \varphi_\hH(x_r)\\
&=\varphi_\hK(a)\rightharpoonup \varphi_\hH(v),
\end{align*}
as needed.

Next we prove the commutativity of the outer rectangle in the diagram~\meqref{eq:diag}, namely the equality
\begin{equation} \varphi_\hH(\bar\pi(u))=\pi(\varphi_\hK(u)),\quad\forall u\in U(P(\hK)).
\mlabel{eq:ast1}
\end{equation}
We apply induction on the degree of $u$ via the coradical filtration $\{U(P(\hK))_n\}_{n\geq0}$ of $U(P(\hK))$.

First note that $\varphi_\hH \bar\pi j_\hK=\pi \varphi_\hK j_\hK$ from the above discussion. So Eq.~\meqref{eq:ast1} clearly holds when $u\in U(P(\hK))_1$. Now suppose that Eq.~\meqref{eq:ast1} holds for all $u\in U(P(\hK))_k$ with $k\geq1$. To complete the inductive step, we only need to check elements of the form $au$
with $a\in P(\hK)$, as they span $U(P(\hK))_{k+1}$. The left hand side of Eq.~\meqref{eq:ast1} for $au\in U(P(\hK))_{k+1}$ is
\begin{align*}
\varphi_\hH(\bar\pi(au))
&=\varphi_\hH\left(\bar\pi(a)\bar\pi(u)+ a\rightharpoonup \bar\pi(u)\right)\\
&=\varphi_\hH\left(\bar\pi(a)\bar\pi(u)\right)+\varphi_\hH\left(a\rightharpoonup \bar\pi(u)\right)\\
&=\varphi_\hH(\bar\pi(a))\varphi_\hH(\bar\pi(u))+\varphi_\hK(a)\rightharpoonup \varphi_\hH(\bar\pi(u))\\
&=\pi(\varphi_\hK(a))\pi(\varphi_\hK(u))+\varphi_\hK(a)\rightharpoonup \pi(\varphi_\hK(u))\\
&=\pi(\varphi_\hK(a)\varphi_\hK(u))\\
&=\pi(\varphi_\hK(au)),
\end{align*}
where the first and the second to the last equalities use Eq.~\meqref{eq:crosshomo-Hopf}, the third equality is due to Eq.~\meqref{eq:club} and the fourth one is obtained by the induction hypothesis.

In order to show that $\varphi_\hK$ and $\varphi_\hH$ give an isomorphism between the Hopf algebra crossed homomorphisms $\pi$ and $\bar\pi$, it remains to show
$$\varphi_\hH\left(u\rightharpoonup v\right)=\varphi_\hK(u)\rightharpoonup \varphi_\hH(v),\quad\forall v\in U(P(\hH)),\,u\in U(P(\hK)),$$
which we again prove by induction on the degree of $u$. When $u\in U(P(\hK))_1$, this is just due to Eq.~\meqref{eq:club}. Now suppose that it holds for all $u\in U(P(\hK))_k$ with $k\geq1$, then given any $a\in P(\hK)$,  we only need to check $au \in U(P(\hK))_{k+1}$ as noted in the previous paragraph. In this case we have
\begin{align*}
\varphi_\hH\left(au\rightharpoonup v\right)
&=\varphi_\hH\left(a\rightharpoonup (u\rightharpoonup v)\right)\\
&=\varphi_\hK(a)\rightharpoonup \varphi_\hH(u\rightharpoonup v)\\
&=\varphi_\hK(a)\rightharpoonup (\varphi_\hK(u)\rightharpoonup \varphi_\hH(v))\\
&=\varphi_\hK(a)\varphi_\hK(u)\rightharpoonup \varphi_\hH(v)\\
&=\varphi_\hK(au)\rightharpoonup \varphi_\hH(v),
\end{align*}
where the second and the third equalities are obtained by the induction hypothesis.
\end{proof}

In particular, when the crossed homomorphisms are \difcops, we obtain the following \difc enhancement of the Milnor-Moore isomorphism.
\begin{coro}\mlabel{coro:mm-difc}
Let  $D$ be a difference operator on a connected cocommutative Hopf algebra $H$. Then there is an isomorphism   from the difference operator $\bar{D}$ on $U(P(H))$ to the difference operator $D$ on $H$.
\end{coro}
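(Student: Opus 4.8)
The plan is to obtain Corollary~\ref{coro:mm-difc} as the special case of Theorem~\ref{CMM-Diff-hopf} in which $\hK=\hH=H$ and the module bialgebra action is the adjoint action $\ad:H\to\End(H)$, $\ad_xy=x_1yS(x_2)$. First I would recall that, by the definitions in Section~\ref{ss:ch}, a difference operator on $H$ is exactly a Hopf algebra crossed homomorphism $D:H\to H$ with respect to $\ad$. To apply Theorem~\ref{CMM-Diff-hopf} I must check that $\ad$ is a module bialgebra action when $H$ is cocommutative: the counit compatibility $\vep(\ad_xy)=\vep(x)\vep(y)$ is immediate, and a short Sweedler computation gives $\Delta(\ad_xy)=x_1y_1S(x_4)\otimes x_2y_2S(x_3)$, which after reindexing the four legs of $\Delta^{(3)}(x)$ by cocommutativity equals $(\ad_{x_1}y_1)\otimes(\ad_{x_2}y_2)$. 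With this in hand, Theorem~\ref{CMM-Diff-hopf} produces a Hopf algebra crossed homomorphism $\bar D:U(P(H))\to U(P(H))$ with respect to the extended module bialgebra action $\overline{\ad|_{P(H)}}$, together with the Milnor--Moore isomorphism $\varphi_H$ realizing $(\varphi_H,\varphi_H)$ as an isomorphism of crossed homomorphisms between $\bar D$ and $D$.

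The main (if mild) obstacle is to verify that $\overline{\ad|_{P(H)}}$ is genuinely the adjoint action of the Hopf algebra $U(P(H))$ on itself, so that $\bar D$ qualifies as a difference operator and not merely as a crossed homomorphism for some abstract extended action. I would argue as follows. For $x\in P(H)$ one has $S(x)=-x$ and $\Delta x=x\otimes1+1\otimes x$, hence the restricted adjoint action is $\ad_xy=xy-yx=[x,y]$, i.e. it is the adjoint representation of the Lie algebra $P(H)$. Now compare the two algebra homomorphisms $U(P(H))\to\End(U(P(H)))$ given by $\overline{\ad|_{P(H)}}$ (the construction preceding Proposition~\ref{prop:cross-uea}) and by the adjoint action $\ad^{U(P(H))}$ of the Hopf algebra $U(P(H))$. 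On a generator $x\in P(H)$ the latter is left multiplication by $x$ minus right multiplication by $x$, and the telescoping identity
\[
xy_1\cdots y_r-y_1\cdots y_r x=\sum_{i=1}^r y_1\cdots y_{i-1}[x,y_i]y_{i+1}\cdots y_r
\]
shows it agrees with $\overline{\ad|_{P(H)}}(x)$. Since $P(H)$ generates $U(P(H))$ as an algebra and both maps are algebra homomorphisms, they coincide.

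Finally I would assemble the conclusion. By the previous paragraph $\bar D$ is a crossed homomorphism with respect to the adjoint action of $U(P(H))$, hence a difference operator on $U(P(H))$; moreover it is precisely the operator obtained from $D$ by first restricting to $P(H)$ (Corollary~\ref{coro:restrict-diff}) and then extending via Proposition~\ref{prop:cross-uea}. A homomorphism of difference operators is a homomorphism of the underlying Hopf algebras intertwining the operators, and for any Hopf algebra homomorphism $f$ the action compatibility $f(\ad_ax)=\ad_{f(a)}f(x)$ is automatic; thus the pair $(\varphi_H,\varphi_H)$ furnished by Theorem~\ref{CMM-Diff-hopf} is in particular an isomorphism of difference operators. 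This gives the desired isomorphism from $\bar D$ on $U(P(H))$ to $D$ on $H$.
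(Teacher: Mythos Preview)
Your proposal is correct and follows exactly the approach the paper intends: the corollary is stated without proof as the immediate specialization of Theorem~\ref{CMM-Diff-hopf} to the adjoint action. You have in fact supplied more detail than the paper does, carefully verifying that $\ad$ is a module bialgebra action under cocommutativity and that the extended action $\overline{\ad|_{P(H)}}$ coincides with the adjoint action of $U(P(H))$, so that $\bar D$ is genuinely a difference operator rather than merely a crossed homomorphism for some auxiliary action; these checks are implicit in the paper but worth making explicit.
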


At the end of this section, we establish a relation between difference cocommutative  Hopf algebras and Rota-Baxter   cocommutative  Hopf algebras.

In \mcite{Go}, Goncharov defined a {\bf Rota-Baxter operators
of weight 1} on a {\it cocommutative} Hopf algebra $H$ as a coalgebra homomorphism $B$ satisfying
\begin{equation}\mlabel{eq:rb-Hopf}
B(x)B(y)=B\left(x_1\ad_{B(x_2)}y\right)=B\left(x_1 B(x_2)y S(B(x_3))\right),\quad \forall\,x,y\in H.
\end{equation}

\begin{prop}
Let $H$ be a cocommutative Hopf algebra and let $D$ be a coalgebra automorphism on $D$. Then $D$ is a \difcop if and only if its inverse $D^{-1}$ is a Rota-Baxter operator of weight $1$.
\end{prop}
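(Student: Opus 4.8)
The plan is to exploit the fact that the \difcop identity for $D$ and Goncharov's Rota-Baxter identity for $B:=D^{-1}$ are interchanged by a single change of variables. The only structural input I would record first is this: since $D$ is a coalgebra automorphism, so is $B=D^{-1}$; in particular both $D$ and $B$ are coalgebra homomorphisms, so for every $a\in H$ one has $\Delta^{(2)}(D(a))=D(a_1)\ot D(a_2)\ot D(a_3)$, and of course $B(D(a))=a$. Combining these, $D(a)_1\ot D(a)_2\ot D(a)_3=D(a_1)\ot D(a_2)\ot D(a_3)$ and $B(D(a_i))=a_i$ in the evident Sweedler sense. No further property of $H$ is needed; cocommutativity enters only because Goncharov's notion of a Rota-Baxter operator of weight $1$ is formulated for cocommutative Hopf algebras.

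For the forward implication, assume $D$ is a \difcop, i.e. Eq.~\meqref{eq:diff-Hopf} holds, and put $B=D^{-1}$. Given $x,y\in H$, set $a:=B(x)$ and $b:=B(y)$, so that $x=D(a)$, $y=D(b)$ and $x_1\ot x_2\ot x_3=D(a_1)\ot D(a_2)\ot D(a_3)$. Substituting into the right-hand side of Eq.~\meqref{eq:rb-Hopf} and using $B(D(a_i))=a_i$,
\[
B\big(x_1\, B(x_2)\, y\, S(B(x_3))\big)
= B\big(D(a_1)\, a_2\, D(b)\, S(a_3)\big)
= B\big(D(ab)\big)
= ab = B(x)B(y),
\]
where the middle equality is the \difcop identity applied to $a$ and $b$. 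Hence $B$ satisfies Eq.~\meqref{eq:rb-Hopf}, and since $B$ is a coalgebra homomorphism it is a Rota-Baxter operator of weight $1$.

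For the converse, assume $B=D^{-1}$ is a Rota-Baxter operator of weight $1$. Given $x,y\in H$, apply Eq.~\meqref{eq:rb-Hopf} with the arguments $D(x)$ and $D(y)$ in place of its two inputs. Using $B(D(x))=x$, $B(D(y))=y$, $D(x)_1\ot D(x)_2\ot D(x)_3=D(x_1)\ot D(x_2)\ot D(x_3)$ and $B(D(x_i))=x_i$, the identity becomes
\[
xy = B(D(x))\,B(D(y)) = B\big(D(x_1)\, x_2\, D(y)\, S(x_3)\big).
\]
Applying $D$ to both sides gives $D(xy)=D(x_1)x_2D(y)S(x_3)$; since $D$ is already a coalgebra homomorphism, this is precisely the statement that $D$ is a \difcop.

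I do not expect a genuine obstacle here: the argument is formal once the bookkeeping is in place. The only point requiring care is correctly propagating Sweedler indices through $D$ and $B$ and invoking coassociativity to unfold the two-fold coproduct hidden inside $\ad_{x_2}$ and $\ad_{B(x_2)}$ — exactly the observation isolated in the first paragraph — after which each implication reduces to the one-line substitution displayed above.
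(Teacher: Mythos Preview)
Your proposal is correct and follows essentially the same approach as the paper. The only cosmetic difference is that the paper, in the forward direction, starts from $B(x)B(y)=B\big(D(B(x)B(y))\big)$ and expands $D(B(x)B(y))$ via Eq.~\meqref{eq:diff-Hopf} (using that $B$ is a coalgebra map so $B(x)_i=B(x_i)$), whereas you rename $a=B(x)$, $b=B(y)$ and run the same computation from the other side; the converse is handled identically by both.
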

\begin{proof}
Write $B=D^{-1}$. Then $B$ is also a coalgebra homomorphism. If $D$ is a \difcop, then we use Eq.~\meqref{eq:diff-Hopf} to check that
\begin{align*}
B(x)B(y)&=B\left(D(B(x)B(y))\right)\\
&=B\left(D(B(x_1))B(x_2)D(B(y))S(B(x_3))\right)\\
&=B\left(x_1B(x_2)yS(B(x_3))\right)
\end{align*}
for all $x,y\in H$. Hence, $B$ satisfies Eq.~\meqref{eq:rb-Hopf} and is a Rota-Baxter operator of weight 1.

Conversely, if $B$ is a Rota-Baxter operator, then by Eq.~\meqref{eq:rb-Hopf},
\begin{align*}
D(xy)&=D\left(B(D(x))B(D(y))\right)\\
&=D\left(B\left(D(x_1)B(D(x_2))D(y)S(B(D(x_3)))\right)\right)\\
&=D(x_1)x_2D(y)S(x_3)
\end{align*}
for all $x,y\in H$. So $D$ is a \difcop.
\end{proof}

\section{Graph characterizations of crossed homomorphisms}
\mlabel{sec:graph}
In this section, we use the (left) graphs of maps to characterize crossed homomorphisms on Hopf algebras and study their relationship with the graphs of Lie algebra  crossed homomorphisms.

Let $\phi:\g\to\Der(\h)$ be an action of a Lie algebra  $(\g,[\cdot,\cdot]_\g)$  on   a Lie algebra   $(\mathcal{\h},[\cdot,\cdot]_\h)$.
Denote by $\frak h\rtimes\frak g$ the semi-direct product Lie algebra  of $\frak h$ and $\frak g$ with respect to the   action $(\h,\phi)$. More precisely, the Lie bracket $[\cdot,\cdot]_{\rtimes}:\wedge^2(\h\oplus\g)\to \h\oplus\g$ is given by
\begin{eqnarray*}
	~[(u,x),(v,y)]_{\rtimes}&=&([u,v]_\frak h+\phi(x)(v)-\phi(y)(u),[x,y]_\frak g),\quad \forall x,y\in \g, ~u,v\in \h.
\end{eqnarray*}

It is proved in \mcite{PSTZ} that a linear map $\pi:\g\to \h$ is a Lie algebra crossed homomorphism with respect to the action $(\h,\phi)$  if and only if the graph of $\pi$, $$\Gr_\pi:=\{(\pi(x),x)\,|\,x\in\frak g\}$$
is a Lie subalgebra of $\frak h\rtimes\frak g$. Similar graph characterizations can also be found for other algebraic structures. See~\mcite{Uch} for example.

For Hopf algebras $\hK$ and $\hH$, let $\rightharpoonup:\hK\to \End(\hH)$ be an action such that $\hH$ is an $\hK$-module algebra.  The {\bf smash product} of Hopf algebras $\hH$ and $\hK$ is defined to be $\hH\otimes \hK$ with the multiplication
$$(x\# a)(y\# b)=x(a_1\rightharpoonup y)\# a_2b, \quad \forall x,y\in \hH,\, a,b\in \hK,$$
with $x\otimes a\in \hH\otimes \hK$ denoted by $x\# a$. We denote such a smash product algebra by $\hH\# \hK$.
If $\hK$ is cocommutative  and $\hH$ is further an $\hK$-module bialgebra,
then $\hH\# \hK$ becomes a Hopf algebra with the usual tensor product comultiplication and the antipode defined by
$$S(x\# a):=(S_\hK(a_1)\rightharpoonup S_\hH(x))\#S_\hK(a_2).$$

\begin{defn}
Given any coalgebra homomorphism $\pi:\hK\to \hH$, the {\bf graph} of $\pi$, denoted by $\Gr_{\pi}$, is defined to be the subspace $\im((\pi\otimes\id)\Delta_\hK)$ of $\hH\otimes \hK$, that is,
$$\Gr_{\pi}\coloneqq \{\pi(a_1)\otimes a_2\,|\,a\in \hK\}.$$
\end{defn}

\begin{theorem}
Let $\hK$ and $\hH$ be Hopf algebras with an algebra action  $\rightharpoonup:\hK\to \End(\hH)$ such that $\hH$ is an $\hK$-module bialgebra.
A coalgebra homomorphism $\pi:\hK\to \hH$ is a Hopf algebra crossed homomorphism with respect to the action $\rightharpoonup$ if and only if its graph $\Gr_{\pi}$ is a subalgebra of the smash product algebra $\hH\#\hK$.
\mlabel{th:leftgr}
\end{theorem}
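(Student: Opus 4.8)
The plan is to prove both implications by a direct computation, using the multiplication rule of the smash product $\hH\#\hK$ together with the fact that $\pi$ is already assumed to be a coalgebra homomorphism. The key preliminary observation is that $\Gr_\pi = \im((\pi\otimes\id)\Delta_\hK)$ is automatically a subcoalgebra of $\hH\otimes\hK$, and it is a subalgebra precisely when the product of two of its generating elements $\pi(a_1)\#a_2$ and $\pi(b_1)\#b_2$ lands back in $\Gr_\pi$; since the antipode of $\hH\#\hK$ applied to elements of $\Gr_\pi$ can be tracked via Lemma~\ref{le:crosshomo-anti}, unitality ($\pi(1)=1$) is also available. So the whole theorem reduces to comparing the product
$$(\pi(a_1)\#a_2)(\pi(b_1)\#b_2) = \pi(a_1)(a_2\rightharpoonup\pi(b_1))\#a_3 b_2$$
with the general element $\pi(c_1)\#c_2$ of $\Gr_\pi$.

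For the forward direction, I would assume Eq.~\eqref{eq:crosshomo-Hopf} and substitute $\pi(a_1)(a_2\rightharpoonup\pi(b_1)) = \pi(a_1 b_1)$ into the product above, obtaining $\pi(a_1b_1)\#a_2b_2 = \pi((ab)_1)\#(ab)_2$ after using that $\pi$ is a coalgebra homomorphism and cocommutativity/naturality of $\Delta$; this displays the product as an element of $\Gr_\pi$ generated by $ab$, so $\Gr_\pi$ is closed under multiplication, and it contains $\pi(1)\#1 = 1\#1$ by Lemma~\ref{le:crosshomo-anti}\eqref{it:crossp1}, hence is a subalgebra. For the converse, I would assume $\Gr_\pi$ is a subalgebra, so for each pair $a,b$ there exists an element $w\in\hK$ with $\pi(a_1)(a_2\rightharpoonup\pi(b_1))\#a_3b_2 = \pi(w_1)\#w_2$. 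Applying $\id_\hH\otimes\vep_\hK$ to both sides kills the second tensor factor and leaves $\pi(a_1)(a_2\rightharpoonup\pi(b_1))\,\vep_\hK(a_3b_2) = \pi(a)(??\rightharpoonup\pi(b))$—more precisely, applying $\vep_\hK$ to the $\hK$-slot forces $a_3b_2\mapsto \vep(a_3)\vep(b_2)$, so the left side collapses to $\pi(a_1)(a_2\rightharpoonup\pi(b))$ while the right side collapses to $\pi(w)\vep_\hK(\text{stuff})$; combined with applying $\vep_\hH\otimes\id_\hK$ to pin down $w = ab$ (using $\vep_\hH\pi = \vep_\hK$ and $\vep_\hH(a\rightharpoonup x)=\vep_\hK(a)\vep_\hH(x)$), one recovers exactly Eq.~\eqref{eq:crosshomo-Hopf}.

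The main technical point—and the step I expect to require the most care—is the bookkeeping in the converse direction: from the equality of the two elements of $\hH\otimes\hK$ one must correctly extract the identity in $\hH$ alone. The cleanest route is: first apply $\vep_\hH\otimes\id_\hK$ to the subalgebra-closure equation to identify which element $c\in\hK$ generates the product (this gives $a_2 b_1\cdot(\text{scalar})$ collapsing to $ab$ after using $\vep_\hH(a_2\rightharpoonup\pi(b_1)) = \vep_\hK(a_2)\vep_\hK(b_1)$ and $\vep_\hH\pi(a_1)=\vep_\hK(a_1)$), and then apply $\id_\hH\otimes\vep_\hK$ to recover the $\hH$-component, yielding $\pi(a_1)(a_2\rightharpoonup\pi(b)) = \pi((ab)_1)\vep_\hK((ab)_2) = \pi(ab)$. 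One must be careful that the module-bialgebra compatibility $\vep_\hH(a\rightharpoonup x) = \vep_\hK(a)\vep_\hH(x)$ is exactly what makes the counit of $\hH$ behave well on the smashed product, so this hypothesis (rather than mere module-algebra structure) is genuinely used. The forward direction is essentially a one-line verification once the product formula is written out, so the burden of the proof lies entirely in this extraction argument.
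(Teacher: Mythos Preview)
Your proposal is correct and follows essentially the same route as the paper: the forward direction is the one-line computation $(\pi(a_1)\#a_2)(\pi(b_1)\#b_2)=\pi(a_1b_1)\#a_2b_2$, and the converse applies $\vep_\hH\otimes\id_\hK$ and $\id_\hH\otimes\vep_\hK$ to the equality $\pi(w_1)\#w_2=\pi(a_1)(a_2\rightharpoonup\pi(b_1))\#a_3b_2$ to extract $w=ab$ and then $\pi(ab)=\pi(a_1)(a_2\rightharpoonup\pi(b))$, exactly as in the paper. Two small remarks: the ``subcoalgebra'' observation and the antipode comment are not needed for this theorem (they belong to the subsequent corollary, which requires $\hK$ cocommutative), and in the forward direction no cocommutativity is used---only that $\Delta_\hK$ is an algebra map, so $(ab)_1\otimes(ab)_2=a_1b_1\otimes a_2b_2$.
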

\begin{proof}
If $\pi:\hK\to \hH$ is a crossed homomorphism, then
$$(\pi(a_1)\# a_2)(\pi(b_1)\# b_2) = \pi(a_1)(a_2\rightharpoonup \pi(b_1))\# a_3b_2 = \pi(a_1b_1)\#a_2b_2\in \Gr_{\pi},\quad \forall\,a,b\in \hK.$$
Also, $1\#1=\pi(1)\#1\in \Gr_{\pi}$. Hence, $\Gr_{\pi}$ is a subalgebra of $\hH\#\hK$.

Conversely, assume that $\Gr_{\pi}$ is a subalgebra of $\hH\#\hK$. Then for any $a,b\in \hK$, there exists $w\in \hK$ such that
$$\pi(w_1)\# w_2=(\pi(a_1)\# a_2)(\pi(b_1)\# b_2) = \pi(a_1)(a_2\rightharpoonup \pi(b_1))\# a_3b_2.$$
In particular,
$$w=\vep_\hH(\pi(w_1))w_2=\vep_\hH(\pi(a_1)(a_2\rightharpoonup \pi(b_1)))a_3b_2=\vep_\hH(a_1\rightharpoonup \pi(b_1))a_2b_2=ab,$$
as the counit $\vep_\hH$ of $\hH$ is compatible with $\rightharpoonup$. On the other hand,
$$\pi(w)=\pi(w_1)\vep_\hK(w_2)= \pi(a_1)(a_2\rightharpoonup \pi(b_1))\vep_\hK(a_3b_2) = \pi(a_1)(a_2\rightharpoonup \pi(b)),$$
so $\pi(ab)=\pi(a_1)(a_2\rightharpoonup \pi(b))$, showing that $\pi:\hK\to \hH$ is a crossed homomorphism.
\end{proof}

\begin{coro}\mlabel{coro:leftgr}
Let $\hH$ and $\hK$ be as in Theorem~\mref{th:leftgr} and with $\hK$ cocommutative. A coalgebra homomorphism $\pi:\hK\to \hH$ is a Hopf algebra crossed homomorphism with respect to the action $\rightharpoonup$ if and only if $\Gr_{\pi}$ is a Hopf subalgebra of the smash product Hopf algebra $\hH\# \hK$.

Moreover, a Hopf algebra crossed homomorphism induces a Hopf algebra isomorphism,
\begin{equation}\mlabel{eq:isom-leftgr}
\Psi:\hK\to \Gr_{\pi},\,a\mapsto \pi(a_1)\# a_2.
\end{equation}
\end{coro}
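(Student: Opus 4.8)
The plan is to first dispense with the coalgebra statement, then construct the map $\Psi$ and verify it is a Hopf algebra isomorphism onto $\Gr_\pi$, from which the ``Hopf subalgebra'' claim follows for free. Since $\hK$ is cocommutative and $\hH$ is an $\hK$-module bialgebra, $\hH\#\hK$ is a genuine Hopf algebra with the tensor comultiplication and the antipode recalled before the statement. First I would observe that $\Gr_\pi = \im\big((\pi\otimes\id)\Delta_\hK\big)$ is always a subcoalgebra of $\hH\otimes\hK$: because $\pi$ is a coalgebra homomorphism, $\Delta_{\hH\#\hK}(\pi(a_1)\#a_2) = (\pi(a_1)\#a_2)\otimes(\pi(a_3)\#a_4)$ lies in $\Gr_\pi\otimes\Gr_\pi$, and $\vep$ restricts correctly. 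So the coalgebra part of ``Hopf subalgebra'' is automatic, and by Theorem~\ref{th:leftgr} the subalgebra part is equivalent to $\pi$ being a crossed homomorphism; closure under the antipode will come out of the isomorphism below (or directly from Lemma~\ref{le:crosshomo-anti}\meqref{it:crossp2}, which expresses $S_{\hH\#\hK}$ applied to a graph element again as a graph element).

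Next I would define $\Psi:\hK\to\hH\#\hK$ by $\Psi(a) = \pi(a_1)\#a_2$ and show it is an algebra homomorphism when $\pi$ is a crossed homomorphism. This is exactly the computation already performed in the first half of the proof of Theorem~\ref{th:leftgr}: $\Psi(a)\Psi(b) = (\pi(a_1)\#a_2)(\pi(b_1)\#b_2) = \pi(a_1)(a_2\rightharpoonup\pi(b_1))\#a_3b_2 = \pi(a_1b_1)\#a_2b_2 = \Psi(ab)$, and $\Psi(1) = \pi(1)\#1 = 1\#1$ by Lemma~\ref{le:crosshomo-anti}\meqref{it:crossp1}. That $\Psi$ is a coalgebra homomorphism is the observation from the previous paragraph. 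Hence $\Psi$ is a bialgebra, thus Hopf algebra, homomorphism, and its image is precisely $\Gr_\pi$ by definition.

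It remains to show $\Psi$ is injective, so that it is an isomorphism onto $\Gr_\pi$; this is the one genuinely new point and the main (minor) obstacle. The clean way is to exhibit a retraction: the map $\rho:\hH\#\hK\to\hK$, $x\#a\mapsto \vep_\hH(x)a$, is linear, and $\rho\Psi(a) = \vep_\hH(\pi(a_1))a_2 = \vep_\hK(a_1)a_2 = a$, using that $\pi$ is a coalgebra homomorphism so $\vep_\hH\pi = \vep_\hK$. Therefore $\Psi$ is injective, hence a bialgebra isomorphism onto the subbialgebra $\Gr_\pi$; since a subbialgebra of a Hopf algebra that is the isomorphic image of a Hopf algebra is automatically a Hopf subalgebra (the antipode transports via $\Psi$, i.e. $S_{\hH\#\hK}|_{\Gr_\pi} = \Psi S_\hK \Psi^{-1}$), we conclude $\Gr_\pi$ is a Hopf subalgebra of $\hH\#\hK$ and $\Psi:\hK\to\Gr_\pi$ is the asserted Hopf algebra isomorphism. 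For the converse direction of the ``if and only if'', if $\Gr_\pi$ is a Hopf subalgebra then in particular it is a subalgebra, so Theorem~\ref{th:leftgr} gives that $\pi$ is a crossed homomorphism.
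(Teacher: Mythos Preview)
Your argument is correct and follows essentially the same route as the paper: verify the subcoalgebra property, invoke Theorem~\ref{th:leftgr} for the subalgebra part, and exhibit $\vep_\hH\otimes\id$ as a left inverse of $\Psi$ to get the isomorphism (the paper states this inverse directly rather than phrasing it as a retraction). One small slip worth flagging: the subcoalgebra claim is \emph{not} automatic from $\pi$ being a coalgebra map alone. The tensor comultiplication gives
\[
\Delta\big(\pi(a_1)\# a_2\big)=(\pi(a_1)\# a_3)\otimes(\pi(a_2)\# a_4),
\]
and it is precisely the cocommutativity of $\hK$ that lets you rewrite this as $(\pi(a_1)\# a_2)\otimes(\pi(a_3)\# a_4)\in\Gr_\pi\otimes\Gr_\pi$; so drop the word ``always'' and cite cocommutativity at that step (this is also where $\Psi$ being a \emph{coalgebra} map comes from).
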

\begin{proof}
By Theorem~\mref{th:leftgr}, we only need to show that $\Gr_{\pi}$ is a subcoalgebra
of $\hH\# \hK$, and $S(\Gr_{\pi})\subseteq \Gr_{\pi}$. These statements follow from the cocommutativity of $\hK$:
$$\Delta(\pi(a_1)\# a_2)=(\pi(a_1)\# a_3)\otimes(\pi(a_2)\# a_4)=(\pi(a_1)\# a_2)\otimes(\pi(a_3)\# a_4)\in \Gr_{\pi}\otimes \Gr_{\pi},$$
and Lemma~\mref{le:crosshomo-anti}~\meqref{it:crossp2}:
$$S(\pi(a_1)\# a_2)=(S_\hK(a_1)\rightharpoonup S_\hH\pi(a_2))\# S_\hK(a_3)=\pi S_\hK(a_1)\# S_\hK(a_2)\in \Gr_{\pi}.$$

From the above discussion, one can easily see that $\Psi$ as stated is a Hopf algebra isomorphism with its inverse $\Psi^{-1}=\vep_\hH\otimes\id$.
\end{proof}

We now show that the universal  enveloping algebra
$U(\Gr_\pi)$ of the graph of a Lie algebra crossed homomorphism $\pi$ is given by the graph of the Hopf algebra crossed homomorphism $\bar{\pi}$ lifted from $\pi$.

\begin{theorem}\mlabel{prop:Liegr-leftgr}
Let $\pi:\g\to \h$ be a crossed homomorphism on a Lie algebra $\g$ with respect to an action $(\h,\phi)$ and let $\bar{\pi}:U(\g)\to U(\h)$ be the lifted Hopf algebra crossed homomorphism given in Proposition \mref{prop:cross-uea}.
 \begin{enumerate}
  \item \mlabel{it:grarel1} There is a Hopf algebra isomorphism $U(\frak h)\#U(\frak g)\simeq U(\frak h\rtimes\frak g)$, that is, the universal enveloping algebra of a Lie algebra semidirect product is isomorphic to the corresponding Hopf algebra smash product;

   \item \mlabel{it:grarel2} There is a Hopf algebra isomorphism $\Gr_{\bar{\pi}}\simeq U(\Gr_\pi),$ that is, the universal enveloping algebra of the graph of a Lie algebra crossed homomorphism is isomorphic to the graph of the lifted Hopf algebra crossed homomorphism.
 \end{enumerate}
\end{theorem}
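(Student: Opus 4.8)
The plan is to deduce both statements from the classical Milnor--Moore theorem by computing, in each case, the Lie algebra of primitive elements of the Hopf algebra in question.

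For the first statement, I would begin by noting that $U(\h)\#U(\g)$ is again connected and cocommutative: its underlying coalgebra is the tensor coalgebra of the connected cocommutative coalgebras $U(\h)$ and $U(\g)$, so $P\big(U(\h)\#U(\g)\big)=(\h\#1)\oplus(1\#\g)$. The core computation is then the commutator bracket of these primitives inside the smash product. For $u,v\in\h$ one gets $[u\#1,v\#1]=[u,v]_\h\#1$ at once; for $a,b\in\g$, the module algebra identity $a_1\rightharpoonup 1=\vep(a_1)1$ gives $(1\#a)(1\#b)=1\#ab$, whence $[1\#a,1\#b]=1\#[a,b]_\g$; and for $x\in\g$ a primitive and $u\in\h$, comparing $(1\#x)(u\#1)=(x_1\rightharpoonup u)\#x_2=\phi(x)(u)\#1+u\#x$ with $(u\#1)(1\#x)=u\#x$ yields $[1\#x,u\#1]=\phi(x)(u)\#1$. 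These are precisely the structure relations of $\h\rtimes\g$, so $(u,x)\mapsto u\#1+1\#x$ is a Lie algebra isomorphism $\h\rtimes\g\xrightarrow{\ \sim\ }P\big(U(\h)\#U(\g)\big)$, and applying Milnor--Moore produces the claimed Hopf algebra isomorphism $\Theta\colon U(\h\rtimes\g)\xrightarrow{\ \sim\ }U(\h)\#U(\g)$, which on primitives sends $\iota_\h(u)\mapsto u\#1$ and $\iota_\g(x)\mapsto 1\#x$ for the canonical maps $\iota_\h,\iota_\g$ into $U(\h\rtimes\g)$.

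For the second statement, I would use the first to regard both $U(\Gr_\pi)$ and $\Gr_{\bar\pi}$ as Hopf subalgebras of the connected cocommutative Hopf algebra $U(\h)\#U(\g)\cong U(\h\rtimes\g)$: the former because $\Gr_\pi$ is a Lie subalgebra of $\h\rtimes\g$ (the graph characterization recalled from \mcite{PSTZ}), and the latter by Corollary~\mref{coro:leftgr} applied to the cocommutative Hopf algebra $U(\g)$ with the module bialgebra action $\bar\phi$ of Proposition~\mref{prop:cross-uea}; that corollary also supplies a Hopf algebra isomorphism $\Psi\colon U(\g)\to\Gr_{\bar\pi}$, $a\mapsto\bar\pi(a_1)\#a_2$. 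Both subalgebras being connected cocommutative, each equals the subalgebra of $U(\h)\#U(\g)$ generated by its own space of primitives. Now $P\big(U(\Gr_\pi)\big)=\Gr_\pi$, while $\Psi$ identifies $P(\Gr_{\bar\pi})$ with $\Psi(\g)$; and for $x\in\g$, using $\bar\pi(1)=1$ from Lemma~\mref{le:crosshomo-anti} and $\bar\pi|_\g=\pi$ from Proposition~\mref{prop:cross-uea}, one computes $\Psi(x)=\bar\pi(x)\#1+\bar\pi(1)\#x=\pi(x)\#1+1\#x$, which corresponds under $\Theta$ to $(\pi(x),x)\in\Gr_\pi$. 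Hence $P(\Gr_{\bar\pi})=\Gr_\pi=P\big(U(\Gr_\pi)\big)$ inside $P\big(U(\h)\#U(\g)\big)$, so $\Gr_{\bar\pi}$ and $U(\Gr_\pi)$ are generated by the same subspace and therefore coincide, yielding in particular the asserted Hopf algebra isomorphism $\Gr_{\bar\pi}\simeq U(\Gr_\pi)$.

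I expect the only genuinely delicate point to be the bracket computation in the first part, where the module algebra compatibilities of $\rightharpoonup$ (especially $a\rightharpoonup 1=\vep(a)1$ and its consequence $(1\#a)(1\#b)=1\#ab$) and the fact that the smash product comultiplication is simply the tensor one must be handled with care; once the primitives and their bracket are pinned down, both isomorphisms follow formally from Milnor--Moore together with Corollary~\mref{coro:leftgr} and Proposition~\mref{prop:cross-uea}.
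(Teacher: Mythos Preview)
Your proof is correct and shares its essential content with the paper's: both hinge on the same commutator computation showing that $(u,x)\mapsto u\#1+1\#x$ is a Lie algebra homomorphism from $\h\rtimes\g$ into $U(\h)\#U(\g)$, and both identify $\Psi(x)=\pi(x)\#1+1\#x$ for $x\in\g$ in part~(ii). The difference lies only in how the isomorphism is completed. The paper extends the Lie map to an algebra homomorphism $\bar\varphi\colon U(\h\rtimes\g)\to U(\h)\#U(\g)$ via the universal property, checks on generators that it is a coalgebra map, observes surjectivity (the image of $\varphi$ generates), and then invokes the Heyneman--Radford theorem for injectivity (the restriction to $U(\h\rtimes\g)_1=\bk\oplus(\h\rtimes\g)$ is visibly injective); the same template is reused for part~(ii) by restricting $\varphi$ to $\Gr_\pi$. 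You instead compute $P\big(U(\h)\#U(\g)\big)=\h\oplus\g$ directly from the tensor coalgebra structure and apply Milnor--Moore, and in part~(ii) you match primitives of the two Hopf subalgebras. The two endgames are essentially equivalent in this connected cocommutative setting, but your packaging is a bit more economical and thematically apt given the paper's emphasis on Milnor--Moore, while the paper's version has the minor advantage of writing down the isomorphism explicitly rather than inferring it.
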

\begin{proof}
\meqref{it:grarel1} Define a linear map $\varphi:\frak h\rtimes\frak g\to U(\frak h)\#U(\frak g)$ by
$$\varphi(u,x)= u\#1+1\#x,\quad\forall x\in \g, ~u\in \h.$$
First we check that $\varphi$ is a Lie algebra homomorphism, which follows from
\begin{align*}
\varphi([(u,x),(v,y)]_{\rtimes})&=\varphi([u,v]_\frak h+\phi(x)(v)-\phi(y)(u),[x,y]_\frak g)\\
&=([u,v]_\frak h+\phi(x)(v)-\phi(y)(u))\#1+1\#[x,y]_\frak g,\\
[\varphi(u,x),\varphi(v,y)]&=[u\#1+1\#x,v\#1+1\#y]\\
&=uv\#1+u\#y+\phi(x)(v)\#1+v\#x+1\#xy
\\&\quad-(vu\#1+v\#x+\phi(y)(u)\#1+u\#y+1\#yx)\\
&=([u,v]_\frak h+\phi(x)(v)-\phi(y)(u))\#1+1\#[x,y]_\frak g,
\end{align*}
for all $ x,y\in \g, ~u,v\in \h.$

By the universal property of $U(\frak h\rtimes\frak g)$,
there exists a unique algebra homomorphism
$$\bar\varphi:U(\frak h\rtimes\frak g)\to U(\frak h)\#U(\frak g)$$
such that $\varphi=\bar\varphi\, i_{\frak h\rtimes\frak g}$, where $i_{\frak h\rtimes\frak g}:\frak h\rtimes\frak g\to U(\frak h\rtimes\frak g)$ is the natural embedding.

Since the image ${\rm Im}\,\varphi=\{u\#1,1\#x\,|\,x\in\frak g, u\in\h\}$ generates $U(\frak h)\#U(\frak g)$ as an algebra, $\bar\varphi$ is surjective.
On the other hand, one checks that
$$\Delta\bar\varphi=(\bar\varphi\otimes\bar\varphi)\Delta$$
holds on ${\rm Im}\,\varphi$, thus it holds on $U(\frak h\rtimes\frak g)$ as an equality of algebra homomorphisms. It is obvious that $\vep\bar\varphi=\vep$. Therefore, $\bar\varphi$ is a Hopf algebra surjection. By the Heyneman-Radford theorem  (see %\cite[Proposition 2.4.2]{HR} or
\cite[Theorem 5.3.1]{Mon}), $\bar\varphi$ is also injective, as $U(\frak h\rtimes\frak g)_1=\bk\oplus(\frak h\rtimes\frak g)$ and $\bar\varphi|_{U(\frak h\rtimes\frak g)_1}$ is clearly injective by the definition of $\varphi$. In conclusion, $\bar\varphi$ is the desired Hopf algebra isomorphism.

\meqref{it:grarel2} By Corollary~\mref{coro:leftgr}, $\Gr_{\bar{\pi}}$ is a Hopf subalgebra of $U(\frak h)\#U(\frak g)$ isomorphic to $U(\frak g)$. Define $\psi\coloneqq \varphi|_{\Gr_\pi}$.  Then $\psi$ is injective and
$$\psi(\pi(x),x)=\pi(x)\#1+1\#x
=(\bar\pi\otimes\id)\Delta(x)
\in \Gr_{\bar{\pi}},\quad \forall x\in\frak g.$$
Also, note that ${\rm Im}\,\psi$ generates $\Gr_{\bar{\pi}}$ as an algebra. Hence, $\psi$ induces a Hopf algebra isomorphism $\bar\psi:U(\Gr_\pi)\to \Gr_{\bar{\pi}}$ by the same argument as for $\bar\varphi$.
\end{proof}

\section{Derived actions from crossed homomorphisms}
\mlabel{sec:derived}
In this section, we show that  a crossed homomorphism on either a Lie algebra, a group or a Hopf algebra gives a derived action. Furthermore, these derived structures are related in the same way that the original structures are related.

\subsection{Derived actions for Lie algebras and Lie groups}
\mlabel{ss:derivedlie}
Let $\phi:\g\to\Der(\h)$ be an action of a Lie algebra  $(\g,[\cdot,\cdot]_\g)$  on   a Lie algebra   $(\mathcal{\h},[\cdot,\cdot]_\h)$.
By \cite[Lemma 2.6]{PSTZ}, a crossed homomorphism  $d:\g\to \h$ gives rise to a derived action $\phi_d:\g\to \Der(\h)$ given by
\begin{equation}\mlabel{diff-alg-action}
	\phi_d(x)(u)=\phi(x)(u)+[d(x),u]_\h,\quad \forall x\in\g, u\in\h.
\end{equation}
Furthermore, $-d$ is a Lie algebra crossed homomorphism with respect to the derived action $\phi_d$.

For group crossed homomorphisms, we have a similar result.
\begin{lemma}\mlabel{lem:da}
	Let  $D:\gG\to \gH$ be a group crossed homomorphism on $\gG$ with respect to an action  $(\gH,\Phi)$. There is a derived action $\Phi_D:\gG\to \Aut(\gH)$ by automorphisms given by
	\begin{equation}\mlabel{diff-gp-action}
		\Phi_D(g)h=\Ad_{D(g)}\Phi(g)(h)=D(g)\Phi(g)(h)D(g)^{-1},\quad \forall g,h\in \gG.
	\end{equation}
Moreover, the map
$$\overline{D}:\gG\to \gH, \quad \overline{D}(g):=D(g)^{-1}, \quad \forall g\in \gG$$
is a  group crossed homomorphism, called the {\bf derived crossed homomorphism} of $D$ with respect to the derived action  $(\gH,\Phi_D)$.
\end{lemma}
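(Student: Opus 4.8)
The plan is to verify the two assertions by direct computation, the only inputs being the defining identity Eq.~\eqref{crossed-homo-group} of a group crossed homomorphism and the fact that each $\Phi(g)$ is a group automorphism of $\gH$ (hence respects products and inversion).

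First I would show that $\Phi_D$ is an action by automorphisms. For fixed $g\in\gG$, the assignment $h\mapsto D(g)\Phi(g)(h)D(g)^{-1}$ is the composite $\Ad_{D(g)}\circ\Phi(g)$ of the automorphism $\Phi(g)$ with an inner automorphism, so $\Phi_D(g)\in\Aut(\gH)$. For multiplicativity I would expand
\begin{align*}
\Phi_D(g)\bigl(\Phi_D(g')(h)\bigr)
&=D(g)\,\Phi(g)\bigl(D(g')\Phi(g')(h)D(g')^{-1}\bigr)\,D(g)^{-1}\\
&=D(g)\Phi(g)(D(g'))\;\Phi(gg')(h)\;\Phi(g)(D(g'))^{-1}D(g)^{-1},
\end{align*}
using that $\Phi(g)$ is a homomorphism and $\Phi(g)\Phi(g')=\Phi(gg')$. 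Now Eq.~\eqref{crossed-homo-group} gives $D(g)\Phi(g)(D(g'))=D(gg')$, so the last line equals $D(gg')\Phi(gg')(h)D(gg')^{-1}=\Phi_D(gg')(h)$. (The identity element is handled as well: $D(1)=D(1)\Phi(1)(D(1))$ forces $D(1)=1$, hence $\Phi_D(1)=\id$.)

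Second, I would check that $\overline{D}(g)=D(g)^{-1}$ is a crossed homomorphism for $\Phi_D$. Inverting Eq.~\eqref{crossed-homo-group},
$$\overline{D}(gg')=D(gg')^{-1}=\Phi(g)(D(g'))^{-1}D(g)^{-1}=\Phi(g)\bigl(D(g')^{-1}\bigr)D(g)^{-1},$$
whereas
$$\overline{D}(g)\,\Phi_D(g)\bigl(\overline{D}(g')\bigr)=D(g)^{-1}\cdot D(g)\Phi(g)\bigl(D(g')^{-1}\bigr)D(g)^{-1}=\Phi(g)\bigl(D(g')^{-1}\bigr)D(g)^{-1},$$
and the two expressions agree, which is precisely Eq.~\eqref{crossed-homo-group} for $\overline{D}$ with respect to $\Phi_D$.

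There is no genuine obstacle here: the argument is routine group-theoretic bookkeeping, entirely parallel to the Lie algebra statement recalled from \cite{PSTZ} around Eq.~\eqref{diff-alg-action}. If a conceptual explanation is desired, both claims reflect the graph picture: $\{(D(g),g)\mid g\in\gG\}$ is a subgroup of the semidirect product $\gH\rtimes\gG$, and rewriting its elements in terms of the conjugated coordinates involving $D(g)^{-1}$ exhibits the same subgroup as the graph of $\overline{D}$ for the twisted action $\Phi_D$; but the displayed computations are the most economical proof.
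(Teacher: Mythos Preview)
Your proof is correct and follows essentially the same approach as the paper: both verify that $\Phi_D(g)=\Ad_{D(g)}\circ\Phi(g)$ is an automorphism as a composite, check multiplicativity via the cocycle identity $D(g)\Phi(g)(D(g'))=D(gg')$, and then verify the derived crossed homomorphism identity by direct expansion. The only cosmetic differences are that the paper expands $\Phi_D(g_1g_2)(h)$ toward $\Phi_D(g_1)\Phi_D(g_2)(h)$ rather than the reverse, and it omits your explicit check that $D(1)=1$ and your closing graph remark.
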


\begin{proof}
	First since $\Phi(g)$ and $\Ad_{D(g)}$ are automorphisms, it is obvious that $\Phi_D(g)$ is an automorphism of $\gH$ for all $g\in \gG.$
Also, by Eq.~\meqref{crossed-homo-group}, we have
	\begin{eqnarray*}
		\Phi_D(g_1g_2)(h)&=&\Ad_{D(g_1g_2)}\Phi(g_1g_2)(h)\\
		&=&\Ad_{D(g_1)\Phi(g_1)(D(g_2))}\Phi(g_1)\Phi(g_2)(h)\\
		&=&\Ad_{D(g_1)}\Ad_{\Phi(g_1)(D(g_2))}\Phi(g_1)\Phi(g_2)(h)\\
		&=&\Ad_{D(g_1)}\Phi(g_1)\Ad_{D(g_2)}\Phi(g_2)(h)\\
		&=&\Phi_D(g_1)\Phi_D(g_2)(h).
	\end{eqnarray*}
	Thus $\Phi_D:\gG\to \Aut(\gH)$ is an action by automorphisms.

Furthermore, for all $g,h\in \gG$, we have
\begin{eqnarray*}
 \overline{D}(g)\Phi_D(g)( \overline{D}(h))&=&D(g)^{-1}D(g)\Phi(g)( {D}(h)^{-1})D(g)^{-1}\\
 &=&(\Phi(g)( {D}(h)))^{-1}D(g)^{-1}\\
 &=&(D(g)\Phi(g)( {D}(h)))^{-1}\\
 &=&  \overline{D}(gh),
\end{eqnarray*}
which implies that $\overline{D}:\gG\to \gH$    is a   crossed homomorphism on $\gG$ with respect to the action  $(\gH,\Phi_D)$.
\end{proof}

We next establish a relationship between the induced action on a Lie algebra given by Eq.~\meqref{diff-alg-action} and the one on a Lie group given by Eq.~\meqref{diff-gp-action}. Here a crossed homomorphism on a Lie group $\gG$ with respect to an action $\Phi:\gG\to \Aut(\gH)$ is a smooth map $D:\gG\to \gH$ such that \meqref{crossed-homo-group} holds.

Let $(\frak g,[\cdot,\cdot]_\g)$  be the Lie algebra of the Lie group $\gG$.  Let $\exp:\g\to \gG$ denote the exponential map. Then the relation between the Lie bracket $[\cdot,\cdot]_\g$ and the Lie group multiplication is given by the following fundamental formula:
\begin{equation}\mlabel{eq:expo}
	[u,v]_\g=\frac{d^2}{dt ds}\,\bigg|_{t,s=0}\exp(tu)\exp(sv)\exp(-tu),\quad \forall u,v\in\g.
\end{equation}
For any $g\in \gG$, since $\Phi_D(g)$ is in $\Aut(\gH)$, it follows that $\Phi_D(g)(e_\gH)=e_\gH$ for the unit element $e_\gH$ in the Lie group $\gH$. By taking the differentiation, $\Phi_D(g)$ induces $\Phi_D(g)_*\in\Aut(\h)$. Consequently, we obtain a Lie group homomorphism, which is still denoted by $\Phi_D$, from $\gG$ to $\Aut(\h)$. Again taking the differentiation,   we obtain a Lie algebra homomorphism $(\Phi_D)_*:\g\to \Der(\h)$. The above process can be summarized in the following diagram:
\begin{equation}\mlabel{eq:relation}
	\small{
		\xymatrix{\gG \ar[rr]^{\Phi_D}\ar[d]_{\text{differentiation}} &  &  \Aut(\h) \ar[d]^{\text{differentiation}}  \\
			\g \ar[rr]^{ ( \Phi_D)_* } & &\Der(\h).}
	}
\end{equation}

On the other hand, it is obvious that the Lie group action $\Phi:\gG\to \Aut (\gH)$ induces a Lie algebra action of $\phi:\g\to\Der(\h)$, and the relation is given by
$$
\frac{d^2}{dtds}\,\bigg|_{t,s=0}\Phi( \exp_\gG(tx))(\exp_\gH(su)) =\phi(x)(u), \quad \forall x\in\g, u\in \h.
$$
By a discussion similar to the one in \cite[Theorem 2.17]{GLS}, $d:=D_{*e}:\g\to\h$ is a  crossed homomorphism on the Lie algebra $(\frak g,[\cdot,\cdot]_\g)$ with respect to the action $\phi:\g\to \Der(\h)$, and therefore,
$$
\frac{d}{dt}\,\bigg|_{t=0}D(\exp_\gG(tx))=\frac{d}{dt}\,\bigg|_{t=0}\exp_\gH(td(x)),\quad\forall x\in \g.
$$

Now we show that the differentiation of the derived action from a Lie group crossed homomorphism is the derived action from the Lie algebra crossed homomorphism from differentiation.

\begin{theorem}\mlabel{thm:diffcrossed}
	Let $D:\gG\to \gH$ be a crossed homomorphism on a Lie group $\gG$ with respect to an action $\Phi:\gG\to \Aut(\gH)$, and $d=D_{*e}$. Then we have
	$$
	( \Phi_D)_*=\phi_d.
	$$

Moreover, the differentiation of the derived Lie group crossed homomorphism $\overline{D}$ is exactly the derived Lie algebra crossed homomorphism $-d$.
\end{theorem}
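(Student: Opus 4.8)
The plan is to prove both identities by differentiating the defining formulas---\eqref{diff-gp-action} for $\Phi_D$ and $\overline{D}(g)=D(g)^{-1}$ for $\overline{D}$---along one-parameter subgroups $t\mapsto\exp_\gG(tx)$, reducing the mixed second derivative that computes $(\Phi_D)_*$ to a Leibniz rule for products of operator-valued curves that equal the identity at the base point.

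For the first identity, I would start from $(\Phi_D)_*(x)(u)=\frac{d^2}{dt\,ds}\big|_{t,s=0}\Phi_D(\exp_\gG(tx))(\exp_\gH(su))$ (the composite of the differentiations in diagram \eqref{eq:relation}), which by \eqref{diff-gp-action} equals
\[
\frac{d^2}{dt\,ds}\Big|_{t,s=0} D(\exp_\gG(tx))\,\Phi(\exp_\gG(tx))(\exp_\gH(su))\,D(\exp_\gG(tx))^{-1}.
\]
Writing $a(t):=D(\exp_\gG(tx))$ and $b(t,s):=\Phi(\exp_\gG(tx))(\exp_\gH(su))$, I note that $b(t,0)=e_\gH$ because each $\Phi(\exp_\gG(tx))$ is a group automorphism of $\gH$; hence for fixed $t$ the curve $s\mapsto a(t)b(t,s)a(t)^{-1}$ passes through $e_\gH$, with $s$-derivative at $0$ equal to $\Ad_{a(t)}\big(\Phi(\exp_\gG(tx))_*\,u\big)$, where $\Ad_{a(t)}$ denotes the differential at $e_\gH$ of conjugation by $a(t)$, acting on $\h$. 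Then I differentiate the $\h$-valued curve $t\mapsto \Ad_{a(t)}\big(\Phi(\exp_\gG(tx))_*\,u\big)$ at $t=0$: since a group crossed homomorphism fixes the identity ($D(e_\gG)=D(e_\gG)\Phi(e_\gG)(D(e_\gG))=D(e_\gG)^2$, so $a(0)=e_\gH$) and $\Phi(\exp_\gG(0))_*=\id_\h$, both operator factors equal $\id_\h$ at $t=0$, so the Leibniz rule gives
\[
\Big(\tfrac{d}{dt}\big|_{t=0}\Ad_{a(t)}\Big)(u)+\Big(\tfrac{d}{dt}\big|_{t=0}\Phi(\exp_\gG(tx))_*\Big)(u)=[\,a'(0),u\,]_\h+\phi(x)(u),
\]
using that the differential of the adjoint representation of $\gH$ is the bracket, and that $\frac{d}{dt}\big|_{t=0}\Phi(\exp_\gG(tx))_*=\phi(x)$ by the relation between $\Phi$ and $\phi$ recalled before the theorem. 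Finally $a'(0)=\frac{d}{dt}\big|_{t=0}D(\exp_\gG(tx))=d(x)$ by the definition $d=D_{*e}$, so $(\Phi_D)_*(x)(u)=[d(x),u]_\h+\phi(x)(u)=\phi_d(x)(u)$, which is exactly \eqref{diff-alg-action}.

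For the second identity, I would compute $\overline{D}_{*e}$ directly: $\overline{D}(\exp_\gG(tx))=D(\exp_\gG(tx))^{-1}$ is again a curve through $e_\gH$, so differentiating the inversion map yields $\overline{D}_{*e}(x)=\frac{d}{dt}\big|_{t=0}D(\exp_\gG(tx))^{-1}=-\frac{d}{dt}\big|_{t=0}D(\exp_\gG(tx))=-d(x)$, i.e. $\overline{D}_{*e}=-d$. This is also consistent with the abstract picture: $\overline{D}$ is a crossed homomorphism with respect to $\Phi_D$ by Lemma~\ref{lem:da}, $(\Phi_D)_*=\phi_d$ by the first part, and the differentiation of a Lie group crossed homomorphism with respect to $\Phi_D$ is a Lie algebra crossed homomorphism with respect to $\phi_d$ (as in \cite{GLS}); the computation merely identifies it as $-d$, the derived Lie algebra crossed homomorphism attached to $\phi_d$ recalled at the start of Section~\ref{ss:derivedlie}.

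The hard part will be making the Leibniz-rule reduction of the mixed second derivative of $(t,s)\mapsto a(t)b(t,s)a(t)^{-1}$ rigorous: one must check that all maps in sight are smooth (so that mixed partials commute and the orders of differentiation may be exchanged), that $s\mapsto a(t)b(t,s)a(t)^{-1}$ is a smooth family of curves through $e_\gH$ so that $\frac{\partial}{\partial s}\big|_{s=0}$ commutes with conjugation by $a(t)$ (producing the $\Ad_{a(t)}$), and that the $t$-derivative at $0$ of a product of two smooth $\GL(\h)$-valued curves both equal to $\id_\h$ at $t=0$ is the sum of their derivatives. These are standard facts about Lie groups, parallel to the computations in \cite{GLS,PSTZ}; everything else is a direct unwinding of \eqref{diff-gp-action} and the definition of $d$.
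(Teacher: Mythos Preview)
Your proof is correct and follows essentially the same route as the paper: both compute $(\Phi_D)_*(x)(u)$ as the mixed derivative $\frac{d^2}{dt\,ds}\big|_{t,s=0}\Ad_{D(\exp_\gG(tx))}\Phi(\exp_\gG(tx))(\exp_\gH(su))$ and split it via Leibniz into the $\phi(x)(u)$ and $[d(x),u]_\h$ terms. The only cosmetic difference is that the paper first replaces $D(\exp_\gG(tx))$ by $\exp_\gH(td(x))$ (they agree to first order) and then invokes Eq.~\eqref{eq:expo}, whereas you keep $a(t)=D(\exp_\gG(tx))$ and use $(\Ad)_{*e_\gH}=\ad$ directly; for the second statement the paper simply declares it ``obvious'', which your differentiation of inversion makes explicit.
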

\begin{proof}
	By the commutative diagram in Eq.~\meqref{eq:relation}, we have
	\begin{eqnarray*}
		( \Phi_D)_*(x)(u)&=&\frac{d}{dt}\,\bigg|_{t=0}\Phi_D(\exp_\gG(tx))(u)\\
		&=&\frac{d^2}{dtds}\,\bigg|_{t,s=0}\Phi_D(\exp_\gG(tx))(\exp_\gH(su))\\
		&=&\frac{d^2}{dtds}\,\bigg|_{t,s=0} \Ad_{D(\exp_\gG(tx))}\Phi(\exp_\gG(tx))\exp_\gH(su)   \\
		&=&\frac{d^2}{dtds}\,\bigg|_{t,s=0} \Ad_{\exp_\gH(td(x))}\Phi(\exp_\gG(tx))\exp(su) \\
		&=&\frac{d^2}{dtds}\,\bigg|_{t,s=0}\Phi( \exp_\gG(tx))(\exp_\gH(su))
		+\frac{d^2}{dtds}\,\bigg|_{t,s=0} \Ad_{\exp_\gH(td(x))}\exp_\gH(su) \\
		&=&\phi(x)(u)+[d(x),u]_\h,
	\end{eqnarray*}
	which implies that the differentiation of $\Phi_D$ is exactly $\phi_d.$

It is obvious that the differentiation of the derived Lie group crossed homomorphism $\overline{D}$ is exactly the  derived Lie algebra crossed homomorphism $-d$.
\end{proof}

\subsection{Module and module bialgebra characterizations of Hopf algebra crossed homomorphisms}
\mlabel{ss:modch}

Under the cocommutativity condition, we give another  characterization  of Hopf algebra crossed homomorphisms utilizing derived module structures and module bialgebra structures.

\begin{theorem}\mlabel{th:crosshomo-action}
Let $\hK$ and $\hH$ be Hopf algebras such that $\hH$ is an $\hK$-module algebra via an algebra action $\rightharpoonup:\hK\to \End(\hH)$. A coalgebra homomorphism $\pi:\hK\to \hH$ is a Hopf algebra crossed homomorphism if and only if the action
$$a\cdot_\pi x\coloneqq \pi(a_1)(a_2\rightharpoonup x),\quad \forall x\in \hH,\,a\in \hK,$$
defines an $\hK$-module structure on $\hH$.
\end{theorem}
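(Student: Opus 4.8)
The plan is to check the two defining axioms of a module structure for the map $a\cdot_\pi x=\pi(a_1)(a_2\rightharpoonup x)$ — the unit axiom $1\cdot_\pi x=x$ and the associativity axiom $(ab)\cdot_\pi x=a\cdot_\pi(b\cdot_\pi x)$ — and to match each of them, in each direction, with a piece of the crossed homomorphism condition. Since $\pi$, $\Delta_\hK$, the multiplication of $\hH$ and $\rightharpoonup$ are all linear, $a\cdot_\pi x$ is clearly a well-defined linear map $\hK\otimes\hH\to\hH$, so only these two axioms are at issue. First I would record, for an \emph{arbitrary} coalgebra homomorphism $\pi:\hK\to\hH$ and an \emph{arbitrary} $\hK$-module algebra structure $\rightharpoonup$, the Sweedler-notation expansions
$$1\cdot_\pi x=\pi(1)(1\rightharpoonup x)=\pi(1)x,\qquad (ab)\cdot_\pi x=\pi(a_1b_1)\bigl((a_2b_2)\rightharpoonup x\bigr),$$
the second using $\Delta_\hK(ab)=a_1b_1\otimes a_2b_2$, and, expanding $a_2\rightharpoonup\bigl(\pi(b_1)(b_2\rightharpoonup x)\bigr)$ via the module-algebra rule $c\rightharpoonup(yz)=(c_1\rightharpoonup y)(c_2\rightharpoonup z)$ and then $c\rightharpoonup(d\rightharpoonup x)=(cd)\rightharpoonup x$ together with a coassociativity relabeling,
$$a\cdot_\pi(b\cdot_\pi x)=\pi(a_1)\bigl(a_2\rightharpoonup\pi(b_1)\bigr)\bigl((a_3b_2)\rightharpoonup x\bigr).$$
These identities require nothing of $\pi$ beyond being a coalgebra homomorphism.

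For the forward implication I would assume $\pi$ is a Hopf algebra crossed homomorphism. Then $\pi(1)=1$ by Lemma~\mref{le:crosshomo-anti}~\meqref{it:crossp1}, so the first identity gives $1\cdot_\pi x=x$. For associativity, I would apply Eq.~\meqref{eq:crosshomo-Hopf}, in the form $\pi(uv)=\pi(u_1)(u_2\rightharpoonup\pi(v))$, to the factor $\pi(a_1b_1)$ inside $(ab)\cdot_\pi x$, taking $u=a_1$ and $v=b_1$; a single coassociativity relabeling $(a_1)_1\otimes(a_1)_2\otimes a_2=a_1\otimes a_2\otimes a_3$ then turns $(ab)\cdot_\pi x$ into precisely the displayed form of $a\cdot_\pi(b\cdot_\pi x)$. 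Hence $\cdot_\pi$ is an $\hK$-module structure on $\hH$.

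For the converse I would assume $\cdot_\pi$ is an $\hK$-module structure and extract Eq.~\meqref{eq:crosshomo-Hopf} by evaluating the associativity axiom at $x=1$. Using $c\rightharpoonup1=\vep_\hK(c)1$ and the fact that $\pi$ is a coalgebra homomorphism, the left side $(ab)\cdot_\pi1=\pi(a_1b_1)\vep_\hK(a_2b_2)1$ collapses to $\pi(ab)$, while the right side $a\cdot_\pi(b\cdot_\pi1)=\pi(a_1)\bigl(a_2\rightharpoonup\pi(b_1)\bigr)\vep_\hK(a_3)\vep_\hK(b_2)1$ collapses to $\pi(a_1)(a_2\rightharpoonup\pi(b))$; equating the two yields $\pi(ab)=\pi(a_1)(a_2\rightharpoonup\pi(b))$, i.e. $\pi$ is a crossed homomorphism.

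None of the steps is genuinely hard. The two places that need care are the coassociativity bookkeeping in the forward direction — keeping the multi-index Sweedler sums on the two sides of the associativity axiom aligned after substituting $\pi(a_1b_1)=\pi(a_1)(a_2\rightharpoonup\pi(b_1))$ — and, in the converse direction, recognizing that specializing $x=1$ (rather than some other device) is exactly what peels the crossed homomorphism identity out of module associativity. I expect most of the write-up to be the verification of the preliminary Sweedler identities above, which is routine provided the module-algebra and algebra-action axioms are invoked in the right order.
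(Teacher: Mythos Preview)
Your proposal is correct and follows essentially the same approach as the paper: in the forward direction you substitute the crossed-homomorphism identity into $\pi(a_1b_1)$ to match $(ab)\cdot_\pi x$ with $a\cdot_\pi(b\cdot_\pi x)$, and in the converse you specialize the module-associativity axiom at $x=1$ to recover Eq.~\meqref{eq:crosshomo-Hopf}. Your write-up is slightly more explicit than the paper's in isolating the preliminary Sweedler identities and in verifying the unit axiom $1\cdot_\pi x=x$ via Lemma~\mref{le:crosshomo-anti}~\meqref{it:crossp1}, which the paper leaves implicit.
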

\begin{proof}
If $\pi:\hK\to \hH$ is a crossed homomorphism, then we have
\begin{align*}
ab\cdot_\pi x&= \pi(a_1b_1)(a_2b_2\rightharpoonup x)\\
&=\pi(a_1)(a_2\rightharpoonup \pi(b_1))(a_3\rightharpoonup (b_2\rightharpoonup x))\\
&=\pi(a_1)(a_2\rightharpoonup \pi(b_1)(b_2\rightharpoonup x))
\\&=a\cdot_\pi(b\cdot_\pi x), \quad \forall x\in \hH,\,a,b\in \hK.
\end{align*}
This means that the action $\cdot_\pi$ defines an $\hK$-module structure on $\hH$.

Conversely, if there is the stated $\hK$-module structure $\cdot_\pi$ on $\hH$, then
\begin{align*}
\pi(ab)&=\pi(a_1b_1)(a_2b_2\rightharpoonup 1)=ab\cdot_\pi 1= a\cdot_\pi(b\cdot_\pi 1)\\
&=\pi(a_1)(a_2\rightharpoonup \pi(b_1)(b_2\rightharpoonup 1))
=\pi(a_1)(a_2\rightharpoonup \pi(b_1)\vep_\hK(b_2))\\&=\pi(a_1)(a_2\rightharpoonup \pi(b)),
\quad \forall a,b\in \hK.
\end{align*}
Hence $\pi:\hK\to \hH$ is a crossed homomorphism.
\end{proof}

Note that in the above theorem, even though $\hH$ is an $\hK$-module via $\cdot_\pi$, it is in general not an $\hK$-module algebra action. In the sequel, we will give a new $\hK$-module algebra structure under the cocommutativity condition of $\hK$. The following result is straightforward to check.

\begin{lemma}\mlabel{lemma:smash-modalg}
Let $\hH$ and $\hK$ be Hopf algebras such that $\hK$ is cocommutative and $\hH$ is an $\hK$-module bialgebra via an action $\rightharpoonup$. Then $\hH$ is a $\hH\#\hK$-module algebra with the action defined by
\begin{equation*}
(x\#a).y\coloneqq \ad_x(a\rightharpoonup y),\quad \forall x,y\in \hH,\,a\in \hK.
\end{equation*}
\end{lemma}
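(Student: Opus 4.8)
The plan is to verify directly that the formula $(x\#a).y\coloneqq \ad_x(a\rightharpoonup y)$ makes $\hH$ into a left module over the smash product Hopf algebra $\hH\#\hK$, and that this module action is by algebra endomorphisms of $\hH$. Since the map $(x\#a,y)\mapsto (x\#a).y$ is visibly $\bk$-bilinear, only two things need checking: that the unit $1\#1$ acts as the identity, and that the action respects the multiplication of $\hH\#\hK$, i.e. $\big((x\#a)(y\#b)\big).z=(x\#a).\big((y\#b).z\big)$ for all $x,y\in\hH$, $a,b\in\hK$, $z\in\hH$. For the unit we have $(1\#1).y=\ad_1(1\rightharpoonup y)=\ad_1(y)=y$, using $\ad_1=\id$ and that $\rightharpoonup$ is a unital algebra action. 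For the module-algebra property, one must further check $(x\#a).1=\vep_{\hH\#\hK}(x\#a)\,1$ and $(x\#a).(yz)=\big((x\#a)_1.y\big)\big((x\#a)_2.z\big)$; the first is immediate since $\ad_x(a\rightharpoonup 1)=\ad_x(\vep_\hK(a)1)=\vep_\hK(a)\vep_\hH(x)1$, and the comultiplication of $x\#a$ is $(x_1\#a_1)\otimes(x_2\#a_2)$.

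The computation I would carry out for associativity of the action expands the left-hand side using the multiplication rule of $\hH\#\hK$, namely $(x\#a)(y\#b)=x(a_1\rightharpoonup y)\#a_2b$, so that
\[
\big((x\#a)(y\#b)\big).z=\ad_{x(a_1\rightharpoonup y)}(a_2b\rightharpoonup z),
\]
while the right-hand side is $\ad_x\big(a\rightharpoonup \ad_y(b\rightharpoonup z)\big)$. To match the two sides I would use three facts: that $\ad$ is multiplicative, $\ad_{uv}=\ad_u\ad_v$; that $\rightharpoonup$ is a module \emph{bialgebra} action, so $a\rightharpoonup(pq)=(a_1\rightharpoonup p)(a_2\rightharpoonup q)$ and $\Delta_\hH(a\rightharpoonup x)=(a_1\rightharpoonup x_1)\otimes(a_2\rightharpoonup x_2)$; and the resulting identity $a\rightharpoonup \ad_y(w)=\ad_{a_1\rightharpoonup y}(a_2\rightharpoonup w)$, which follows by writing $\ad_y(w)=y_1wS_\hH(y_2)$, applying $a\rightharpoonup(-)$ through the product, and using compatibility of $\rightharpoonup$ with $S_\hH$ (a consequence of the module bialgebra axioms together with cocommutativity of $\hK$, which guarantees $a_1\rightharpoonup S_\hH(y)\cdot$ reassembles correctly). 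Cocommutativity of $\hK$ is what lets the Sweedler legs of $a$ be permuted freely so that the two sides collapse to the same expression $\ad_x\ad_{a_1\rightharpoonup y}(a_2b\rightharpoonup z)$.

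The only mildly delicate point — and hence the step I would single out as the main obstacle — is the identity $a\rightharpoonup \big(y_1 w S_\hH(y_2)\big)=(a_1\rightharpoonup y_1)(a_2\rightharpoonup w)S_\hH\big((a_3\rightharpoonup y_2)\big)$ after re-indexing, i.e. showing that $\rightharpoonup$ intertwines the adjoint action in the required way; this rests on the compatibility of a module bialgebra action with the antipode, for which cocommutativity of $\hK$ is used, exactly as in the antipode formula for $\hH\#\hK$ recorded before the lemma. Everything else is a routine Sweedler-notation bookkeeping exercise, which is why the paper labels the result "straightforward to check"; I would present the key identity and the expansion of both sides of the associativity check, and leave the remaining symbol-pushing to the reader.
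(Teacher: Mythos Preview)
Your proposal is correct and is exactly the routine verification the paper intends when it says the result is ``straightforward to check'': verify the unit, the module-algebra axioms, and the associativity via $\ad_{uv}=\ad_u\ad_v$ together with the key identity $a\rightharpoonup\ad_y(w)=\ad_{a_1\rightharpoonup y}(a_2\rightharpoonup w)$. One small sharpening: the compatibility $a\rightharpoonup S_\hH(y)=S_\hH(a\rightharpoonup y)$ already follows from the module bialgebra axioms alone (both sides are the convolution inverse of $(a\otimes y)\mapsto a\rightharpoonup y$ in $\Hom_\bk(\hK\otimes\hH,\hH)$), so cocommutativity of $\hK$ enters only at the reindexing step you correctly isolate, where one must swap the middle Sweedler legs of $a$ to pass from $(a_1\rightharpoonup y_1)(a_2\rightharpoonup w)S_\hH(a_3\rightharpoonup y_2)$ to $\ad_{a_1\rightharpoonup y}(a_2\rightharpoonup w)$.
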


\begin{theorem}\mlabel{th:new-action}
Let $\hH$ and $\hK$ be Hopf algebras with $\hK$ cocommutative. Suppose that $\hH$ is an $\hK$-module bialgebra via an action $\rightharpoonup$. Let $\pi:\hK\to \hH$ be a Hopf algebra crossed homomorphism. Then $\hH$ has another $\hK$-module bialgebra structure via the derived action given by
\begin{equation}\mlabel{new-action}
a \rightharpoonup_\pi x\coloneqq \ad_{\pi(a_1)}(a_2\rightharpoonup x),\quad \forall x\in \hH,\,a\in \hK.
\end{equation}
Moreover, $S_\hH\,\pi:\hK\to \hH$ is a Hopf algebra crossed homomorphism with respect to the new action $\rightharpoonup_\pi$, called the {\bf derived crossed homomorphism} of $\pi$.
\end{theorem}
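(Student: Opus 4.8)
The plan is to prove Theorem~\ref{th:new-action} in two stages. First I would establish that $\rightharpoonup_\pi$ is an $\hK$-module bialgebra action, and then verify that $S_\hH\,\pi$ is a crossed homomorphism with respect to it. For the first stage, the cleanest route is to factor the derived action through the smash product: by Lemma~\ref{lemma:smash-modalg}, $\hH$ is an $\hH\#\hK$-module algebra via $(x\#a).y=\ad_x(a\rightharpoonup y)$, and by Corollary~\ref{coro:leftgr} the map $\Psi:\hK\to \Gr_\pi\subseteq \hH\#\hK$, $a\mapsto \pi(a_1)\#a_2$, is a Hopf algebra homomorphism. Then $a\rightharpoonup_\pi x = \ad_{\pi(a_1)}(a_2\rightharpoonup x) = (\pi(a_1)\#a_2).x = \Psi(a).x$ is precisely the pullback along $\Psi$ of the $\hH\#\hK$-module algebra action. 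Since the composite of a Hopf algebra homomorphism with a module algebra action is again a module algebra action, $\hH$ is an $\hK$-module algebra via $\rightharpoonup_\pi$; and because $\hK$ is cocommutative and $\hH$ is an $\hK$-module \emph{bialgebra}, this module algebra action is automatically a module bialgebra action (a module algebra action of a cocommutative Hopf algebra on a bialgebra that is compatible with the counit is a module bialgebra action, using the coalgebra-homomorphism property of $\pi$ to check compatibility with $\Delta_\hH$). Alternatively, one checks the comultiplication compatibility directly: $\Delta_\hH(a\rightharpoonup_\pi x) = \Delta_\hH(\ad_{\pi(a_1)}(a_2\rightharpoonup x))$, expand $\ad_{\pi(a_1)} y = \pi(a_1)_1\, y\, S_\hH(\pi(a_1)_2)$, use that $\pi$ is a coalgebra homomorphism and that $\rightharpoonup$ is a module bialgebra action, then reassemble using cocommutativity of $\hK$.

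For the second stage, I would verify Eq.~\eqref{eq:crosshomo-Hopf} for $\psi := S_\hH\,\pi$ with respect to $\rightharpoonup_\pi$, namely $\psi(ab) = \psi(a_1)(a_2\rightharpoonup_\pi \psi(b))$ for all $a,b\in\hK$. The right-hand side expands to $S_\hH\pi(a_1)\,\ad_{\pi(a_2)}(a_3\rightharpoonup S_\hH\pi(b)) = S_\hH\pi(a_1)\,\pi(a_2)_1\,(a_3\rightharpoonup S_\hH\pi(b))\,S_\hH(\pi(a_2)_2)$. Since $\pi$ is a coalgebra homomorphism, $\pi(a_2)_1\otimes\pi(a_2)_2 = \pi(a_2)\otimes\pi(a_3)$, so after renaming this is $S_\hH\pi(a_1)\,\pi(a_2)\,(a_3\rightharpoonup S_\hH\pi(b))\,S_\hH\pi(a_4)$. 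By Lemma~\ref{le:crosshomo-anti}~\eqref{it:crossp3} (or directly $S_\hH\pi * \pi = \vep_\hK$ in convolution), $S_\hH\pi(a_1)\,\pi(a_2) = \vep_\hK(a_1)1$, collapsing the expression to $(a_1\rightharpoonup S_\hH\pi(b))\,S_\hH\pi(a_2)$. Now I use Lemma~\ref{le:crosshomo-anti}~\eqref{it:crossp2}, which gives $a_1\rightharpoonup \pi S_\hK(a_2) = S_\hH\pi(a)$, equivalently (applying $S_\hH$ and using that $\rightharpoonup$ respects $S_\hH$ on a module bialgebra, or re-deriving) $a_1\rightharpoonup S_\hH\pi(b)$ relates to $\pi S_\hK$-type terms; the goal is to show $(a_1\rightharpoonup S_\hH\pi(b))\,S_\hH\pi(a_2) = S_\hH\pi(ab) = \pi S_\hK(ab)^{\text{-ish}}$. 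Concretely, $S_\hH\pi(ab) = S_\hH(\pi(a_1)(a_2\rightharpoonup\pi(b))) = (a_2\rightharpoonup S_\hH\pi(b))\,S_\hH\pi(a_1)$ by the anti-multiplicativity of $S_\hH$ and the fact that $S_\hH$ is an anti-coalgebra map, combined with $S_\hH$ being $\rightharpoonup$-equivariant up to the standard twist; matching this with $(a_1\rightharpoonup S_\hH\pi(b))\,S_\hH\pi(a_2)$ requires only cocommutativity of $\hK$ (to swap $a_1\leftrightarrow a_2$). This establishes $\psi(ab) = \psi(a_1)(a_2\rightharpoonup_\pi\psi(b))$.

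The main obstacle I anticipate is bookkeeping the antipode identities correctly: $S_\hH$ is an algebra \emph{anti}-homomorphism and a coalgebra anti-homomorphism, and its interaction with the module action $\rightharpoonup$ on a module bialgebra follows the rule $S_\hH(a\rightharpoonup x) = S_\hK(a)\rightharpoonup S_\hH(x)$ only up to the appropriate convention — one must be careful whether it is $S_\hK(a)\rightharpoonup S_\hH(x)$ or $a\rightharpoonup$ something, and the safest approach is to avoid invoking such a rule and instead derive everything from Lemma~\ref{le:crosshomo-anti} (which already packages the needed antipode-$\pi$ interplay) together with the convolution-inverse characterization. Keeping careful track of which Sweedler indices survive after the cancellation $S_\hH\pi(a_1)\pi(a_2)=\vep_\hK(a_1)1$ is the delicate point; everything else is a routine verification once the smash-product factorization $\rightharpoonup_\pi = \Psi^*(\text{action of Lemma~\ref{lemma:smash-modalg}})$ is in place.
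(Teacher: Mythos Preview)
Your proposal is correct and matches the paper's proof: factor $\rightharpoonup_\pi$ as the pullback along $\Psi:\hK\to\Gr_\pi$ of the action of Lemma~\ref{lemma:smash-modalg} (the cocommutativity of $\Gr_\pi\cong\hK$ is what upgrades this from a module algebra to a module \emph{bialgebra} action, so your ``alternatively'' direct check is the right justification rather than the general-sounding parenthetical), and then verify the crossed-homomorphism identity for $S_\hH\pi$ by the same short computation, just run from the opposite end. On your flagged obstacle, the identity you need is simply $S_\hH(a\rightharpoonup x)=a\rightharpoonup S_\hH(x)$, valid in any $\hK$-module bialgebra because both sides are the convolution inverse of $(a\otimes x)\mapsto a\rightharpoonup x$ in $\Hom(\hK\otimes\hH,\hH)$; combined with cocommutativity of $\hK$ this is exactly what the paper uses at its second displayed equality.
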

\begin{proof}
By Corollary~\mref{coro:leftgr}, the graph $\Gr_{\pi}$ is a Hopf subalgebra of
$\hH\#\hK$. Thanks to Lemma~\mref{lemma:smash-modalg} and the cocommutativity of $\hK$, $\hH$ becomes a $\Gr_{\pi}$-module bialgebra.
Pulled back by the Hopf algebra isomorphism $\Psi:\hK\to \Gr_{\pi}$ given in Eq.~\meqref{eq:isom-leftgr}, $\hH$ becomes an $\hK$-module bialgebra via the desired action $\rightharpoonup_\pi$.

On the other hand, for any $a,b\in \hK$,
\begin{align*}
S_\hH(\pi(ab))&=S_\hH(\pi(a_1)(a_2\rightharpoonup \pi(b)))\\
&=(a_1\rightharpoonup S_\hH(\pi(b)))S_\hH(\pi(a_2))\\
&=S_\hH(\pi(a_1))\ad_{\pi(a_2)}(a_3\rightharpoonup S_\hH(\pi(b)))\\
&=S_\hH(\pi(a_1))(a_2\rightharpoonup_\pi S_\hH(\pi(b))).
\end{align*}
Hence, $S_\hH\circ\pi:\hK\to \hH$ is a crossed homomorphism with respect to the action $\rightharpoonup_\pi$.
\end{proof}

From Theorem~\mref{th:new-action} we evidently have
\begin{coro}
 The above derived $\hK$-module bialgebra structure on $\hH$ is consistent with the derived actions on the corresponding Lie algebras and groups as follows.
 \begin{enumerate}
   \item For $x\in P(\hH)$ and $a\in P(\hK)$, we have $a \rightharpoonup_\pi x= [\pi(a),y]+(a\rightharpoonup x)$, which is exactly the derived action given by Eq. \meqref{diff-alg-action} (see also \cite[Lemma 2.6]{PSTZ}).

   \item For $x\in G(\hH)$ and $a\in G(\hK)$, we have $a \rightharpoonup_\pi x=\pi(a)(a\rightharpoonup x)\pi(a)^{-1}$, which is exactly the derived action given by Eq.~\meqref{diff-gp-action}.
 \end{enumerate}
 \mlabel{co:derived}
\end{coro}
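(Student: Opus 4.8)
The plan is to derive both identities by specializing the definition of the derived action $a\rightharpoonup_\pi x=\ad_{\pi(a_1)}(a_2\rightharpoonup x)$ from Theorem~\mref{th:new-action} along the coproduct of a group-like, respectively primitive, element of $\hK$. As a preliminary step I would invoke Proposition~\mref{prop:restrict-cross}, which already records that $\pi$ restricts to a group crossed homomorphism $\pi|_{G(\hK)}:G(\hK)\to G(\hH)$ and to a Lie algebra crossed homomorphism $\pi|_{P(\hK)}:P(\hK)\to P(\hH)$, and that $\rightharpoonup$ restricts to the corresponding action by automorphisms on $G(\hH)$ and by derivations on $P(\hH)$; this is what makes the right-hand sides of Eq.~\meqref{diff-gp-action} and Eq.~\meqref{diff-alg-action} meaningful with $D=\pi|_{G(\hK)}$ and $d=\pi|_{P(\hK)}$.

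In the group-like case, if $a\in G(\hK)$ then $\Delta_\hK(a)=a\otimes a$, so $a\rightharpoonup_\pi x=\ad_{\pi(a)}(a\rightharpoonup x)=\pi(a)(a\rightharpoonup x)S_\hH(\pi(a))$; since $\pi(a)\in G(\hH)$ we have $S_\hH(\pi(a))=\pi(a)^{-1}$, and the expression becomes $\pi(a)(a\rightharpoonup x)\pi(a)^{-1}$, which is precisely $\Phi_D(a)(x)$ from Eq.~\meqref{diff-gp-action}. In the primitive case, if $a\in P(\hK)$ then $\Delta_\hK(a)=a\otimes 1+1\otimes a$, so $a\rightharpoonup_\pi x=\ad_{\pi(a)}(1\rightharpoonup x)+\ad_{\pi(1)}(a\rightharpoonup x)$; using the module-algebra axiom $1\rightharpoonup x=x$, the identity $\pi(1)=1$ from Lemma~\mref{le:crosshomo-anti}~\meqref{it:crossp1}, and the fact that $\pi(a)\in P(\hH)$ --- so that $\ad_{\pi(a)}(x)=\pi(a)x+xS_\hH(\pi(a))=\pi(a)x-x\pi(a)=[\pi(a),x]$ while $\ad_1(a\rightharpoonup x)=a\rightharpoonup x$ --- this collapses to $a\rightharpoonup_\pi x=[\pi(a),x]+(a\rightharpoonup x)$, matching $\phi_d(a)(x)$ from Eq.~\meqref{diff-alg-action} (see also \cite[Lemma 2.6]{PSTZ}).

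I do not expect any real obstacle here: the entire argument is bookkeeping in the Sweedler notation. The only points deserving care are recalling that the antipode inverts group-like elements and negates primitive ones, so that the adjoint action $\ad$ degenerates to conjugation on $G(\hH)$ and to the commutator bracket on $P(\hH)$, and checking that the restricted module structures borrowed from Proposition~\mref{prop:restrict-cross} coincide with the ambient action $\rightharpoonup$, which is automatic since they are literal restrictions. I would close by remarking that, under the Milnor-Moore identifications, the two formulas say exactly that the derived action $\rightharpoonup_\pi$ on $\hH$ restricts on primitive and group-like elements to the derived actions $\phi_d$ and $\Phi_D$ constructed in Section~\mref{ss:derivedlie}.
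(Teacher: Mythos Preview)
Your proposal is correct and is exactly the straightforward specialization the paper has in mind: the paper gives no separate proof, merely stating that the corollary follows ``evidently'' from Theorem~\ref{th:new-action}, and your Sweedler-notation computation for group-like and primitive $a$ is precisely that evident verification. One cosmetic point: the ``$y$'' in the displayed formula of item~(i) is a typo in the statement for ``$x$'', which you silently corrected in your argument.
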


By Proposition \mref{prop:restrict-cross}, the restriction $(S_\hH\,\pi)|_{G(\hK)}$ of the derived crossed homomorphism is a crossed homomorphism on the group $G(\hK)$ with respect to the restriction of the derived action $\rightharpoonup_\pi$, which is exactly the derived crossed homomorphism $\overline{\pi|_{G(\hK)}}$ of the restriction $\pi|_{G(\hK)}$ given in Lemma \mref{lem:da}.

 \section{Structure theorem of pointed cocommutative difference Hopf algebras}
\mlabel{sec:pcdha}
In this section, we first give some characterizations of \difcops on a Hopf algebra. We then study \difcops on smash product Hopf algebras. In particular, we give the structure theorem of pointed cocommutative difference Hopf algebras.

\begin{lemma}
   A coalgebra homomorphism $D:H\to H$ is a   \difcop on $H$  if and only if
\begin{equation}\mlabel{eq:diff-Hopf'}
D(x_1y)x_2=D(x_1)x_2D(y),\quad \forall\,x,y\in H.
\end{equation}
 \end{lemma}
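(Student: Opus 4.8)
The plan is to prove the equivalence by working from the definition Eq.~\meqref{eq:diff-Hopf}, namely $D(xy)=D(x_1)x_2D(y)S(x_3)$, and massaging it into the antipode-free form Eq.~\meqref{eq:diff-Hopf'}. The natural tactic is to use the substitution $x\mapsto x_1$, multiply on the right by $x_2$, and exploit the antipode axiom $x_1 S(x_2)=\vep(x)1$ to cancel the $S(x_3)$ term. Concretely, starting from $D(x_1y)x_2 = D(x_{11})x_{12}D(y)S(x_{13})x_2$ and reindexing the coproduct via coassociativity, the last two factors $S(x_{13})x_{2}$ collapse (after relabelling $\Delta^{(3)}(x)=x_1\otimes x_2\otimes x_3\otimes x_4$ to $\Delta^{(2)}(x)$) to give $D(x_1)x_2D(y)$. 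This yields the forward direction.

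For the converse, I would start from Eq.~\meqref{eq:diff-Hopf'} and recover Eq.~\meqref{eq:diff-Hopf} by multiplying on the right by $S$ applied to an appropriate coproduct leg. Specifically, replace $x$ by $x_1$ in Eq.~\meqref{eq:diff-Hopf'} to get $D(x_1y)x_2 = D(x_1)x_2D(y)$, then multiply both sides on the right by $S(x_3)$: the left side becomes $D(x_1y)x_2 S(x_3) = D(x_1y)\vep(x_2)1 = D(xy)$ using coassociativity and the counit/antipode axioms, while the right side becomes exactly $D(x_1)x_2D(y)S(x_3)$, which is Eq.~\meqref{eq:diff-Hopf}. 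Thus $D$ is a \difcop.

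The only genuine bookkeeping is keeping the Sweedler indices straight under coassociativity when one leg $x$ is split into three or four legs; there is no conceptual obstacle, since $D$ being a coalgebra homomorphism means $\Delta D(z) = D(z_1)\otimes D(z_2)$ and in particular $D$ commutes appropriately with the comultiplications appearing. I expect the main (minor) subtlety to be making sure that in the forward direction we genuinely only need $D$ to be a coalgebra map plus Eq.~\meqref{eq:diff-Hopf}, and that no cocommutativity is secretly used — a quick check shows it is not needed here, since all cancellations are of the form $x_1 S(x_2)=\vep(x)1$ applied to a single tensor leg. I would write both directions as short displayed \texttt{align*} computations, citing only the Hopf algebra axioms and the hypothesis.
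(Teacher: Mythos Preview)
Your proposal is correct and matches the paper's proof essentially line for line: the paper also derives Eq.~\meqref{eq:diff-Hopf'} from Eq.~\meqref{eq:diff-Hopf} by writing $D(x_1y)x_2=D(x_1)x_2D(y)S(x_3)x_4=D(x_1)x_2D(y)$, and conversely obtains $D(xy)=D(x_1y)x_2S(x_3)=D(x_1)x_2D(y)S(x_3)$. The only (harmless) slip in your write-up is that the cancellation in the forward direction uses the antipode identity $S(x_1)x_2=\vep(x)1$, not $x_1S(x_2)=\vep(x)1$.
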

 \begin{proof}
 If Eq.~\meqref{eq:diff-Hopf} holds, then $D(x_1y)x_2=D(x_1)x_2D(y)S(x_3)x_4=D(x_1)x_2D(y)$. Conversely,
if Eq.~\meqref{eq:diff-Hopf'} holds, then $D(xy)=D(x_1y)x_2S(x_3)=D(x_1)x_2D(y)S(x_3)$.
 \end{proof}

\begin{theorem}\mlabel{th:diff-conv}
Let $H$ be a Hopf algebra and  $D:H\longrightarrow H$ a  coalgebra homomorphism.   Then $D$ is a \difcop on $H$ if and only if
the convolution product $D\con\id$ is an algebra homomorphism.
%It is further a Hopf algebra homomorphism when $H$ is cocommutative.
\end{theorem}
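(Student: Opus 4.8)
The plan is to unwind both conditions into identities in $\Hom_\bk(H\otimes H, H)$ and show they are literally the same equation. First I would write $B := D \con \id$, so that $B(x) = D(x_1)x_2$ for all $x\in H$. Since $D$ is a coalgebra homomorphism and $\id$ is a coalgebra homomorphism, $B$ is a coalgebra homomorphism as well (the convolution product of coalgebra maps into a bialgebra is a coalgebra map, because the target is cocommutative-free — actually one only needs that $\Delta$ is an algebra map, which holds in any bialgebra). Also $B(1) = D(1)1 = 1$ by Lemma~\mref{le:crosshomo-anti}~\meqref{it:crossp1} (note $D$, being a \difcop, is in particular a crossed homomorphism with respect to the adjoint action, so $D(1)=1$). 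Hence $B$ is automatically unital, and ``$B$ is an algebra homomorphism'' reduces to the single multiplicativity condition $B(xy) = B(x)B(y)$ for all $x,y\in H$.

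Next I would expand $B(xy)$ using that $B$ is a coalgebra homomorphism: $\Delta(xy) = x_1y_1\otimes x_2y_2$, so
\begin{equation*}
B(xy) = D(x_1y_1)\,x_2y_2, \quad B(x)B(y) = D(x_1)x_2 D(y_1) y_2.
\end{equation*}
Thus ``$B$ multiplicative'' is exactly the identity $D(x_1y_1)x_2y_2 = D(x_1)x_2D(y_1)y_2$ for all $x,y\in H$. I would then show this is equivalent to Eq.~\meqref{eq:diff-Hopf'}, namely $D(x_1y)x_2 = D(x_1)x_2D(y)$. One direction is immediate: given Eq.~\meqref{eq:diff-Hopf'}, apply it with $x$ replaced by $x$ and $y$ replaced by $y_1$, then multiply on the right by $y_2$; since $\Delta$ is coassociative this yields $D(x_1y_1)x_2y_2 = D(x_1)x_2D(y_1)y_2$. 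For the converse, starting from $D(x_1y_1)x_2y_2 = D(x_1)x_2D(y_1)y_2$, I would multiply both sides on the right by $S(y_3)$ (legitimate since the relevant tensor factors are $\ldots \otimes y_2 \otimes y_3 = \Delta^{(2)}$ applied to $y$) and use $y_2 S(y_3) = \vep(y)1$ on the left and $D(y_1)y_2S(y_3) = D(y_1)\vep(y_2)1 = D(y)$ on the right, recovering Eq.~\meqref{eq:diff-Hopf'}. Combined with the earlier Lemma equating Eq.~\meqref{eq:diff-Hopf'} with $D$ being a \difcop, this closes the loop.

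The only genuinely delicate point is verifying that $B = D\con\id$ really is a coalgebra homomorphism, since $D$ is \emph{not} assumed to be an algebra map — but this is a general fact: for any coalgebra maps $f,g\colon C\to A$ with $A$ a bialgebra, $f\con g = m_A(f\otimes g)\Delta_C$ is a coalgebra map because $\Delta_A$ is an algebra map and $\Delta_C$ is coassociative. So there is no real obstacle here; the proof is a direct Sweedler-notation computation once this bookkeeping is in place. I would present it as: (i) reduce ``$B$ is an algebra homomorphism'' to multiplicativity via $B(1)=1$; (ii) expand both sides using that $B$ is a coalgebra map; (iii) match with Eq.~\meqref{eq:diff-Hopf'} via the two short manipulations above; (iv) invoke the preceding Lemma.
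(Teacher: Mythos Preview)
Your core argument is correct and matches the paper's approach almost exactly: expand $(D\con\id)(xy)$ and $(D\con\id)(x)(D\con\id)(y)$, then identify the resulting equality with Eq.~\meqref{eq:diff-Hopf'} via the standard trick of right-convolving with $S$ in the second variable. The paper compresses this last step into one sentence, while you spell it out; that is fine.

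However, there is a genuine error in your write-up, even though it turns out to be inessential. The claim that the convolution of two coalgebra maps $f,g\colon C\to A$ into a bialgebra is again a coalgebra map is \emph{false} without cocommutativity of $C$. Indeed, $\Delta_A((f\con g)(c))=f(c_1)g(c_3)\otimes f(c_2)g(c_4)$, whereas $((f\con g)\otimes(f\con g))\Delta_C(c)=f(c_1)g(c_2)\otimes f(c_3)g(c_4)$; these differ in general (take $f=g=\id$ on a non-cocommutative $H$). You flag this as ``the only genuinely delicate point'' and then dispose of it with a wrong general principle. Fortunately your proof never actually uses that $B$ is a coalgebra map: the expansion $B(xy)=D(x_1y_1)x_2y_2$ follows immediately from the definition $B(z)=D(z_1)z_2$ together with multiplicativity of $\Delta$, not from any coalgebra property of $B$. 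So simply delete the claim and the justification you give for the expansion; the rest stands. (For unitality in the forward direction, your appeal to Lemma~\mref{le:crosshomo-anti}\,\meqref{it:crossp1} is fine; the paper does not discuss $B(1)=1$ explicitly.)
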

\begin{proof}
For any $x,y\in H$, we have
\begin{align*}
&(D\con\id)(xy)=D((xy)_1)(xy)_2=D(x_1y_1)x_2y_2,\\
&(D\con\id)(x)(D\con\id)(y)=D(x_1)x_2D(y_1)y_2.
\end{align*}
Hence, by Eq.~\meqref{eq:diff-Hopf'}, we find that
$$(D\con\id)(xy)=(D\con\id)(x)(D\con\id)(y),\quad \forall x,y\in H,$$
is equivalent to  Eq.~\meqref{eq:diff-Hopf}, giving the desired equivalence.
\end{proof}

\begin{coro}\mlabel{coro:lie-gp-conv}
There exists a set bijection
between $\Dif(\gG)$ of \difcops and $\End(\gG)$ of group endomorphisms for every group $\gG$,
$$\begin{array}{ccc}
\Dif(\gG) & \longrightarrow  & \End(\gG) \\
D & \longmapsto   & (g\mapsto D(g)g)
\end{array},\quad
\begin{array}{ccc}
\End(\gG)& \longrightarrow & \Dif(\gG)\\
F & \longmapsto  & (g \mapsto F(g)g^{-1})
\end{array}.$$
and a set bijection between $\Dif(\frak g)$ of \difcops and $\End(\frak g)$ of Lie algebra endomorphisms
for every Lie algebra $\frak g$,
$$\begin{array}{ccc}
\Dif(\frak g) & \longrightarrow  & \End(\frak g) \\
d & \longmapsto   & (x\mapsto d(x)+x)
\end{array},\quad
\begin{array}{ccc}
\End(\frak g)& \longrightarrow & \Dif(\frak g)\\
f & \longmapsto  & (x \mapsto f(x)-x)
\end{array}.$$
\end{coro}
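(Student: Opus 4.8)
The plan is to deduce both bijections from the convolution characterization of \difcops on Hopf algebras (Theorem~\mref{th:diff-conv}) by passing to the group algebra $\bk\gG$ and to the universal enveloping algebra $U(\frak g)$, which are pointed cocommutative and connected cocommutative Hopf algebras respectively. Observe first that in each of the four displayed assignments the two maps are visibly inverse to each other as maps of sets: $(D(g)g)g^{-1}=D(g)$ and $(F(g)g^{-1})g=F(g)$, and $(d(x)+x)-x=d(x)$ and $(f(x)-x)+x=f(x)$. Hence the whole content is well-definedness, i.e. that each assignment really takes a \difcop to an endomorphism and conversely.

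For the group case I would argue in the following steps. (i) A \difcop $D$ on $\gG$ extends $\bk$-linearly to $\widetilde D\colon\bk\gG\to\bk\gG$; since $D$ permutes the group-like basis, $\widetilde D$ is a coalgebra endomorphism, and evaluating Eq.~\meqref{eq:diff-Hopf} on basis elements shows it is equivalent to $D(gh)=D(g)\Ad_gD(h)$, so $\widetilde D$ is a \difcop on $\bk\gG$. Conversely a \difcop on $\bk\gG$ preserves $G(\bk\gG)=\gG$ (being a coalgebra map) and restricts there to a \difcop on $\gG$ by Corollary~\mref{coro:restrict-diff}; as $\gG$ is a basis, extension and restriction are mutually inverse, giving $\Dif(\gG)\cong\Dif(\bk\gG)$. (ii) By Theorem~\mref{th:diff-conv}, $\widetilde D\in\Dif(\bk\gG)$ iff $\widetilde D\con\id$ is an algebra endomorphism; moreover $\widetilde D\con\id$ is a convolution product of coalgebra maps out of the cocommutative coalgebra $\bk\gG$, hence itself a coalgebra endomorphism, so it is a bialgebra endomorphism. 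Since $\id$ has convolution inverse $S$, the assignment $\widetilde D\mapsto\widetilde D\con\id$ is a bijection from $\Dif(\bk\gG)$ onto the set of bialgebra endomorphisms of $\bk\gG$, with inverse $F\mapsto F\con S$ (using $(D\con\id)\con S=D$ and $(F\con S)\con\id=F$). (iii) Bialgebra endomorphisms of $\bk\gG$ correspond bijectively to group endomorphisms of $\gG$, by restriction to $G(\bk\gG)=\gG$ and linear extension. Composing (i)--(iii) and tracing a group-like element $g$ gives $(\widetilde D\con\id)(g)=\widetilde D(g)g=D(g)g$ in one direction and $(\widetilde F\con S)(g)=F(g)g^{-1}$ in the other, which are exactly the two maps in the statement.

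The Lie algebra case is entirely parallel with $U(\frak g)$ in place of $\bk\gG$: Proposition~\mref{prop:cross-uea} applied to the adjoint action $\ad$ of $\frak g$ (whose lift to $U(\frak g)$ is the Hopf-algebraic adjoint action of $U(\frak g)$ on itself, since the two agree on generators and both are algebra maps $U(\frak g)\to\End(U(\frak g))$), together with Corollary~\mref{coro:restrict-diff}, yields $\Dif(\frak g)\cong\Dif(U(\frak g))$; Theorem~\mref{th:diff-conv} and cocommutativity give a bijection $\Dif(U(\frak g))\cong\{\text{bialgebra endomorphisms of }U(\frak g)\}$ via $\bar d\mapsto\bar d\con\id$ with inverse $F\mapsto F\con S$; and bialgebra endomorphisms of $U(\frak g)$ correspond to Lie algebra endomorphisms of $\frak g$ by restriction to $P(U(\frak g))=\frak g$ and the universal property of $U(\frak g)$. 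Tracing a primitive $x$, and using $\bar d(1)=1$, $S(1)=1$ and $S(x)=-x$, one gets $(\bar d\con\id)(x)=d(x)+x$ and $(\bar f\con S)(x)=f(x)-x$, as claimed.

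I expect the main obstacle to be the two reduction isomorphisms $\Dif(\gG)\cong\Dif(\bk\gG)$ and $\Dif(\frak g)\cong\Dif(U(\frak g))$: one must verify carefully that linear extension produces a coalgebra map satisfying Eq.~\meqref{eq:diff-Hopf}, that restricting a \difcop to the group-like resp. primitive elements is inverse to it (here it is convenient that a \difcop $D$ equals $(D\con\id)\con S$, hence is determined by $D\con\id$, hence by its values on group-like resp. primitive elements), and that the lift of the adjoint action on $\frak g$ coincides with the adjoint action on $U(\frak g)$. There is also a direct route bypassing Hopf algebras altogether: for $D\in\Dif(\gG)$ put $F(g)=D(g)g$ and compute $F(gh)=D(g)gD(h)h=F(g)F(h)$ (with $D(e)=e$ forced by $D(e)=D(e)^2$), and symmetrically for the inverse map; for $d\in\Dif(\frak g)$ put $f=d+\id$ and compute $[f(x),f(y)]=d([x,y])+[x,y]=f([x,y])$ using Eq.~\meqref{crossed-homo-algebra}, and symmetrically. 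I would present the conceptual argument and note the elementary verification in passing.
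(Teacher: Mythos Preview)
Your argument is correct, but note that the paper gives \emph{no proof} for this corollary: it is stated immediately after Theorem~\mref{th:diff-conv} and left to the reader. The intended justification is almost certainly the two-line direct verification you sketch at the end (for groups: $F(gh)=D(gh)gh=D(g)gD(h)g^{-1}gh=F(g)F(h)$; for Lie algebras: $[f(x),f(y)]=[d(x),d(y)]+[d(x),y]+[x,d(y)]+[x,y]=d([x,y])+[x,y]=f([x,y])$), together with the obvious set-theoretic inversion.

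Your main route---passing to $\bk\gG$ resp.\ $U(\frak g)$ and invoking Theorem~\mref{th:diff-conv} plus the coalgebra/cocommutativity arguments---is a genuinely different, more conceptual approach. It is valid and has the merit of exhibiting the group and Lie-algebra statements as shadows of the Hopf-algebraic one (indeed, your steps (ii)--(iii) essentially anticipate Corollary~\mref{coro:diff-conv}). The cost is that you must establish the bijections $\Dif(\gG)\cong\Dif(\bk\gG)$ and $\Dif(\frak g)\cong\Dif(U(\frak g))$, which is more work than the direct check. If you present it this way, I would lead with the elementary verification and then remark that the result also follows from Theorem~\mref{th:diff-conv} applied to $\bk\gG$ and $U(\frak g)$, rather than the reverse.
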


Denote by $\Hop(H)$ the monoid of Hopf algebra endomorphisms of $H$ and $\Aut(H)$ the group of Hopf algebra automorphisms of $H$, and denote by ${\Dif}(H)$ the  set   of \difcops on $H$.

\begin{coro}\mlabel{coro:diff-conv}
Let $H$ be a cocommutative Hopf algebra. We have the following set bijection between $\Dif(H)$ of \difcops and $\Hop(H)$ of Hopf algebra endomorphisms,
$$\begin{array}{ccc}
\Dif(H) & \longrightarrow  & \Hop(H) \\
D & \longmapsto   & D\con\id
\end{array},\quad
\begin{array}{ccc}
\Hop(H)& \longrightarrow & \Dif(H)\\
F & \longmapsto  & F\con S
\end{array}.$$
Via such a bijection, we can define a monoid structure {$(\Dif(H),\star,u\circ\vep)$} by
$$D\star D'\coloneqq((D\con\id)D')\con D = (D(D'\con\id))\con D',\quad\forall D,D'\in\Dif(H).$$
\end{coro}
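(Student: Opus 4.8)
The plan is to verify the two bijections in Corollary~\mref{coro:diff-conv} separately and then check the monoid identities by transport of structure. First I would establish that the maps $D\mapsto D\con\id$ and $F\mapsto F\con S$ are mutually inverse. By Theorem~\mref{th:diff-conv}, if $D$ is a \difcop then $D\con\id$ is an algebra homomorphism; since $D$ is a coalgebra homomorphism and $\id$ is a Hopf algebra endomorphism, $D\con\id$ is automatically a coalgebra homomorphism, so $D\con\id\in\Hop(H)$. Conversely, given $F\in\Hop(H)$, the map $F\con S$ is a coalgebra homomorphism (convolution of two coalgebra maps, using cocommutativity so that the twist is harmless), and applying Theorem~\mref{th:diff-conv} in reverse — or Theorem~\mref{th:crosshomo-action} together with the observation that $(F\con S)\con\id=F\con(S\con\id)=F\con(u\circ\vep)=F$ is an algebra homomorphism — shows $F\con S$ is a \difcop. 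Then $(D\con\id)\con S=D\con(\id\con S)=D\con(u\circ\vep)=D$ and $(F\con S)\con\id=F$ give the two round-trip identities, using associativity of convolution and the antipode axiom $\id\con S=S\con\id=u\circ\vep$.

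Next I would transport the monoid structure of $\Hop(H)$ (composition, with identity $\id$) across this bijection. Writing $\Theta:\Dif(H)\to\Hop(H)$, $\Theta(D)=D\con\id$, the induced operation is $D\star D'=\Theta^{-1}\big(\Theta(D)\circ\Theta(D')\big)=\big((D\con\id)\circ(D'\con\id)\big)\con S$. The task is to simplify this to the two stated expressions. Expanding $\big((D\con\id)\circ(D'\con\id)\big)(x)=(D\con\id)\big(D'(x_1)x_2\big)$ and using that $D\con\id$ is an algebra homomorphism, this equals $(D\con\id)(D'(x_1))\,(D\con\id)(x_2)=\big((D\con\id)D'\big)(x_1)\,x_2\,D(x_3)$ (expanding the second factor), wait — more carefully one gets $\big((D\con\id)D'\con (D\con\id)\big)(x)$, and then convolving with $S$ on the right peels off the trailing $\id$-factor: $\big(G\con(D\con\id)\big)\con S=G\con D$ for any map $G$, by associativity and $\id\con S=u\circ\vep$. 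This yields $D\star D'=\big((D\con\id)D'\big)\con D$. The second formula $(D(D'\con\id))\con D'$ follows symmetrically, by instead expanding $\Theta(D)\circ\Theta(D')$ as $\Theta(D)$ applied after $\Theta(D')$ and grouping differently, or by directly checking both expressions define the same element via $\Theta$; cocommutativity of $H$ is what allows the bookkeeping of Sweedler legs to go through. Finally, the identity element: $u\circ\vep$ corresponds under $\Theta$ to $(u\circ\vep)\con\id=\id$, the identity of $\Hop(H)$, so $(\Dif(H),\star,u\circ\vep)$ is a monoid.

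The main obstacle I anticipate is the Sweedler-notation manipulation in identifying $\big((D\con\id)\circ(D'\con\id)\big)\con S$ with the two closed forms: one must be careful that $D\con\id$ is being used both as an algebra homomorphism (to split it across a product) and as a coalgebra homomorphism (to track comultiplication), and the repeated use of $\id\con S=u\circ\vep$ to cancel telescoping $\id$-legs has to be applied at the right spot. Cocommutativity of $H$ is essential here — without it $F\con S$ need not even be a coalgebra map, and the two expressions for $D\star D'$ need not agree — so I would flag explicitly where it enters. Everything else (that convolution is associative, that $u\circ\vep$ is its unit, that composites and convolutions of Hopf algebra endomorphisms stay in $\Hop(H)$) is routine and can be invoked without detailed calculation.
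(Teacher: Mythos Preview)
Your proposal is correct and follows essentially the same route as the paper: establish the bijection via $D\mapsto D\con\id$ and $F\mapsto F\con S$ using Theorem~\mref{th:diff-conv} together with cocommutativity (so that convolutions of coalgebra maps remain coalgebra maps), then compute $D\star D'=\big((D\con\id)\circ(D'\con\id)\big)\con S$ by using that $D\con\id$ is an algebra homomorphism to rewrite the composite as $((D\con\id)D')\con(D\con\id)$ and peel off the trailing $\id$ via $\id\con S=u\circ\vep$, with the second expression obtained by instead using that $D'\con\id$ is a coalgebra homomorphism. The paper's proof does exactly this, with the same two short convolution chains; your ``wait'' self-correction lands on precisely the identity $F\circ(G\con G')=(FG)\con(FG')$ for $F$ an algebra map (and its coalgebra-side dual) that the paper uses implicitly.
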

\begin{proof}
When $H$ is cocommutative, $D\con\id$ is also a coalgebra homomorphism for $D\in\Dif(H)$.  By Theorem \mref{th:diff-conv}, $D\con\id$ is a Hopf algebra homomorphism. Then
the desired bijection follows from the fact that the antipode $S$ is the convolution inverse of the identity map $\id$.

For any $D,D'\in\Dif(H)$, applying this bijection we obtain
\begin{align*}
D\star D'&= ((D\con\id)(D'\con\id))\con S\\
&= (((D\con\id)D')\con (D\con\id))\con S\\
&= (((D\con\id)D')\con D)\con (\id\con S)\\
&= ((D\con\id)D')\con D,
\end{align*}
or
\begin{align*}
D\star D'&= ((D\con\id)(D'\con\id))\con S\\
&= ((D(D'\con\id))\con (D'\con\id))\con S\\
&= ((D(D'\con\id))\con D')\con (\id\con S)\\
&= (D(D'\con\id))\con D',
\end{align*}
and $u\circ \vep$ serves as the unit of this monoid.
\end{proof}

\begin{prop}\mlabel{prop:aut-diff}
For a Hopf algebra $H$, $\Aut(H)$ acts on $\Dif(H)$ by conjugation.
\end{prop}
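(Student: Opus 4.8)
The plan is to reduce the statement to Theorem~\mref{th:diff-conv}, which holds for an arbitrary Hopf algebra and characterizes the \difcops on $H$ as exactly those coalgebra homomorphisms $D$ for which the convolution product $D\con\id$ is an algebra homomorphism.

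Fix $\sigma\in\Aut(H)$ and $D\in\Dif(H)$ and set $D^\sigma\coloneqq\sigma\, D\,\sigma^{-1}$. Since $\sigma$ and $\sigma^{-1}$ are Hopf algebra automorphisms, in particular coalgebra homomorphisms, the composite $D^\sigma$ is again a coalgebra homomorphism. The key step is the identity
\begin{equation*}
D^\sigma\con\id=\sigma\,(D\con\id)\,\sigma^{-1},
\end{equation*}
which I would verify by evaluating both sides at $x\in H$: the left side is $(\sigma D\sigma^{-1})(x_1)\,x_2$, while for the right side one uses that $\sigma^{-1}$ is a coalgebra map, so that $\sigma^{-1}(x)_1\otimes\sigma^{-1}(x)_2=\sigma^{-1}(x_1)\otimes\sigma^{-1}(x_2)$, together with the fact that $\sigma$ is an algebra map, to get $\sigma\big(D(\sigma^{-1}(x_1))\,\sigma^{-1}(x_2)\big)=(\sigma D\sigma^{-1})(x_1)\,x_2$, the same expression. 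Now, since $D\in\Dif(H)$, Theorem~\mref{th:diff-conv} gives that $D\con\id$ is an algebra homomorphism, and conjugating by the algebra automorphisms $\sigma$ and $\sigma^{-1}$ keeps it an algebra homomorphism; hence $D^\sigma\con\id$ is an algebra homomorphism. Applying Theorem~\mref{th:diff-conv} once more, now to the coalgebra homomorphism $D^\sigma$, we conclude $D^\sigma\in\Dif(H)$.

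It then remains to verify that $\sigma\mapsto(D\mapsto D^\sigma)$ is a group action, which is immediate: $D^{\id_H}=D$, and $D^{\sigma\tau}=(\sigma\tau)D(\sigma\tau)^{-1}=\sigma(\tau D\tau^{-1})\sigma^{-1}=(D^\tau)^\sigma$ by associativity of composition and $(\sigma\tau)^{-1}=\tau^{-1}\sigma^{-1}$, so we obtain a left action of $\Aut(H)$ on $\Dif(H)$.

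There is no essential difficulty in this argument; the only points deserving attention are checking the hypothesis of Theorem~\mref{th:diff-conv} (that $D^\sigma$ is a coalgebra homomorphism) and the Sweedler bookkeeping in the displayed identity. As an alternative to the convolution argument, one can verify $D^\sigma\in\Dif(H)$ directly from Eq.~\meqref{eq:diff-Hopf}: putting $a=\sigma^{-1}(x)$ and $b=\sigma^{-1}(y)$, expand $D^\sigma(xy)=\sigma\big(D(a_1)\,a_2\,D(b)\,S(a_3)\big)$ and then use that $\sigma$ is an algebra homomorphism, commutes with the antipode, and is a coalgebra homomorphism (so $\sigma(a_1)\otimes\sigma(a_2)\otimes\sigma(a_3)=x_1\otimes x_2\otimes x_3$) to rewrite it as $D^\sigma(x_1)\,x_2\,D^\sigma(y)\,S(x_3)$; the convolution route is preferable since it avoids the triple coproduct.
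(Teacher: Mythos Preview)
Your proof is correct and follows essentially the same route as the paper: both use that $\sigma D\sigma^{-1}$ is a coalgebra homomorphism, establish the identity $(\sigma D\sigma^{-1})\con\id=\sigma(D\con\id)\sigma^{-1}$, and invoke Theorem~\mref{th:diff-conv}. Your version simply supplies more detail (the Sweedler bookkeeping, the group-action axioms, and an alternative direct verification), while the paper leaves these as routine.
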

\begin{proof}
Since $\End(H)$ is an $\Aut(H)$-module by conjugation, it is enough to show that
$$\sigma D\sigma^{-1}\in \Dif(H),\quad\forall \sigma\in\Aut(H),D\in \Dif(H).$$
Indeed, $\sigma D\sigma^{-1}$ is still a coalgebra homomorphism, and
$$(\sigma D\sigma^{-1})\con\id=\sigma(D\con\id)\sigma^{-1}$$
is an algebra homomorphism by Theorem~\mref{th:diff-conv}. Thus $\sigma D\sigma^{-1}$ is in $\Dif(H)$.
\end{proof}

The characterization of \difcops in Theorem \mref{th:diff-conv}
inspires us to give the following notion of \difc module bialgebras over \difchopfs.
\begin{defn}
Let $(\hH,D_\hH)$ and $(\hK,D_\hK)$ be cocommutative \difchopfs. The pair $(\hH,D_\hH)$ is called a {\bf \difc $(\hK,D_\hK)$-module bialgebra} via an action $\rightharpoonup:\hK\to \End(\hH)$, if $K$ is an $\hK$-module bialgebra via $\rightharpoonup$ such that
\begin{equation}\mlabel{eq:diff-mod-bialg}
(D_K\con\id)(a\rightharpoonup x)=(D_\hK\con\id)a\rightharpoonup (D_K\con\id)x,\quad\forall x\in \hH,\, a\in \hK.
\end{equation}
Namely, the following equality holds.
\begin{equation}\mlabel{eq:smash-diff}
D_K(a_1\rightharpoonup x_1)(a_2\rightharpoonup x_2)=D_\hK(a_1)a_2\rightharpoonup D_K(x_1)x_2,\quad\forall x\in \hH,\, a\in \hK.
\end{equation}
\end{defn}

\begin{exam}
(i) Any cocommutative \difchopf $(H,D_H)$ is a \difc module bialgebra over itself via the adjoint action.

(ii) Let  $(\gG,D_\gG)$ be a \difc group and $(\frak g,D_\g)$ a \difc Lie algebra. Let    $\rightharpoonup$ be an action  of $\gG$ on $\frak g$ as an automorphism group of Lie algebras. Then $U(\frak g)$ endowed with the extended \difcop, which is also denoted by  $D_\g$, is a difference Hopf algebra, and $\bk \gG$ endowed with the extended \difcop, which is also denoted by  $D_\gG$, is a difference Hopf algebra. Moreover, $(U(\g),D_\g)$ becomes a \difc $(\bk \gG,D_\gG)$-module bialgebra, if and only if
\begin{equation*}
(D_{\frak g}+\id)(g\rightharpoonup x)=D_\gG(g)g\rightharpoonup(D_{\frak g}+\id)x,\quad\forall x\in \frak g,\, g\in \gG.
\end{equation*}
Indeed, it is straightforward to verify Eq.~\meqref{eq:smash-diff} here by induction on the degree of any $u\in U(\frak g)$.
\end{exam}

Next we study \difcops on the smash product of Hopf algebras, and give the following extension theorem of \difcops on cocommutative Hopf algebras.

\begin{theorem}\mlabel{th:smash-diff}
Let $(\hK,D_\hK)$ and $(\hH,D_\hH)$ be cocommutative \difchopfs such that $\hH$ is an $\hK$-module bialgebra via an action $\rightharpoonup$. The smash product Hopf algebra $\hH\#\hK$ has the unique \difcop $D$ such that $D|_\hH=D_\hH$ and $D|_\hK=D_\hK$ if and only if
$(\hH,D_\hH)$ is a \difc $(\hK,D_\hK)$-module bialgebra.
\end{theorem}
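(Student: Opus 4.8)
The plan is to transport the statement, through Corollary~\mref{coro:diff-conv}, into the language of Hopf algebra endomorphisms. Recall that for a cocommutative Hopf algebra $A$ that corollary produces a bijection $\Dif(A)\cong\Hop(A)$, $D\mapsto D\con\id$, with inverse $F\mapsto F\con S_A$. Since $\hH$ and $\hK$ are cocommutative, so is $A:=\hH\#\hK$, and the corollary applies to $A$ as well as to $\hH$ and $\hK$; set $F_\hH:=D_\hH\con\id_\hH\in\Hop(\hH)$ and $F_\hK:=D_\hK\con\id_\hK\in\Hop(\hK)$. The point to keep in mind is that Eq.~\meqref{eq:diff-mod-bialg} is precisely the identity $F_\hH(a\rightharpoonup x)=F_\hK(a)\rightharpoonup F_\hH(x)$ for all $a\in\hK$, $x\in\hH$. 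I will also use freely that $\hH\#1$ and $1\#\hK$ are sub-Hopf-algebras of $A$ that generate it as an algebra, with cross relation $(1\#a)(x\#1)=(a_1\rightharpoonup x)\#a_2$, and that $S_A(x\#1)=S_\hH(x)\#1$ and $S_A(1\#a)=1\#S_\hK(a)$, both immediate from the antipode formula for $\hH\#\hK$.

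For the ``if'' direction, assume Eq.~\meqref{eq:diff-mod-bialg} holds. I would define the linear map $F:=F_\hH\ot F_\hK\colon A\to A$, so that $F(x\#a)=F_\hH(x)\#F_\hK(a)$. Since the comultiplication and counit of $A$ are the tensor ones and $F_\hH,F_\hK$ are coalgebra maps, $F$ is automatically a coalgebra endomorphism. For multiplicativity, expanding gives $F\big((x\#a)(y\#b)\big)=F_\hH(x)F_\hH(a_1\rightharpoonup y)\#F_\hK(a_2)F_\hK(b)$, whereas, using that $F_\hK$ is a coalgebra map, $F(x\#a)F(y\#b)=F_\hH(x)\big(F_\hK(a_1)\rightharpoonup F_\hH(y)\big)\#F_\hK(a_2)F_\hK(b)$; the two coincide once one knows $F_\hH(a_1\rightharpoonup y)\ot F_\hK(a_2)=\big(F_\hK(a_1)\rightharpoonup F_\hH(y)\big)\ot F_\hK(a_2)$, and this is just $(T_1\ot F_\hK)\Delta_\hK(a)=(T_2\ot F_\hK)\Delta_\hK(a)$ for the linear maps $T_1(b):=F_\hH(b\rightharpoonup y)$ and $T_2(b):=F_\hK(b)\rightharpoonup F_\hH(y)$, which are equal by Eq.~\meqref{eq:diff-mod-bialg}. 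Hence $F\in\Hop(A)$, so $D:=F\con S_A\in\Dif(A)$ by Corollary~\mref{coro:diff-conv}. Finally, using $S_A(x\#1)=S_\hH(x)\#1$ one computes $D(x\#1)=F_\hH(x_1)S_\hH(x_2)\#1=(F_\hH\con S_\hH)(x)\#1=D_\hH(x)\#1$ (the last equality by the inverse bijection of Corollary~\mref{coro:diff-conv}), and similarly $D(1\#a)=1\#D_\hK(a)$; thus $D|_\hH=D_\hH$ and $D|_\hK=D_\hK$, which gives the existence half of the ``if'' direction, uniqueness being established next.

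For the ``only if'' direction and uniqueness, suppose $D\in\Dif(A)$ with $D|_\hH=D_\hH$ and $D|_\hK=D_\hK$, and put $F:=D\con\id\in\Hop(A)$ (Corollary~\mref{coro:diff-conv}, using cocommutativity of $A$). Then $F(x\#1)=D_\hH(x_1)x_2\#1=F_\hH(x)\#1$ and $F(1\#a)=1\#F_\hK(a)$, so $F$ agrees with $F_\hH\ot F_\hK$ on the algebra generators $\hH\#1$ and $1\#\hK$; being an algebra map, $F$ is thereby determined, hence so is $D=F\con S_A$, which gives the uniqueness assertion in both directions. Now apply $F$ to the cross relation $(1\#a)(x\#1)=(a_1\rightharpoonup x)\#a_2$: multiplicativity of $F$ and the coalgebra property of $F_\hK$ yield, in $\hH\ot\hK$, the equality $\big(F_\hK(a_1)\rightharpoonup F_\hH(x)\big)\#F_\hK(a_2)=F_\hH(a_1\rightharpoonup x)\#F_\hK(a_2)$; applying $\id_\hH\ot\vep_\hK$ and $\vep_\hK F_\hK=\vep_\hK$ collapses this to $F_\hK(a)\rightharpoonup F_\hH(x)=F_\hH(a\rightharpoonup x)$, i.e.\ Eq.~\meqref{eq:diff-mod-bialg}. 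Therefore $(\hH,D_\hH)$ is a difference $(\hK,D_\hK)$-module bialgebra, completing the equivalence.

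The step I expect to be the main obstacle is the multiplicativity check for $F$ in the ``if'' direction: one must carry along the two Sweedler strands produced by the smash-product multiplication alongside the extra one used to pass to the coalgebra-twisted form of Eq.~\meqref{eq:diff-mod-bialg}, and see clearly that this single identity is \emph{exactly} what is required --- no more, no less. Everything else (the coalgebra/counit compatibilities, the identification of the restrictions $D|_\hH$ and $D|_\hK$, and the uniqueness) is formal and uses only that $F_\hH$ and $F_\hK$ are coalgebra maps, which is where the cocommutativity of $\hH$ and $\hK$ enters.
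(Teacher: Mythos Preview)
Your proof is correct and follows essentially the same route as the paper: both arguments pass through the bijection $\Dif\leftrightarrow\Hop$ of Corollary~\mref{coro:diff-conv}, reduce the question to whether $F_\hH\ot F_\hK$ is multiplicative on $\hH\#\hK$, and identify this multiplicativity with Eq.~\meqref{eq:diff-mod-bialg}. The only cosmetic difference is that the paper first writes down an explicit formula for $D$ and then computes $D\con\id=F_\hH\ot F_\hK$, whereas you define $F$ directly and recover $D=F\con S_A$; similarly, the paper argues uniqueness from Eq.~\meqref{eq:diff-Hopf} while you argue it from $F$ being determined on algebra generators.
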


\begin{proof}
First suppose that the pair $(D_\hH,D_\hK)$ can be extended to a \difcop $D$ on $\hH\#\hK$. By Corollary~\mref{coro:diff-conv} $D_\hH\con\id$, $D_\hK\con\id$ and $D\con\id$ are all Hopf algebra homomorphisms. Then for any $x\in \hH,\, a\in \hK$,
\begin{align*}
(D\con\id)((1\#a)(x\#1))&=(D\con\id)((a_1\rightharpoonup x)\#a_2)\\
&=(D\con\id)(((a_1\rightharpoonup x)\#1)(1\#a_2))\\
&=(D\con\id)((a_1\rightharpoonup x)\#1)(D\con\id)(1\#a_2)\\
&=((D_\hH\con\id)(a_1\rightharpoonup x)\#1)(1\#(D_\hK\con\id)a_2)\\
&=(D_\hH\con\id)(a_1\rightharpoonup x)\#(D_\hK\con\id)a_2.
\end{align*}
On the other hand,
\begin{align*}
(D\con\id)((1\#a)(x\#1))&=(D\con\id)(1\#a)(D\con\id)(x\#1)\\
&=(1\#(D_\hK\con\id)a)((D_\hH\con\id)x\#1)\\
&=((D_\hK\con\id)a_1\rightharpoonup (D_\hH\con\id)x) \# (D_\hK\con\id)a_2.
\end{align*}
Applying $\id\otimes\vep_\hK$ to both equalities, we conclude that Eq.~\meqref{eq:diff-mod-bialg} holds, and $(\hH,D_\hH)$ is a \difc $\hK$-module bialgebra.

\smallskip
Conversely, assume that Eq.~\meqref{eq:diff-mod-bialg} holds. We define a linear operator $D$ on $\hH\#\hK$ by
$$ D(x\#a)\coloneqq D_\hH(x_1)x_2(D_\hK(a_1)\rightharpoonup S_\hH(x_3))\# D_\hK(a_2),\quad\forall x\in \hH,\, a\in \hK.$$
Then clearly $D|_\hH=D_\hH$ and $D|_\hK=D_\hK$, and we have
\begin{align*}
(D\con\id)(x\#a) &= D(x_1\# a_1)(x_2\# a_2)\\
&=\big(D_\hH(x_1)x_2(D_\hK(a_1)\rightharpoonup S_\hH(x_3))\# D_\hK(a_2)\big)(x_4\# a_3)\\
&=D_\hH(x_1)x_2(D_\hK(a_1)\rightharpoonup S_\hH(x_3))(D_\hK(a_2)\rightharpoonup x_4)\# D_\hK(a_3)a_4\\
&=D_\hH(x_1)x_2(D_\hK(a_1)\rightharpoonup S_\hH(x_3)x_4) \# D_\hK(a_2)a_3\\
&=D_\hH(x_1)x_2 \# D_\hK(a_1)a_2\\
&=(D_\hH\con\id)x\#(D_\hK\con\id)a.
\end{align*}
Also, since both $D_\hH$ and $D_\hK$ are coalgebra homomorphisms, $\hH$ is a cocommutative $\hK$-module bialgebra and $\hK$ is also cocommutative, we conclude that $D$ is a coalgebra homomorphism.

For $x,y\in \hH$ and $a,b\in \hK$, we have
\begin{align*}
(D\con\id)((x\#a)(y\#b))&=(D\con\id)(x(a_1\rightharpoonup y)\#a_2b)\\
&=(D_\hH\con\id)(x(a_1\rightharpoonup y))\# (D_\hK\con\id)(a_2b)\\
&=(D_\hH\con\id)x(D_\hH\con\id)(a_1\rightharpoonup y)\# (D_\hK\con\id)a_2(D_\hK\con\id)b\\
&=(D_\hH\con\id)x((D_\hK\con\id)a_1\rightharpoonup (D_\hH\con\id)y)\# (D_\hK\con\id)a_2(D_\hK\con\id)b\\
&=\big((D_\hH\con\id)x\# (D_\hK\con\id)a\big) \big((D_\hH\con\id)y\#(D_\hK\con\id)b\big)\\
&=(D\con\id)(x\#a)(D\con\id)(y\#b),
\end{align*}
where the fourth equality uses Eq.~\meqref{eq:diff-mod-bialg}.
Hence, $D\con\id$ is an algebra homomorphism, and $D$ is a \difcop on $\hH\#\hK$ such that $D|_\hH=D_\hH$ and $D|_\hK=D_\hK$ by Theorem~\mref{th:diff-conv}.

On the other hand, $D(x\#a)=D((x\#1)(1\#a))$ is determined by $D(x\#1)=D_\hH(x)\#1$ and $D(1\#a)=1\#D_\hK(a)$ via Eq.~\meqref{eq:diff-Hopf}, showing that such extension of \difcops is unique.
\end{proof}

\begin{remark}
In the context of Theorem~\mref{th:smash-diff},
when $\hK$ acts trivially on $\hH$, the compatibility condition in Eq.~\meqref{eq:diff-mod-bialg} holds automatically. Then the tensor product $(\hH\otimes \hK, D_\hH\otimes D_\hK)$ of \difchopfs is also a \difchopf.
\end{remark}

Let $\Phi:\gG\to\Aut(\gH)$ be an action of a group $\gG$  on  $\gH$. Then $\Phi$ can be linearly extended to a module bialgebra action  $\bar\Phi:\bk \gG\to \End(\bk \gH)$ by
$$
\bar\Phi\Big(\sum_{g\in \gG}a_g g\Big)\Big(\sum_{h\in \gH}b_h h\Big)=\sum_{g\in \gG, h\in \gH}a_gb_h \Phi(g)(h).
$$
Likewise, a group crossed homomorphism can be linearly extended to a Hopf algebra crossed homomorphism on the group Hopf algebras.

By Theorem~\mref{th:smash-diff}, for cocommutative \difchopfs $(\hH,D_\hH)$ and $(\hK,D_\hK)$ such that $(\hH,D_\hH)$ is a \difc $(\hK,D_\hK)$-module bialgebra, one can define
the {\bf smash product \difchopf} $(\hH\#\hK,D)$ such that $D|_\hH=D_\hH$ and $D|_\hK=D_\hK$.

On the other hand, the structure theorem of pointed cocommutative Hopf algebras, called the {\bf Cartier-Kostant-Milnor-Moore theorem}, states that such a Hopf algebra $H$ is isomorphic to the smash product Hopf algebra $U(P(H))\#\bk G(H)$
of a universal enveloping algebra and a group algebra; see e.g.~\cite[Theorem~15.3.4]{Ra}.

Next we strengthen this theorem to a structure theorem of pointed cocommutative \difchopfs.

\begin{theorem} \mlabel{thm:dckmmt}
	$(${\bf Difference Cartier-Kostant-Milnor-Moore Theorem}$)$
A pointed cocommutative \difchopf $(H,D)$ is isomorphic to
the smash product \difchopf $U(P(H))\#\bk G(H)$, where
$U(P(H))$ is the \difc $\bk G(H)$-module bialgebra induced by the conjugation action of $G(H)$ on $P(H)$.

\end{theorem}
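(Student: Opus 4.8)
The plan is to deduce the statement from the classical Cartier-Kostant-Milnor-Moore theorem together with the extension result for \difcops on smash products, Theorem~\mref{th:smash-diff}. Start from the classical Hopf algebra isomorphism $\theta\colon U(P(H))\#\bk G(H)\to H$ (see \cite[Theorem~15.3.4]{Ra}), under which $U(P(H))$ is identified with $U(P(H))\#1$, $\bk G(H)$ with $1\#\bk G(H)$, $P(H)$ with $P(H)\#1$, $G(H)$ with $1\#G(H)$, and whose underlying module bialgebra action of $\bk G(H)$ on $U(P(H))$ is the linear extension of the conjugation action $g\rightharpoonup x=gxg^{-1}$ of $G(H)$ on $P(H)$ inside $H$. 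Transport $D$ along $\theta$ to $\widetilde D\coloneqq \theta^{-1}D\theta$ on $U(P(H))\#\bk G(H)$; since $\theta$ is a Hopf algebra isomorphism and hence intertwines comultiplication, counit and antipode, a direct check shows that $\widetilde D$ is again a coalgebra homomorphism satisfying Eq.~\meqref{eq:diff-Hopf}, so $\widetilde D$ is a \difcop. It then suffices to identify $\widetilde D$ with the \difcop produced by Theorem~\mref{th:smash-diff}.

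First I would verify that $\widetilde D$ restricts to \difcops on the Hopf subalgebras $U(P(H))$ and $\bk G(H)$. By Corollary~\mref{coro:restrict-diff}, $D$ sends $P(H)$ into $P(H)$ and $G(H)$ into $G(H)$, so $\widetilde D$ preserves $P(H)\#1$ and $1\#G(H)$. Since $\bk G(H)$ is the linear span of $G(H)$ and $\widetilde D$ is linear, $\widetilde D(\bk G(H))\subseteq\bk G(H)$, and $D_G\coloneqq\widetilde D|_{\bk G(H)}$ is the linear extension of the group \difcop $D|_{G(H)}$. For $U(P(H))$ I would induct along the coradical filtration using the identity $\widetilde D(xy)=\widetilde D(x_1)x_2\widetilde D(y)S(x_3)$ from Eq.~\meqref{eq:diff-Hopf}: if $x,y\in U(P(H))$, then $x_1,x_2,x_3\in U(P(H))$ and $\widetilde D(y)\in U(P(H))$ by induction, whence $\widetilde D(xy)\in U(P(H))$. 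Thus $D_P\coloneqq\widetilde D|_{U(P(H))}$ is a \difcop on $U(P(H))$, and by the uniqueness of the lift in Proposition~\mref{prop:cross-uea} (equivalently Corollary~\mref{coro:mm-difc}) it equals the canonical extension $\overline{D|_{P(H)}}$ of the Lie algebra \difcop $D|_{P(H)}$.

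Then I would apply Theorem~\mref{th:smash-diff} with the acting \difchopf $(\bk G(H),D_G)$ and the candidate \difc module bialgebra $(U(P(H)),D_P)$. The operator $\widetilde D$ is a \difcop on $U(P(H))\#\bk G(H)$ restricting to $D_P$ on $U(P(H))$ and to $D_G$ on $\bk G(H)$, so the ``only if'' direction of that theorem shows that $(U(P(H)),D_P)$ is a \difc $(\bk G(H),D_G)$-module bialgebra relative to the conjugation action, i.e.\ the compatibility Eq.~\meqref{eq:diff-mod-bialg} holds. The ``if'' direction then provides a \emph{unique} \difcop on the smash product restricting to $D_P$ and $D_G$, which must therefore equal $\widetilde D$. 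Consequently $\theta$ is an isomorphism of \difchopfs from the smash product \difchopf $(U(P(H))\#\bk G(H),\widetilde D)$ onto $(H,D)$, where $U(P(H))$ carries the \difc $\bk G(H)$-module bialgebra structure induced by the conjugation action of $G(H)$ on $P(H)$, which is exactly the assertion.

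The step I expect to require the most care is the second one: showing that $D$ genuinely restricts to the Hopf subalgebras $U(P(H))$ and $\bk G(H)$, which combines the preservation of primitives and group-likes from Corollary~\mref{coro:restrict-diff} with the multiplicative rule Eq.~\meqref{eq:diff-Hopf} in a coradical-filtration induction, together with the bookkeeping that the $\bk G(H)$-module bialgebra action coming out of the classical decomposition of $H$ is literally the conjugation action named in the statement. Once these identifications are in place, the compatibility Eq.~\meqref{eq:diff-mod-bialg} and the uniqueness of the extended \difcop are both delivered by Theorem~\mref{th:smash-diff}, so no further computation is needed. (Alternatively, one may bypass the ``only if'' direction and check Eq.~\meqref{eq:diff-mod-bialg} directly: since $D\con\id$ is a Hopf algebra endomorphism of $H$ by Corollary~\mref{coro:diff-conv} and commutes with the antipode, $(D\con\id)(gxg^{-1})=\big((D\con\id)g\big)\big((D\con\id)x\big)\big((D\con\id)g\big)^{-1}$, which restricts to the desired identity.)
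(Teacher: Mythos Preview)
Your proposal is correct and takes essentially the same approach as the paper: transport along the classical Cartier--Kostant--Milnor--Moore isomorphism, restrict $D$ via Corollary~\mref{coro:restrict-diff} and Corollary~\mref{coro:mm-difc}, and then appeal to Theorem~\mref{th:smash-diff}. The only difference is one of economy: the paper verifies the compatibility Eq.~\meqref{eq:diff-mod-bialg} and the intertwining $\Phi D_\#=D\Phi$ by direct computation (your parenthetical alternative is close to this), whereas you extract both conclusions from the two directions and the uniqueness clause of Theorem~\mref{th:smash-diff}, avoiding those explicit checks.
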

\begin{proof}
Let $\gG\coloneqq G(H)$ with $D_\gG\coloneqq D|_\gG$, and $\frakg\coloneqq P(H)$ with $D_\frakg\coloneqq D|_\frakg$. By Corollary~\mref{coro:restrict-diff},  $(\gG,D_\gG)$ is a \difc group and $(\frakg,D_\frakg)$ is a \difc Lie algebra.

By Proposition \mref{prop:cross-uea}, the \difcop $D_{\frak g}$ of $\frak g$ can be uniquely extended to $U(\frak g)$. On the other hand, the \difcop $D_\gG$ on $\gG$ is linearly extended to $\bk \gG$. We will use the same notation $D_{\frak g}$ (resp. $D_\gG$) for such extended \difcop on $U(\frak g)$ (resp. $\bk \gG$).

By the Cartier-Kostant-Milnor-Moore theorem, there exists a Hopf algebra isomorphism
$$\Phi:U(\frak g)\#\bk \gG\to H,$$
where $U(\frak g)$ is the $\bk \gG$-module bialgebra induced by the conjugation action of $\gG$ on $\frak g$. We denote the extended action of $\gG$ on $U(\frak g)$ by $\rightharpoonup$.

By Corollary~\mref{coro:mm-difc}, we have $\Phi D_{\frak g}=D\Phi|_{U(\g)}$. Also, it is clear that $\Phi D_\gG=D \Phi|_{\bk \gG}$. Then
\begin{align*}
(D_{\frak g}\con\id)(g\rightharpoonup u)
&=\Phi^{-1}(D\con\id)\Phi(g\rightharpoonup u)\\
&=\Phi^{-1}(D\con\id)\big(\Phi(g)\Phi(u)\Phi(g)^{-1}\big)\\
&=\Phi^{-1}\big(\Phi(D_\gG\con\id)g \Phi(D_{\frak g}\con\id)u (\Phi(D_\gG\con\id)g)^{-1}\big)\\
&=\Phi^{-1}\big(\Phi((D_\gG\con\id)g\rightharpoonup (D_{\frak g}\con\id)u)\big)\\
&=(D_\gG\con\id)g\rightharpoonup (D_{\frak g}\con\id)u,
\end{align*}
for any $u\in U(\frak g)$ and $g\in \gG$.   Here we identify $U(\frak g)$ and $\bk \gG$ as subalgebras of $U(\frak g)\#\bk \gG$.
Hence, Eq.~\meqref{eq:diff-mod-bialg} holds, and $(U(\frak g),D_{\frak g})$ is a \difc $(\bk \gG,D_\gG)$-module bialgebra.

Therefore, we have the smash product \difchopf $(U(\frak g)\#\bk \gG,D_\#)$, where
$$D_\#(u\#g):=D_{\frak g}(u_1)u_2(D_\gG(g)\rightharpoonup S_{\frak g}(u_3))\# D_\gG(g),\quad\forall u\in U(\frak g),\, g\in \gG.$$
Now it remains to verify that $\Phi D_\#=D\Phi$. Indeed,
\begin{align*}
\Phi(D_\#(u\#g))&=\Phi(D_{\frak g}(u_1)u_2(D_\gG(g)\rightharpoonup S_{\frak g}(u_3)))\Phi(D_\gG(g))\\
&=\Phi(D_{\frak g}(u_1))\Phi(u_2)\Phi(D_\gG(g)\rightharpoonup S_{\frak g}(u_3))\Phi(D_\gG(g))\\
&=D(\Phi(u_1))\Phi(u_2)D(\Phi(g))S(\Phi(u_3))D(\Phi(g))^{-1}D(\Phi(g))\\
&=D(\Phi(u_1))\Phi(u_2)D(\Phi(g))S(\Phi(u_3))\\
&=D(\Phi(u)\Phi(g))=D(\Phi(u\#g)).
\end{align*}
Hence, $\Phi$ gives the desired \difchopf isomorphism.
\end{proof}

\section{Examples of difference operators on Hopf algebras}
\mlabel{sec:exam}
We end the paper with classifying difference operators on several Hopf algebras.
\begin{exam}
Consider the \difcops on the tensor Hopf algebra $(TV,\cdot,\Delta^{\co})$ with the coshuffle coproduct $\Delta^{\co}$. Since this is a connected cocommutative Hopf algebra and is the universal enveloping algebra of the free Lie algebra $\Lie(V)=P(TV)$, by Proposition \mref{prop:cross-uea} and Theorem \mref{CMM-Diff-hopf}, we see that \difcops on $(TV,\cdot,\Delta^{\co})$ are  in one-to-one  correspondence with \difcops on the Lie algebra $\Lie(V)=P(TV)$.
By Corollary \ref{coro:lie-gp-conv} (also see ~\mcite{LGG}), a linear map $D:\Lie(V) \to \Lie(V)$ is a \difcop if and only if $\id +D$ is a Lie algebra endomorphism on $\Lie(V)$; while Lie algebra endomorphisms on $\Lie(V)$ are in one-to-one correspondence with $\Hom(V,\Lie(V))$ by the universal property of $\Lie(V)$.
Thus, we conclude that the set of \difcops on the tensor Hopf algebra $TV$ is in bijection with $\Hom(V,\Lie(V))$.
\end{exam}

Next we give \difcops on two basic low-dimensional noncommutative and non-cocommutative Hopf algebras as examples of \difchopfs.

\begin{exam}
We classify difference operators on Sweedler's 4-dimensional Hopf algebra
\[H_4=\bk\langle 1,g,x,gx\,|\, g^2=1,x^2=0,gx=-xg\rangle,\]
with its coalgebra structure and its antipode given by
\[\Delta(g)=g\otimes g,\quad \Delta(x)=x\otimes 1+g\otimes x,\quad \vep(g)=1,\quad \vep(x)=0,\quad S(g)=g,\quad S(x)=-gx.\]
Further $G(H_4)=\{1,g\}$ and $P(H_4)=0$.

Let $D:H_4\to H_4$ be a \difcop. Then it restricts to a \difcop $D$ on the group $G(H_4)=\{1,g\}$.
Thus there are two cases to consider.

(i) Suppose $D(g)=g$. Then as $D$ is a coalgebra homomorphism, we have
$$\Delta(D(x))=D(x)\otimes 1+ g\otimes D(x).$$
That is, $D(x)\in P_{1,g}(H_4)=\bk(1-g)\oplus\bk x$. Thus there exist $\alpha,\beta\in\bk$ such that $D(x)=\alpha(1-g)+\beta x$. Then Eq.~\meqref{eq:diff-Hopf} implies
\begin{align*}
&D(gx)=D(g)gD(x)g^{-1}= g^2(\alpha(1-g)+\beta x)g=\alpha(g-1)+\beta xg,\\
&D(xg)=D(x)D(g)+D(g)xD(g)+D(g)gD(g)(-gx)=(\alpha(1-g)+\beta x)g-2x.
\end{align*}
So $D(gx)\neq D(-xg)$, a contradiction. Therefore there is no \difcop in this case.

(ii) Suppose $D(g)=1$. Then
$$\Delta(D(x))=D(x)\otimes 1+ 1\otimes D(x).$$
Thus $D(x)$ is in $P(H_4)=0$, and hence $D(x)=0$. Then Eq.~\meqref{eq:diff-Hopf} implies that
\begin{align*}
&D(gx)=D(g)gD(x)g=0,\\
&D(xg)=D(x)D(g)+D(g)xD(g)+D(g)gD(g)(-gx)=x-g^2x=0.
\end{align*}
Thus $D$ defines a \difcop.

In summary, $H_4$ only has one \difcop $D=u\circ\vep$, namely $D(g)=1$ and $D(x)=0$.
\end{exam}

\begin{exam}
We finally determine bijective \difcops on the Kac-Paljutkin Hopf algebra $H_8$, the noncommutative and non-cocommutative semisimple
Hopf algebra of dimension $8$ which has been widely studied;~see e.g. \mcite{Ma,SV,Shi}.

A basis for $H_8$ is given by $\{1, x, y, xy, z, xz,yz, xyz\}$ with the relations
\vspace{-.2cm}
$$x^{2} =y^{2}= 1,\quad z^{2} = \frac{1}{2}(1+x+y-xy), \quad xy = yx,\quad zx=yz,\quad zy=xz.$$
The coalgebra structure and the antipode are defined by
\vspace{-.2cm}
$$
\Delta(x) = x \otimes x, \quad \Delta(y) = y \otimes y,
\quad \Delta(z) =\frac{1}{2} (1 \otimes 1 + 1 \otimes x+ y \otimes 1- y \otimes x )(z\otimes z),$$
$$
\varepsilon(x) = \varepsilon(y) = \varepsilon(z) = 1, \quad S(x) = x, \quad S(y) = y, \quad S(z) = z.
$$
In particular, $G(H_8)=\{1,x,y,xy\}$ and $C_4\coloneqq{\rm span}_\bk\{z, xz,yz, xyz\}$ is the unique simple subcoalgebra of $H_8$ of dimension $>1$.

\smallskip
Now suppose that $D:H_8\to H_8$ is a bijective \difcop, then $D|_{G(H_8)}$ is an abelian group automorphism, and $D(C_4)= C_4$, since $D(C_4)\neq 0$ as $\vep(D(z))=\vep(z)=1$.

First let $D$ be a \difcop such that $D(x)=x$ and $D(y)=y$.

As $D(z)\in C_4$, we can set $D(z)=pz$ with nonzero $p\in\bk G(H_8)$. For any $g,h\in G(H_8)$,
we obtain that $g\sigma(h)z^2=g\sigma(h)p^2z^2$ by the equality $D((gz)(hz))=D((gz)_1)\ad_{(gz)_2}D(hz)$, where $\sigma$ denotes the linear operator on $\bk G(H_8)$ defined by $\sigma(1)=1,\sigma(x)=y,\sigma(y)=x,\sigma(xy)=xy$.
Hence, we need $p^2z^2=z^2$, which implies $p^2=1$ with further computations.
There are 16 solutions of $p$ as listed below:
\vspace{-.1cm}
$$\pm 1,\,\pm x,\,\pm y,\,\pm xy,\,\pm\frac{1}{2}(1+x+y-xy),\,\pm\frac{1}{2}(1+x-y+xy),\, \pm\frac{1}{2}(1-x+y+xy),\,\pm\frac{1}{2}(-1+x+y+xy).$$
On the other hand, we require
\begin{align*}
\Delta D(z)&=(D\otimes D)\Delta(z)\\
&=\frac{1}{2} (D(z) \otimes D(z) + D(z) \otimes D(xz)+ D(yz) \otimes D(z)- D(yz) \otimes D(xz))\\
&=\frac{1}{2} (1 \otimes 1 + 1 \otimes y+ x \otimes 1- x \otimes y)(D(z)\otimes D(z)).
\end{align*}
Then the only choices for $D(z)$ are
\vspace{-.1cm}
$$\frac{1}{2}(1+x+y-xy)z,\quad \frac{1}{2}(1+x-y+xy)z,\quad \frac{1}{2}(1-x+y+xy)z,\quad \frac{1}{2}(-1+x+y+xy)z.$$
For every other standard basis element
$gz\in C_4$ with $g\in G(H_8)$, we correspondingly have
$$\Delta D(gz)=\Delta(D(z)g)=((D\otimes D)\Delta(z))(g\otimes g)=(D\otimes D)((g\otimes g)\Delta(z))=(D\otimes D)\Delta(gz).$$
So $D$ is clearly a coalgebra homomorphism.

\smallskip
Now it is straightforward to check that the bijective \difcops $D$ on $H_8$ such that $D(x)=x$ and $D(y)=y$ are  given by
\vspace{-.1cm}
\begin{enumerate}[(1)]
\item\smallskip
$D_1(1,x,y,xy,z,xz,yz,xyz)$

$=(1,x,y,xy,\tfrac{1}{2}(1+x+y-xy)z,\tfrac{1}{2}(1-x+y+xy)z
,\tfrac{1}{2}(1+x-y+xy)z,\tfrac{1}{2}(-1+x+y+xy)z)$,

\item\smallskip
$D_2(1,x,y,xy,z,xz,yz,xyz)$

$=(1,x,y,xy,\tfrac{1}{2}(1+x-y+xy)z,\tfrac{1}{2}(-1+x+y+xy)z
,\tfrac{1}{2}(1+x+y-xy)z,\tfrac{1}{2}(1-x+y+xy)z)$,

\item\smallskip
$D_3(1,x,y,xy,z,xz,yz,xyz)$

$=(1,x,y,xy,\tfrac{1}{2}(1-x+y+xy)z,\tfrac{1}{2}(1+x+y-xy)z
,\tfrac{1}{2}(-1+x+y+xy)z,\tfrac{1}{2}(1+x-y+xy)z)$,

\item\smallskip
$D_4(1,x,y,xy,z,xz,yz,xyz)$

$=(1,x,y,xy,\tfrac{1}{2}(-1+x+y+xy)z,\tfrac{1}{2}(1+x-y+xy)z
,\tfrac{1}{2}(1-x+y+xy)z,\tfrac{1}{2}(1+x+y-xy)z)$,
\end{enumerate}

Next let $D$ be a \difcop such that $D(x)=y$ and $D(y)=x$. By a similar analysis, we find that all the bijective \difcops $D$ on $H_8$ with this property are given by
\begin{enumerate}[(1)]
\item\smallskip
$D_5(1,x,y,xy,z,xz,yz,xyz)=(1,y,x,xy,z,xz,yz,xyz)$,

\item\smallskip
$D_6(1,x,y,xy,z,xz,yz,xyz)=(1,y,x,xy,xz,z,xyz,yz)$,

\item\smallskip
$D_7(1,x,y,xy,z,xz,yz,xyz)=(1,y,x,xy,yz,xyz,z,xz)$,

\item\smallskip
$D_8(1,x,y,xy,z,xz,yz,xyz)=(1,y,x,xy,xyz,yz,xz,z)$.
\end{enumerate}

On the other hand, there is no bijective \difcop $D$ on $H_8$ such that $D(x)=xy$ or $D(y)=xy$.
Hence, we have determined all bijective \difc structures on $H_8$.
\end{exam}
\vspace{-.1cm}

\noindent
{\bf Acknowledgements. } This research is supported by NSFC (Grant No. 11922110, 12071094, 12001228).

\vspace{-.3cm}

\bibliographystyle{amsplain}

\end{document}